\documentclass[final]{siamart250211}

\usepackage{amsmath,amsfonts,amssymb}
\usepackage{graphicx}
\usepackage{nicefrac}
\usepackage{MnSymbol}
\usepackage{comment}
\usepackage{pgfplots}
\usepackage{xfrac}
\usepackage{xspace}
\usepackage[normalem]{ulem}
\usepackage{pifont}

\usepackage{tikz}
\usetikzlibrary{external}
\tikzsetexternalprefix{tikz_figures/}
\usepackage{subcaption}
\usepackage{pgfplots}

\usepackage{accents}

\usepackage{multirow}

\newcommand{\ie}{\textit{i.\,e\mbox{.}}\xspace}
\numberwithin{equation}{section}

\newcommand{\IR}{\mathbb{R}}

\newtheorem{remark}{Remark}

\makeatletter
\@ifpackageloaded{stix}{%
}{%
	\DeclareFontEncoding{LS2}{}{\noaccents@}
	\DeclareFontSubstitution{LS2}{stix}{m}{n}
	\DeclareSymbolFont{stix@largesymbols}{LS2}{stixex}{m}{n}
	\SetSymbolFont{stix@largesymbols}{bold}{LS2}{stixex}{b}{n}
	\DeclareMathDelimiter{\lBrace}{\mathopen} {stix@largesymbols}{"E8}%
	{stix@largesymbols}{"0E}
	\DeclareMathDelimiter{\rBrace}{\mathclose}{stix@largesymbols}{"E9}%
	{stix@largesymbols}{"0F}
} %
\makeatother
\usepackage{stmaryrd}
\usepackage{mathtools}
\newcommand{\bvec}{\boldsymbol}

\DeclareMathOperator{\Grad}{\bvec{\nabla}}

\DeclareMathOperator{\Div}{div}

\newcommand{\OmG}{{\Omega\setminus\Gamma}}
\newcommand{\mean}[1]{\left\lBrace #1 \right\rBrace}
\newcommand{\jump}[1]{\big\llbracket #1 \big\rrbracket}
\newcommand{\jumpt}[1]{\llbracket #1 \rrbracket}
\newcommand{\ess}{\operatorname{ess}}
\newcommand{\tr}{\operatorname{tr}}

\newcommand{\vd}{\bvec{d}}
\newcommand{\vf}{\bvec{f}}
\newcommand{\vn}{\bvec{n}}
\newcommand{\vq}{{\bvec{q}}}
\newcommand{\vw}{{\bvec{w}}}
\newcommand{\vx}{\bvec{x}}

\newcommand{\zerobf}{\boldsymbol{0}}
\newcommand{\blf}[1]{\mathsf{#1}}
\newcommand{\cF}{\mathcal{F}}
\newcommand{\cM}{\mathcal{M}}
\newcommand{\cQ}{\mathcal{Q}}
\newcommand{\cT}{\mathcal{T}}
\newcommand{\cW}{\mathcal{W}}
\newcommand{\pitr}{\operatorname{\Pi tr}}

\usepackage{mathtools}

\makeatletter
\def\@tvsp{\mathchoice{{}\mkern-4.5mu}{{}\mkern-4.5mu}{{}\mkern-2.5mu}{}}
\def\ltrivert{\left|\@tvsp\left|\@tvsp\left|}
\def\rtrivert{\right|\@tvsp\right|\@tvsp\right|}
\makeatother
\newcommand\tnorm[1]{\ltrivert#1\rtrivert}

\newcommand\pnorm[1]{\tnorm{#1}_\tau}

\usepackage{placeins}

\usepackage{hyperref}
\hypersetup{
	pdftitle={On a Hybrid Mixed Domain Decomposition Method},
	pdfauthor={Kersten Schmidt, Timon Seibel, Sebastian Schöps},
	pdfsubject={Finite element method, domain decomposition, numerical analysis, mixed formulation, hybrid method},
	pdfkeywords={HMDD, domain decomposition, finite element method, numerical analysis, mixed formulation, hybrid method},
	pdflang={en-US},
}

\begin{document}
\title{On a hybrid mixed domain decomposition method}

\author{
Kersten Schmidt\thanks{Department of Mathematics, Technical University of Darmstadt, Dolivostr. 15, 64293 Darmstadt, Germany} \and
Timon Seibel\thanks{Computational Electromagnetics Group, Technical University of Darmstadt, Schloßgartenstr. 8, 64289 Darmstadt, Germany} \and
Sebastian Schöps$^\dagger$
}

\maketitle

\begin{abstract}
   We present a domain decomposition formulation based on hybridization which is inspired by hybridized discontinuous Galerkin (HDG) methods, that enhance mixed domain decomposition methods by incorporating stabilization terms.
   Unlike discontinuous Galerkin methods, our analysis of the proposed finite element method is based on a corresponding consistent variational formulation and a perturbed Galerkin method. In the variational formulation the divergence appears not only within subdomains, but also as an $L^2$-surface quantity on the interfaces. Furthermore, the traces of the finite element functions on the interfaces are replaced by $L^2$-distributions. The well-posedness of the perturbed Galerkin method is shown for an appropriate choice of subspaces, in a manner similar to that of the variational formulation. %
   For the finite element method we use Raviart-Thomas elements for the dual variable and piecewise polynomials for the primal and hybrid variables, respectively. %
   We perform an analysis of the discretization error which is explicit in the stabilization parameter $\tau$.
   Numerical experiments for piecewise smooth solutions using finite element spaces of order~$q$ on curved quadrilateral meshes confirm the predicted convergence rate of $q+1$ for small values of $\tau$.
   In the error analysis we observe the discretization error to be uniformly bounded in $\tau$. %
   Even for large $\tau$ values the observed convergence rates for the primal and for the hybrid variables are $q+1$. For the dual variable the convergence rate depends on the stabilization parameter and the mesh-width, with an asymptotic rate of $q+\nicefrac12$.
\end{abstract}

\begin{keywords}
	domain decomposition, finite element method, numerical analysis, mixed formulation, hybrid method
\end{keywords}

\begin{AMS}
  65N30, 65N55
\end{AMS}

\pagestyle{myheadings}
\thispagestyle{plain}
\markboth{}{On a hybrid mixed domain decomposition method}

\section{Introduction}
The aim of this paper is to introduce and analyze a finite element (FE) formulation on a domain partitioned into subdomains, where the coupling of the subdomains is inspired by \emph{Hybridized Discontinuous Galerkin} (HDG) methods. Throughout this paper, we will consider as toy model the Poisson equation with homogeneous Dirichlet boundary conditions:
Let $f\in L^2(\Omega)$. Find $u$ such that
\begin{subequations}
	\begin{align}
		-\Div \kappa \Grad u &= f, \text{ in } \Omega,\label{Int:eq:PoisProbA}\\
		u &= 0, \text{ on } \partial\Omega,\label{Int:eq:PoisProbB}
	\end{align}\label{Int:eq:PoisProb}
\end{subequations}
\noindent%
where $\kappa$ is a positive function on $\Omega\subset\mathbb{R}^d,\, d=2,\,3$. %
In particular, we will focus on the mixed formulation of the Poisson problem \eqref{Int:eq:PoisProb},
where a flux variable $\bvec{q} = \kappa\Grad u$ is introduced as an additional unknown. The classical finite element methods for the mixed Poisson problem based on Raviart-Thomas or Brezzi-Douglas-Marini spaces are looking for an approximation $\bvec{q}_h$ of the flux variable that is $H(\Div,\Omega)$-conforming, \ie, the normal component of $\bvec{q}_h$ is continuous across element boundaries. As pointed out by Cockburn and Gopalakrishnan \cite{Cockburn.Gopalakrishnan:2004}, a disadvantage of these methods is that their system matrix is not positive definite. Further, trying to eliminate the degrees of freedom (dofs) of $\bvec{q}_h$ in order to obtain a Schur-complement w.r.t. $u_h$ requires the calculation of inverses of non-sparse matrices, which is numerically challenging. To overcome these difficulties, Cockburn and Gopalakrishnan consider an additional, so-called \emph{hybrid}, variable $\lambda_h$ that approximates the numerical trace of $u_h$ on the edges between elements \cite{Cockburn.Gopalakrishnan:2004}. This approach was inspired by Fraejis de Veubeke who first introduced such a $\lambda_h$ as a Lagrange multiplier in 1965 \cite{FraejisDeVeubeke:1965}. It is then possible to rearrange the resulting matrix equation for $\lambda_h$ in order to obtain a Schur-complement system whose system matrix is symmetric and positive definite. One can show that the Schur-complement system has significantly less d.o.f.s than the original system and, further, it can be concluded that the d.o.f.s related to $\lambda_h$ are the only globally coupled ones \cite{Cockburn.Gopalakrishnan:2004, Cockburn.Gopalakrishnan.Lazarov:2009}. %
In \cite{Cockburn.Gopalakrishnan.Lazarov:2009}, Cockburn, Gopalakrishnan and Lazarov present a \emph{unified framework} for HDG methods, in which an approximation to $\bvec{q}_h\cdot\bvec{n}$ is introduced by \[\hat{\bvec{q}}_h^\pm\cdot\bvec{n}^\pm \coloneq \bvec{q}^\pm\cdot\bvec{n}^\pm + \tau^\pm(u_h-\hat{u}_h^\pm),\]
where $\hat{u}_h^\pm$ is the numerical trace and $\tau^\pm$ is a non-negative function on the edges, which is referred to as \emph{stabilization function}. Note, that the superscript ${}^\pm$ added to a function shall indicate that it is a double-valued function on the element edges. A common choice for $\tau^\pm$, which is the one we use in this paper, is the \emph{Lehrenfeld-Schöberl} stabilization function \cite{Cockburn:2016} \[\tau^\pm(u_h-\hat{u}_h^\pm) \coloneq \pm\tau (u_h - \hat{u}_h^\pm),\] where a multiplicative constant $\tau\geq0$ is used. It was shown \cite{Cockburn:2016, Cockburn.Gopalakrishnan.Lazarov:2009} that for $0<\tau<\infty$ non-conforming meshes are an admissible choice, which do not require any further effort. On conforming meshes $\tau$ can be chosen as zero, leading to the \emph{hybrid mixed methods} presented in \cite{Cockburn.Gopalakrishnan:2004}, or formally as $\tau=\infty$ resulting in hybridizable continuous Galerkin methods. %
Mixed methods have been used for discretization on subdomains with Lagrange multipliers on their interfaces, see e.g.~\cite{Arnold.Brezzi:1985, Glowinski.Wheeler:1988, Brezzi.Fortin:1991} for matching grids, which are called \emph{mortar mixed method} when used as a mortaring technique~\cite{Arbogast.Cowsar.Wheeler.Yotov:2000}.
In~\cite{Arbogast.Pencheva.Wheeler.Yotov:2007} a hybrid variable~$\lambda$, that is in $H^{\nicefrac12}(\Gamma_{i,j})$ on each subdomain interface $\Gamma_{i,j}$, was introduced already at the continuous level. Further, the authors considered the mesh widths in the patches and on the interface separately in order to obtain improved error estimates. %
The present work studies a \emph{hybrid mixed domain decomposition} (HMDD) approach that extends the mixed discretization on subdomains by including Lehrenfeld–Schöberl stabilization terms, as known from HDG methods, with a stabilization parameter $\tau\geq0$.
We introduce and analyze a high-order finite element discretization of HMDD on curved cells for the generalized Poisson problem~\eqref{Int:eq:PoisProb} as an example of the application to elliptic PDEs. In difference to DG methods our analysis of the proposed finite element method is based on a corresponding variational formulation, where the divergence has a distributional contribution on the interface $\Gamma$ and the traces of the FE functions on $\Gamma$ are replaced by $L^2(\Gamma)$ distributions. We show inf-sup stability of the variational formulation, where the proof transfers similarly to a class of perturbed Galerkin methods and so to high order FE discretization using Raviart-Thomas elements and piecewise polynomials for the case of matching grids, which allows for a rather classical FE error analysis with a consistency error contribution. Our new analysis of the FE discretization of the HMDD method with stabilization terms is applied for conforming meshes where well-posedness is shown even if the stabilization parameter $\tau = 0$. We expect that our proof technique can serve as a basis for HMDD methods on non-matching grids similar to HDG methods.
The rest of the paper is organized as follows: the HMDD method, the main theorems of well-posedness and the estimate on the discretization error are stated in Sec.~\ref{sec:main}. The corresponding variational formulation and function spaces are introduced in Sec.~\ref{sec:HDD}, where the inf-sup conditions and so the well-posedness are proven. In Sec.~\ref{sec:Galerkin}, a class of perturbed Galerkin methods is introduced, where their inf-sup stability is shown and their discretization error is estimated by the sum of the best-approximation error and a consistency error. %
The high-order finite element method with projected traces of piecewise polynomials on the interfaces is discussed and analyzed in Sec.~\ref{sec:FEM}. %
Finally, in Sec.~\ref{sec:NumExs} numerical experiments illustrate the convergence properties of the HMDD method, also for different values of the parameter $\tau$.

\section{Hybrid domain decomposition formulation}
\label{sec:main}
In this section, we describe the derivation of a hybridized domain decomposition formulation and prove in the upcoming section that the formulation is well-posed. 

\subsection{Domain decomposition}
The aim of this work is to derive a domain decomposition method inspired by Hybridized Discontinuous Galerkin methods.
For this, we introduce a decomposition of the domain $\Omega$ into $n$ non-overlapping patches $\Omega_i$, $i=1,\ldots,n$, \ie, it holds that $\overline{\Omega}= \bigcup_i \overline{\Omega}_i$ and $\Omega_i \cap \Omega_j = \varnothing$ for $i\neq j$. %
We denote by $\Gamma = \bigcup_{i<j} \overline{\Omega}_i \cap \overline{\Omega}_j$ the skeleton, which is the union of all interfaces between patches. For the union of the patches we use the symbol $\Omega \setminus \Gamma$.
Almost everywhere on $\Gamma$ we define a unit normal vector field $\bvec{n}$ whose orientation is arbitrary but fixed (see Fig.~\ref{HDD:fig:DomDec} for an illustration). Further, we assume the material function $\kappa$ to be positive and bounded a.e., \ie, $0 < \ess\inf_{\bvec{x} \in \Omega} \kappa(\bvec{x}) \leq \ess\sup_{\bvec{x} \in \Omega} \kappa(\bvec{x}) < \infty$.

\begin{figure}[tb]
	\begin{center}
		\includegraphics{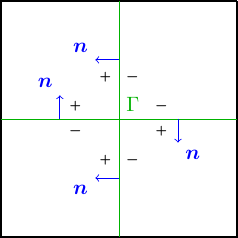}
	\end{center}
	\caption{Exemplary depiction of $\Omega$ decomposed into four disjoint subdomains. The green lines represent the interface $\Gamma$, while the blue arrows are unit normal vectors whose directions are arbitrary but fixed. Further, the $+$ and $-$ illustrate which side of each part of the interface is referred to by $\tr^+ u$ and $\tr^- u$ or $u^+$  and $u^-$, respectively.}
	\label{HDD:fig:DomDec}
\end{figure}%
If the solution $u$ of~\eqref{Int:eq:PoisProb} is smooth enough then
$(\bvec{q},u,\mu) = (\kappa \nabla u, u, \tfrac12(\tr^+u + \tr^-u))$ fulfill a strong hybrid mixed formulation for $\tau\geq0$:
\begin{subequations}
	\begin{alignat}{2}
		\tfrac{1}{\kappa}\bvec{q} - \Grad u &= 0, &&\quad\textrm{in } \OmG,\label{HDD:eq:MPHDDA}\\
		-\Div \bvec{q} &= f, &&\quad\textrm{in } \OmG,\label{HDD:eq:MPHDDB}\\
		\jump{\bvec{q}\cdot\bvec{n}} + \sum\nolimits_\pm \tau(\tr^\pm u - \mu) &= 0, &&\quad\textrm{on } \Gamma,\label{HDD:eq:MPHDDC}\\
		\tr^\pm u - \mu &= 0, &&\quad\textrm{on } \Gamma,\label{HDD:eq:MPHDDD}\\
		u &=0, &&\quad\textrm{on } \partial\Omega.\label{HDD:eq:MPHDDE}
	\end{alignat}
	\label{HDD:eq:MPHDD}
\end{subequations}
where $\tr^\pm u$ are the one-sided traces of $u$ on $\Gamma$ and $\jump{\bvec{q}\cdot\bvec{n}}$ is the jump of the normal traces of $\vq$ on $\Gamma$.
Multiplying~\eqref{HDD:eq:MPHDDA}--\eqref{HDD:eq:MPHDDC} with smooth enough functions~$\bvec{w}$ and $v$ in $\OmG$ and $\nu$ on $\Gamma$, integrating over $\OmG$ or $\Gamma$, respectively, using integration by parts in the first equations and inserting~\eqref{HDD:eq:MPHDDD}--\eqref{HDD:eq:MPHDDE} we find the weak formulation
\begin{subequations}\label{HDD:eq:HDDEqns}
\begin{align}
	\int_{\OmG} \tfrac{1}{\kappa}\bvec{q}\cdot\bvec{w} + u \ \Div\bvec{w} \ \mathrm{d}\bvec{x} + \int_{\Gamma} \mu \jump{\bvec{w}\cdot\bvec{n}} \ \mathrm{d} \sigma &= 0, \label{HDD:eq:HDD1} \\
	-\int_{\OmG} \Div\bvec{q} \, v \ \mathrm{d}\bvec{x} + \sum_\pm \int_{\Gamma} \tau (\tr^\pm u - \mu) \tr^\pm v \ \mathrm{d}\sigma &= \int_{\OmG} fv \ \mathrm{d}\bvec{x},\label{HDD:eq:HDD2} \\
	-\int_{\Gamma} \jump{\bvec{q}\cdot \bvec{n}} \nu \ \mathrm{d} \sigma - \sum_\pm \int_{\Gamma} \tau (\tr^\pm u - \mu) \nu \ \mathrm{d} \sigma &= 0.\label{HDD:eq:HDD3}
\end{align}
\end{subequations}
Here, $u$ and $\bvec{q}$ are seeked on the union of the patches $\OmG$ only and their continuity on $\Gamma$ is weakly imposed. More precisely, for well-chosen test function spaces~\eqref{HDD:eq:HDD1} leads to $\mu = \frac12 (\tr^+ u + \tr^-u)$ and $\tr^+u - \tr^-u = 0$ on $\Gamma$, and~\eqref{HDD:eq:HDD3} implies $\jump{\bvec{q}\cdot \bvec{n}} = 0$ for all $\tau \geq 0$. Now, appropriate function spaces shall be introduced such that a corresponding variational formulation is well-posed. %
\subsection{Finite element discretization on curved quadrilateral meshes}
For sake of a simple presentation we consider finite element discretizations on a conforming quadrilateral mesh $\mathcal{T}_h$ for $\Omega \subset \IR^2$. The results may be easily transferred to other conforming meshes in two or three dimensions.
First, we assume that each cell $K \in \mathcal{T}_h$ is located only in one patch. %
In this way the mesh splits naturally into meshes $\mathcal{T}_{h,i}$ of the patches and %
the edges of $\mathcal{T}_h$ on $\Gamma$ build an own mesh $\mathcal{F}_{\Gamma,h}$. The edges of $\mathcal{T}_h$
in the interior of the patches build the set $\mathcal{F}_{\Omega \setminus \Gamma,h}$.
Here, we denote by $h$ the mesh width, \ie, $h = \max_{K \in \mathcal{T}_h} \operatorname{diam}(K)$.
Each quadrilateral $K \in \mathcal{T}_h$ may be curved and is obtained as the image of a mapping $\Phi_K$ of a reference quad $\widehat{K} = [0,1]^2$ which, further, induces mappings $\Phi_F: [0,1] \to F$ for each edge $F \in \mathcal{F}_h$.
We denote by $\nabla\Phi_K$ the Jacobian matrix of $\Phi_K$.
Now, we define the finite element spaces of order~$q \geq 0$:
\begin{subequations}
\label{HDD:eq:FEspaces}
\begin{align}
   W^q(\mathcal{T}_h) &:=  \{ \bvec{w}_h : |\operatorname{det} \nabla \Phi_K| (\nabla \Phi_K)^{-1}(\bvec{w}_h \circ \Phi_K) \in \mathcal{RT}^q \,\forall K \in \mathcal{T}_h,\\
   &\hspace{4em}\nonumber
   \jump{\bvec{w}_h \cdot \bvec{n}}_F = 0 \,\forall F \in \mathcal{F}_{\Omega \setminus \Gamma,h} \}, \\
   Q^q(\mathcal{T}_h) &:=  \{        v_h : |\det \nabla \Phi_K| (v_h \circ \Phi_K) \in \mathcal{Q}^q \,\forall K \in \mathcal{T}_h \}, \\
   M^q(\mathcal{F}_{\Gamma,h}) &:=  \{  \nu_h : |\Phi_F'|(\nu_h\circ \Phi_F) \in \mathcal{P}^q \,\forall F \in \mathcal{F}_{\Gamma,h} \},
\end{align}
\end{subequations}
where $\mathcal{P}^q$ is the polynomial space of order~$q$, $\mathcal{RT}^q = (\mathcal{P}^{q} \otimes \mathcal{P}^{q+1}) \times (\mathcal{P}^{q+1} \otimes \mathcal{P}^q)$ is the Raviart-Thomas space of order~$q$, %
$\mathcal{Q}^q = \mathcal{P}^q \otimes \mathcal{P}^q$ is the tensor-product space of polynomials of order~$q$ in each local direction and $\jump{\bvec{w}_h \cdot \bvec{n}}_F$ extends the definition of the normal jump to edges of the mesh.
In this way, $\Div W^q(\mathcal{T}_h) \subseteq Q^q(\mathcal{T}_h)$ and the divergence maps the subspace of $W^q(\mathcal{T}_h)$ with vanishing normal trace on $\Gamma$ onto $Q^q(\mathcal{T}_h)$. Moreover, $\jump{W^q(\mathcal{T}_h)\cdot \bvec{n}} = M^q(\mathcal{F}_{\Gamma,h})$. See~\cite{Arnold.Boffi.Falk:2005} for a definition of $W^q(\mathcal{T}_h)$. %
Now, we can formulate the finite element formulation: \\%
Find $(\bvec{q}_h,u_h,\mu_h)\in W^q(\mathcal{T}_h) \times Q^q(\mathcal{T}_h) \times M^q(\mathcal{F}_{\Gamma,h})$ such that
\begin{subequations}
	\label{HDD:eq:dHDD:FEM:proj}
		\begin{align}
			\int_{\OmG} \tfrac{1}{\kappa}\bvec{q}_h\cdot\bvec{w}_h + u_h \ \Div\bvec{w}_h \ \mathrm{d}\bvec{x} +
			\int_{\Gamma} \mu_h \jump{\bvec{w}_h\cdot\bvec{n}} \ \mathrm{d} \sigma &=0
			\label{HDD:eq:dHDD:FEM:proj:1}\\
			-\int_{\OmG} \Div\bvec{q}_h v_h \ \mathrm{d}\bvec{x} + \sum\nolimits_\pm \int_{\Gamma} \tau (\pitr^\pm u_h - \mu_h) \pitr^\pm v_h \ \mathrm{d}\sigma &= \int_{\OmG} fv_h \ \mathrm{d}\bvec{x},
			\label{HDD:eq:dHDD:FEM:proj:2}\\
			-\int_{\Gamma} \jump{\bvec{q}_h\cdot \bvec{n}} \nu_h \ \mathrm{d} \sigma - \sum_\pm \int_{\Gamma} \tau (\pitr^\pm u_h - \mu_h) \nu_h \ \mathrm{d} \sigma &= 0,
			\label{HDD:eq:dHDD:FEM:proj:3}
		\end{align}
	for all $(\bvec{w}_h,v_h,\nu_h)\in W^q(\mathcal{T}_h) \times Q^q(\mathcal{T}_h) \times M^q(\mathcal{F}_{\Gamma,h})$.
\end{subequations}
Here, $\pitr^\pm$ is a projection of the one-sided traces on $\Gamma$ onto $M^q(\cF_{\Gamma,h})$ that will be described later.
We consider the projection as the traces $\tr^\pm u_h, \tr^\pm v_h$ for functions $u_h, v_h \in Q^q(\cT_h)$. These are, in general, not polynomials after transformation onto the reference interval $[0,1]$, in difference to functions $\mu_h, \nu_h \in M^q(\cF_{\Gamma,h})$. %
This originates in the fact that $|\det \nabla \Phi_K|$ evaluated on one edge of the reference element $[0,1]^2$ is in general not a polynomial.
Note, that the evaluation of the integrals with a quadrature rule of exactness $2q$ on each edge of $\cF_{\Gamma,h}$ is equivalent to project the traces $\tr^\pm u_h, \tr^\pm v_h$ onto the space $M^q(\cF_{\Gamma,h})$.
\begin{theorem}[Well-posedness]
  \label{lem:HDD:FEM:Wellposedness}
  For any $f \in L^2(\OmG)$ there exist unique solutions $(\vq_h, u_h, \mu_h) \in W^q(\mathcal{T}_h) \times Q^q(\mathcal{T}_h) \times M^q(\mathcal{F}_{\Gamma,h})$ of~\eqref{HDD:eq:dHDD:FEM:proj} where it holds with a constant $C$ independent of $\tau$ that
  \begin{subequations}
  \label{eq:HDD:FEM:Wellposedness}
  \begin{align}
     \big\|\vq_h\big\|_{H(\Div,\Omega)}
     + \big\|u_h\big\|_{L^2(\OmG)}
     + \big\|\mu_h\big \|_{L^2(\Gamma)}
     &\leq C \big\|f \big\|_{L^2(\OmG)}, \\
	\big\|\jump{\vq_h\cdot\bvec{n}}\big\|_{L^2(\Gamma)} &\leq C \sqrt{\tau} \|f\|_{L^2(\Omega)},\label{eq:HDD:FEM:NormEstqn}\\
	  \sqrt{\tau}
	  \left(
	  \big\|\jump{\pitr u_h}\big\|_{L^2(\Gamma)} + \big\| \mean{\pitr u_h} - \mu_h\big\|_{L^2(\Gamma)}
	  \right)
	  &\leq C \|f\|_{L^2(\Omega)}\label{eq:HDD:FEM:NormEstjumpu} \\
	  \sum_\pm \tfrac{1}{\sqrt{1+\tau}} \big\| \sqrt{\tau} \pitr^\pm u_h \big\|_{L^2(\Gamma)} &\leq C \big\|f \big\|_{L^2(\OmG)},
  \end{align}
  \end{subequations}
\end{theorem}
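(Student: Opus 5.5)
The discrete system \eqref{HDD:eq:dHDD:FEM:proj} is square and finite dimensional, so I will combine an energy identity with a saddle-point (inf-sup) argument. The energy identity gives the $\tau$-weighted bounds; the inf-sup argument gives the $\tau$-independent bounds. Existence then follows from uniqueness.

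\textbf{Energy identity.} Test \eqref{HDD:eq:dHDD:FEM:proj:1} with $\vw_h=\vq_h$, \eqref{HDD:eq:dHDD:FEM:proj:2} with $v_h=u_h$ and \eqref{HDD:eq:dHDD:FEM:proj:3} with $\nu_h=\mu_h$, and add the three equations. The terms with $u_h\Div\vq_h$ cancel, and so do the terms with $\mu_h\jump{\vq_h\cdot\vn}$. What remains is
\begin{equation*}
\big\|\kappa^{-1/2}\vq_h\big\|^2_{L^2(\OmG)}+\tau\sum\nolimits_\pm\big\|\pitr^\pm u_h-\mu_h\big\|^2_{L^2(\Gamma)}=\int_{\OmG} f u_h \,\mathrm{d}\vx .
\end{equation*}
Here I use that $\pitr^\pm$ is an $L^2$-projection onto $M^q(\cF_{\Gamma,h})$ and that $\mu_h$ already lies in that space. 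If $f=0$, this gives $\vq_h=0$ and $\pitr^\pm u_h=\mu_h$. Then \eqref{HDD:eq:dHDD:FEM:proj:1} reads $\int_{\OmG} u_h\Div\vw_h\,\mathrm{d}\vx+\int_\Gamma\mu_h\jump{\vw_h\cdot\vn}\,\mathrm{d}\sigma=0$ for all $\vw_h$. Using first $\vw_h$ with vanishing normal trace on $\Gamma$, whose divergence is onto $Q^q(\cT_h)$, gives $u_h=0$. Then surjectivity of $\jump{W^q(\cT_h)\cdot\vn}$ onto $M^q(\cF_{\Gamma,h})$ gives $\mu_h=0$. So the system is uniquely solvable.

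\textbf{$\tau$-independent bounds.} The key step is a discrete inf-sup bound for the pair $(u_h,\mu_h)$ that does not depend on $\tau$:
\begin{equation*}
\|u_h\|_{L^2(\OmG)}+\|\mu_h\|_{L^2(\Gamma)}\le C\sup_{\vw_h}\frac{\int_{\OmG} u_h\Div\vw_h\,\mathrm{d}\vx+\int_\Gamma\mu_h\jump{\vw_h\cdot\vn}\,\mathrm{d}\sigma}{\|\vw_h\|_{H(\Div,\OmG)}}.
\end{equation*}
I would build $\vw_h$ in two stages.
\begin{itemize}
\item First, a discrete lifting $\vw_h^1$ with $\jump{\vw_h^1\cdot\vn}=\mu_h$ and $\|\vw_h^1\|_{H(\Div)}\le C\|\mu_h\|_{L^2(\Gamma)}$. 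This uses a Raviart--Thomas lifting of $L^2$ edge data; it is bounded with $\log$-free constants because the polynomial degree is fixed and elementwise scaling is available.
\item Second, $\vw_h^2$ with vanishing normal trace on $\Gamma$ and $\Div\vw_h^2=u_h$ minus a correction accounting for $\Div\vw_h^1$, obtained from the patchwise discrete divergence surjectivity.
\end{itemize}
By \eqref{HDD:eq:dHDD:FEM:proj:1}, the numerator equals $-\int_{\OmG}\kappa^{-1}\vq_h\cdot\vw_h\,\mathrm{d}\vx$, so the inf-sup bound gives $\|u_h\|+\|\mu_h\|\le C\|\vq_h\|_{L^2}$.

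Inserting this into the energy identity gives $\|\vq_h\|^2\lesssim\|f\|\,\|u_h\|\lesssim\|f\|\,\|\vq_h\|$. Hence $\|\vq_h\|_{L^2}$, $\|u_h\|$ and $\|\mu_h\|$ are all bounded by $C\|f\|$, and so is $\sqrt\tau\sum_\pm\|\pitr^\pm u_h-\mu_h\|$. This last quantity controls both $\sqrt\tau\|\jump{\pitr u_h}\|$ and $\sqrt\tau\|\mean{\pitr u_h}-\mu_h\|$, which is \eqref{eq:HDD:FEM:NormEstjumpu}.

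\textbf{Remaining estimates.}
\begin{itemize}
\item \emph{Divergence.} Take $v_h=\Div\vq_h$ in \eqref{HDD:eq:dHDD:FEM:proj:2}. The stabilization term is bounded by $\tau\|\pitr^\pm u_h-\mu_h\|\,\|\pitr^\pm\Div\vq_h\|$. This is not $\tau$-uniform as it stands. Instead I would test \eqref{HDD:eq:dHDD:FEM:proj:2} with $v_h$ whose traces $\pitr^\pm v_h$ vanish. Such $v_h$ are not dense, so this is the obstacle.
\item \emph{Alternative for the divergence.} Use \eqref{HDD:eq:dHDD:FEM:proj:3}, which gives exactly $\jump{\vq_h\cdot\vn}=-\tau\sum_\pm(\pitr^\pm u_h-\mu_h)$ in $M^q$. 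Summing \eqref{HDD:eq:dHDD:FEM:proj:2} and \eqref{HDD:eq:dHDD:FEM:proj:3} with $\nu_h=\pitr^\pm v_h$-type choices should show that $-\Div\vq_h$ equals the $L^2$-projection of $f$ plus an element-boundary correction.
\item \emph{Normal jump.} From this identity, $\|\jump{\vq_h\cdot\vn}\|\le\sqrt\tau\cdot\sqrt\tau\sum_\pm\|\pitr^\pm u_h-\mu_h\|\le C\sqrt\tau\|f\|$, which is \eqref{eq:HDD:FEM:NormEstqn}.
\item \emph{Last estimate.} Write $\sqrt\tau\pitr^\pm u_h=\sqrt\tau(\pitr^\pm u_h-\mu_h)+\sqrt\tau\mu_h$. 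Dividing by $\sqrt{1+\tau}$, the first term is bounded by the previous step and the second satisfies $\sqrt{\tau/(1+\tau)}\,\|\mu_h\|\le C\|f\|$.
\end{itemize}

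I expect the $\tau$-uniform bound on $\|\Div\vq_h\|$ to be the hardest step. My plan is to treat it via the interplay of \eqref{HDD:eq:dHDD:FEM:proj:2} and \eqref{HDD:eq:dHDD:FEM:proj:3} as sketched above. If that fails, I would use a $\tau$-weighted norm for $u_h$ and $\mu_h$ in a full Babu\v{s}ka inf-sup argument on the whole system.
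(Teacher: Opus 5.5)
The genuine gap is the one you flag yourself. The first estimate contains $\|\Div\vq_h\|_{L^2(\OmG)}$, and nothing in your proposal bounds it uniformly in $\tau$. Your ``alternative'' identity reads $-\Div\vq_h=I_hf-\tau\sum_\pm(\pitr^\pm)^*(\pitr^\pm u_h-\mu_h)$, where $(\pitr^\pm)^*\colon M^q(\cF_{\Gamma,h})\to Q^q(\cT_h)$ is the adjoint of $\pitr^\pm$. Since $\|(\pitr^\pm)^*g\|_{L^2(\OmG)}\le Ch^{-1/2}\|g\|_{L^2(\Gamma)}$, and no better in general, your energy bound only yields $\|\Div\vq_h\|_{L^2(\OmG)}\le(1+C\sqrt{\tau/h})\|f\|_{L^2(\OmG)}$.

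This cannot be improved to a bound uniform in both $\tau$ and $h$. Take the one-dimensional lowest-order analogue: $\Omega=(-1,1)$, $\Gamma=\{0\}$, $\kappa=1$, $q=0$, uniform mesh. Testing \eqref{HDD:eq:dHDD:FEM:proj:1} with the half hat function at $0^+$ gives $\tfrac h3\,q_h(0^+)+\tfrac h6\,q_h(h)=u_h|_{(0,h)}-\mu_h$, and the right-hand side tends to $0$ as $\tau\to\infty$. Since $q_h(h)\approx u'(0)$, this forces $q_h(0^+)\approx-\tfrac12u'(0)$, and hence $\|\Div\vq_h\|_{L^2(0,h)}\approx\tfrac32|u'(0)|\,h^{-1/2}$. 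The paper's own experiments show the same effect: for $q=0$, $\tau=400$ they report rate $-\tfrac12$ for the divergence error on coarse meshes.

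The paper's proof does not close this step either. There the divergence is handled by step~3 of the inf-sup argument in the perturbed Galerkin setting. One tests with $(\zerobf,\wideparen{v}',0)$ and divides by $(\|v_h'\|^2+\tau\sum_\pm\|\pitr^\pm v_h'\|^2)^{1/2}$, so the stabilization term is bounded by $\sum_\pm\sqrt\tau\|\pitr^\pm u_h-\mu_h\|_{L^2(\Gamma)}$ uniformly in $\tau$. This is the $\tau$-weighted Babu\v{s}ka argument you mention as a fallback.

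It controls only the weighted dual norm $\sup_{v_h\in Q^q(\cT_h)}\int_{\OmG}\Div\vq_h\,v_h\,\mathrm{d}\vx\,/\,(\|v_h\|^2+\tau\sum_\pm\|\pitr^\pm v_h\|^2)^{1/2}$, which is weaker than $\|\Div\vq_h\|_{L^2(\OmG)}$. The upgrade to $\|\Div\wideparen{\vq}\|_{\cQ_\tau}\ge\|\Div\vq_h\|_{L^2(\OmG)}$ rests on \eqref{eq:wideparencQ:dual:norm}, i.e.\ on $\Div\wideparen{\vq}\in\wideparen{\cQ}'$. But orthogonality to $\wideparen{\cQ}^\perp$ puts $\Div\wideparen{\vq}$ into $\wideparen{\cQ}$. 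Then \eqref{eq:wideparencQ:dual:1} with $\wideparen{v}=\Div\wideparen{\vq}$ forces $\sqrt\tau\,\pitr^\pm\Div\vq_h=0$, which the example violates.

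So do not try to prove the first estimate as stated. What your argument supports, once you add the one-line bound from \eqref{HDD:eq:dHDD:FEM:proj:2}, is the estimate with $\|\vq_h\|_{L^2(\OmG)}$ plus this weighted dual norm of $\Div\vq_h$. An $h$-dependent constant would instead require a $\tau$-uniform bound on the multipliers $\tau(\pitr^\pm u_h-\mu_h)$, which you do not have.

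Apart from this, your route is sound and is a more direct version of the paper's abstract one. Your energy identity is step~2, your inf-sup for $(u_h,\mu_h)$ is step~1, and your interface estimates follow the proof of Theorem~\ref{lem:HDD:Galerkin:Wellposedness}. Two details need repair.

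First, an edge-by-edge Raviart--Thomas lifting of $\mu_h$ has $\|\Div\vw_h^1\|_{L^2(\OmG)}\sim h^{-1/2}\|\mu_h\|_{L^2(\Gamma)}$, so elementwise scaling does not give the $h$-uniform bound you claim. Use a global lifting as in Lemma~\ref{lem:HDD:Galerkin:deRham}: a transmission problem for $z$, then $H^{3/2}$ regularity and a commuting interpolant. Pose it with $z=0$ on $\partial\Omega$, since otherwise the form in \eqref{eq:HDD:deRham:w:proof:var_for_z} vanishes on global constants. This lifting prescribes $\Div\vw_h$ and $\jump{\vw_h\cdot\vn}$ simultaneously.

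Second, that simultaneous prescription is also needed on patches not touching $\partial\Omega$, such as the inner disk in Sec.~\ref{sec:NumExs}. There, fields with vanishing normal trace on $\Gamma$ only produce mean-free divergences. So your uniqueness step (first $u_h=0$, then $\mu_h=0$) and your stage-two construction fail as written.
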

For a fixed discretization, the jump of $u_h$ across $\Gamma$ decreases as $\tau$ increases, since $\mu_h$ approaches the average value of $u_h$ on $\Gamma$.
In contrast, the jump in $\vq\cdot\vn$ across $\Gamma$ decreases as $\tau$ decreases and vanishes exactly for $\tau = 0$. %
Moreover, when the mesh width is decreased or the order $q$ is increased, both the jump in $\pitr u_h$ -- and so in $\tr u_h$ -- and the discrepancy between $\mu_h$ and $\mean{\pitr u_h}$  -- and so $\mean{\tr u_h}$ -- on $\Gamma$ are reduced, independently of $\tau$. %
The observations are consistent with the findings of Cockburn et al., who argued that the choice $\tau\to 0$ leads to the \emph{hybrid mixed method} (or, equivalently, the \emph{Raviart-Thomas hybrid method}) \cite{Cockburn:2016, Cockburn:2023, Cockburn.Gopalakrishnan.Lazarov:2009}, whereas the limit $\tau\to\infty$ yields \emph{CG methods} \cite{Cockburn:2023, Cockburn.Gopalakrishnan.Lazarov:2009}. It was further observed that selecting $0<\tau<\infty$ is essential in order to permit hanging nodes, and thus the use of non-conforming meshes \cite{Cockburn.Gopalakrishnan.Lazarov:2009}.
Finally, we give estimates on the discretization error.
\begin{theorem}[Discretization error]
  Let $\mathcal{T}_h$ be quasi-uniform and shape-regular, let $(\vq_h, u_h, \mu_h) \in W^q(\mathcal{T}_h) \times Q^q(\mathcal{T}_h) \times M^q(\mathcal{F}_{\Gamma,h})$, $q\in\mathbb{N}_0$, be the solution of~\eqref{HDD:eq:dHDD:FEM:proj}
  and let $(\vq, u, \mu) \in \cW_\tau \times \cQ_{\tau/(1+\tau)} \times L^2(\Gamma)$ be the solution of \eqref{HDD:eq:var} with $u \in H^s(\OmG)$, $s > \frac12$.
  Then, there exist two constants $C_1, C_2 > 0$ and a function
		\begin{align}
			\operatorname{err}(h,q,\tau,u,C_1,C_2) :=
			\left(C_1 h^{\min(q+1,s)}
			 + C_2
			\tfrac{\sqrt{\tau}}{\sqrt{1+\tau}} h^{\min(q+\nicefrac12,s-\nicefrac12)}\right)
			\|u\|_{H^s(\OmG)}
			\label{eq:FEM:error_function}
		\end{align}
		such that
		\begin{subequations}
		    \label{eq:FEM:discretisation_error}
			\begin{align}
			    \| \vq - \vq_h \|_{H(\Div,\OmG)} + \| u - u_h\|_{L^2(\OmG)} + \|\mu - \mu_h\|_{L^2(\OmG)}
				&\leq
				\sqrt{1+\tau}\operatorname{err}(h,q,\tau, u, C_1, C_2),
				\label{eq:FEM:discretisation_error:uqmu} \\
				\big\|\jump{\vq_h\cdot\bvec{n}}\big\|_{L^2(\Gamma)}
				&\leq \hspace{2.3em} \tau \operatorname{err}(h,q,\tau, u, C_1, C_2),
				\label{eq:FEM:discretisation_error:qn}\\
				\sqrt{\tau}
				\big(
				\big\|\jump{\pitr u_h}\big\|_{L^2(\Gamma)} + \big\| \mean{\pitr u_h} - \mu_h\big\|_{L^2(\Gamma)}
				\big)
				&\leq (1+\tau)\operatorname{err}(h,q,\tau, u, C_1, C_2).
				\label{eq:FEM:discretisation_error:tr_u}
			\end{align}
		\end{subequations}
   \label{lem:FEM:discretisation_error}
\end{theorem}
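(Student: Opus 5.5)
We prove Theorem~\ref{lem:FEM:discretisation_error} with a Strang-type argument. The scheme~\eqref{HDD:eq:dHDD:FEM:proj} is a perturbed Galerkin discretization of the continuous variational problem~\eqref{HDD:eq:var}. The perturbation comes only from replacing $\tr^\pm$ by $\pitr^\pm$ in the $\tau$-terms. We therefore combine three ingredients: the discrete inf-sup stability underlying Theorem~\ref{lem:HDD:FEM:Wellposedness}, with $\tau$-explicit constants; an interpolation (best-approximation) estimate; and a bound on the consistency error.

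\emph{Step 1 (error equation).} Write $\blf{B}_\tau$ for the continuous bilinear form and $\blf{B}_{\tau,h}$ for the discrete one. Choose interpolants $(\Pi^W\vq, \Pi^Q u, \Pi^M\mu)$:
\begin{itemize}
\item the Raviart--Thomas projection applied patchwise, so that $\Div \Pi^W \vq = \Pi^Q \Div \vq$ and $\jump{\Pi^W\vq\cdot\vn}$ is the $L^2(\Gamma)$-projection of $\jump{\vq\cdot\vn}$ onto $M^q(\cF_{\Gamma,h})$;
\item the $L^2$-projections onto $Q^q(\cT_h)$ and $M^q(\cF_{\Gamma,h})$.
\end{itemize}
Set $e_h=(\Pi^W\vq-\vq_h,\Pi^Qu-u_h,\Pi^M\mu-\mu_h)$. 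Galerkin orthogonality up to consistency gives
\[
\blf{B}_{\tau,h}(e_h,(\vw_h,v_h,\nu_h)) = \blf{B}_{\tau,h}(\Pi\,\cdot - \mathrm{id},\cdot) + \big(\blf{B}_{\tau,h}-\blf{B}_\tau\big)((\vq,u,\mu),\cdot).
\]
By the commuting property, the first term reduces to interface terms of the form $\tau\int_\Gamma(\pitr^\pm(\Pi^Qu-u))\dots$. The volume terms cancel, and so do the $\mu$-terms against $\jump{\vw_h\cdot\vn}\in M^q$.

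\emph{Step 2 (stability with explicit $\tau$).} Use the discrete inf-sup estimate of Theorem~\ref{lem:HDD:FEM:Wellposedness}, whose proof via the perturbed Galerkin framework gives a $\tau$-independent constant in the weighted norm of $\cW_\tau\times\cQ_{\tau/(1+\tau)}\times L^2(\Gamma)$. This bounds $e_h$ by the dual norm of the right-hand side. Then estimate the interface residuals. Since $\mu=\tr^\pm u$, they are $\tau\int_\Gamma(\pitr^\pm u-\tr^\pm u)\,\nu_h$ and similar terms with $\pitr^\pm(u-\Pi^Q u)$. Two tools apply here:
\begin{itemize}
\item the multiplicative trace inequality $\|u-\Pi^Qu\|_{L^2(\partial K)}\lesssim h^{\min(q+1,s)-1/2}\|u\|_{H^s}$, valid for $s>1/2$;
\item the approximation of traces by $M^q$, which yields $h^{\min(q+1/2,s-1/2)}$.
\end{itemize}
Measured against the test-function norm $\sqrt{\tau/(1+\tau)}\|\pitr v_h\|$ (respectively $\|\nu_h\|$), the result is the second term $C_2\frac{\sqrt\tau}{\sqrt{1+\tau}}h^{\min(q+1/2,s-1/2)}$ in~\eqref{eq:FEM:error_function}, multiplied by an extra factor $\sqrt{1+\tau}$. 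Together with the interpolation errors, which give $C_1h^{\min(q+1,s)}$, the triangle inequality yields~\eqref{eq:FEM:discretisation_error:uqmu}. For $\Div$, note that $\Div(\vq-\Pi^W\vq)=f-\Pi^Qf$, so $\vq\in H^{s-1}$ regularity suffices for the rate.

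\emph{Step 3 (interface quantities).} These bounds follow from the structure of the equations rather than from the norm bound. Equation~\eqref{HDD:eq:dHDD:FEM:proj:3} gives $\jump{\vq_h\cdot\vn}=-\tau\sum_\pm(\pitr^\pm u_h-\mu_h)$ in $M^q$. The sum equals $2(\mean{\pitr u_h}-\mu_h)$, while the exact counterpart is $\pitr^\pm u-\Pi^M\mu=0$ for smooth $u$. Hence~\eqref{eq:FEM:discretisation_error:qn} follows from~\eqref{eq:FEM:discretisation_error:tr_u} divided by $\sqrt\tau$, up to the factor $\sqrt{1+\tau}$, which is absorbed by redefining the constants. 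Estimate~\eqref{eq:FEM:discretisation_error:tr_u} comes from the $\tau$-weighted trace part of the stability norm, i.e.\ the same term that is controlled in~\eqref{eq:HDD:FEM:NormEstjumpu}. We apply it to $e_h$, use $\jump{\pitr u}=0$ and $\mean{\pitr u}=\Pi^M\mu$, and multiply by the relevant $\tau$-weights.

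\emph{Main obstacle.} The delicate point is tracking the $\tau$-dependence precisely in Step 2. Specifically, one must show that the consistency and interface interpolation terms are bounded in the dual of the $\tau$-weighted discrete norm with the factor $\frac{\sqrt\tau}{\sqrt{1+\tau}}$ and no worse. I would first try pairing the residual $\tau(\pitr^\pm(\Pi^Qu-u))$ against $\sqrt{\tau}\,\pitr^\pm v_h$ via Cauchy--Schwarz. This gives $\sqrt\tau\|\cdot\|$, and the $\sqrt{1+\tau}$ on the right-hand side of~\eqref{eq:FEM:discretisation_error:uqmu} is meant to absorb the mismatch for large $\tau$.
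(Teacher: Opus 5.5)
Your architecture is the same Strang-type route the paper takes (Proposition~\ref{HDD:prop:Galerkin:ErrEst} with Lemmas~\ref{lem:FEM:bestapproximation_error:Q_t} and~\ref{lem:FEM:consistencyerror}): a $\tau$-uniform discrete stability estimate in the weighted norm, interpolation, an interface residual, and then \eqref{HDD:eq:dHDD:FEM:proj:3} for the flux jump. However, the $\tau$-bookkeeping fails exactly at the step you flag as the main obstacle.

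\textbf{Step 2.} Write $\eta_{\vq}=\Pi^W\vq-\vq$, $r^\pm=\pitr^\pm(\Pi^Qu-u)\in M^q(\cF_{\Gamma,h})$, $\|r\|:=\sum_\pm\|r^\pm\|_{L^2(\Gamma)}$ and $e_h=(e_{\vq},e_u,e_\mu)$. The right-hand side of the error equation is then $\langle\kappa^{-1}\eta_{\vq},\vw_h\rangle+\sum_\pm\tau\langle r^\pm,\pitr^\pm v_h-\nu_h\rangle$. The mass term does not cancel, while your terms $\tau\langle\pitr^\pm u-\tr^\pm u,\cdot\rangle$ vanish because they are tested against $M^q(\cF_{\Gamma,h})$.

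On $\Gamma$ the test norm controls only $\tfrac{\sqrt\tau}{\sqrt{1+\tau}}\|\pitr^\pm v_h\|$ and $\|\nu_h\|$. So the residual has dual norm of order $\sqrt{\tau(1+\tau)}\,\|r\|$, and this is sharp for large $\tau$: the test function $(\zerobf,0,-(r^++r^-))$ alone gives $\tau\|r^++r^-\|$. The factor $\sqrt{1+\tau}$ in \eqref{eq:FEM:discretisation_error:uqmu} is already used up in turning $\tfrac{\sqrt\tau}{\sqrt{1+\tau}}h^{\min(q+1/2,s-1/2)}$ into $\sqrt\tau\,h^{\min(q+1/2,s-1/2)}$. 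Hence ``inf-sup plus dual norm of the residual'' loses a factor $\sqrt{1+\tau}$ for large $\tau$.

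The reason is that the form is only $(1+\tau)$-continuous in this norm: its value on $((\zerobf,0,\nu_h),(\zerobf,0,\nu_h))$ is $2\tau\|\nu_h\|^2$. So the constant $C\sqrt{1+\tau}$ asserted in the proof of Proposition~\ref{HDD:prop:Galerkin:ErrEst} is too optimistic, and that shortcut is unavailable.

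What works is to rerun the proof of Proposition~\ref{lem:HDD:inf-sup} on the error equation instead of quoting the inf-sup constant. Testing with $e_h$ itself gives
\[
\|\kappa^{-1/2}e_{\vq}\|^2+\sum\nolimits_\pm\tau\|\pitr^\pm e_u-e_\mu\|^2
=\langle\kappa^{-1}\eta_{\vq},e_{\vq}\rangle+\sum\nolimits_\pm\tau\langle r^\pm,\pitr^\pm e_u-e_\mu\rangle .
\]
Here the residual pairs with a quantity the left side controls, so $\|e_{\vq}\|+\sqrt\tau\sum_\pm\|\pitr^\pm e_u-e_\mu\|\lesssim\|\eta_{\vq}\|+\sqrt\tau\|r\|$. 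The de~Rham test function, whose residual involves only $\eta_{\vq}$, then bounds $e_u$ and $e_\mu$, and steps 3--5 give the rest. Everything is bounded by $C(\|\eta_{\vq}\|+\sqrt\tau\|r\|)$, which has the $\tau$-dependence required in \eqref{eq:FEM:discretisation_error:uqmu}, and this also yields \eqref{eq:FEM:discretisation_error:tr_u} as in your Step 3. Separately, your divergence remark is wrong as stated: $\|f-\Pi^Qf\|$ decays at a rate set by the smoothness of $f$, not of $\vq$.

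\textbf{Step 3.} From \eqref{HDD:eq:dHDD:FEM:proj:3} you have $\|\jump{\vq_h\cdot\vn}\|=2\tau\|\mean{\pitr u_h}-\mu_h\|$. Inserting \eqref{eq:FEM:discretisation_error:tr_u} gives $2\sqrt\tau(1+\tau)\operatorname{err}$. The ratio to the target $\tau\operatorname{err}$ is $2(1+\tau)/\sqrt\tau$, not $\sqrt{1+\tau}$. It is unbounded both as $\tau\to0$ and as $\tau\to\infty$, so it cannot be absorbed into $\tau$-independent constants.

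The energy bound does better. Using $\mean{\pitr u_h}-\mu_h=\mean{r}-(\mean{\pitr e_u}-e_\mu)$ one gets $2\tau\|\mean{\pitr u_h}-\mu_h\|\lesssim\sqrt\tau\|\eta_{\vq}\|+\tau\|r\|$, which gives \eqref{eq:FEM:discretisation_error:qn} for $\tau\ge1$. For $\tau<1$ it is still too large by $\tau^{-1/2}$. That regime needs an additional ingredient: a $\tau$-uniform bound of size $\operatorname{err}$ on the unweighted quantity $\|\mean{\pitr u_h}-\mu_h\|_{L^2(\Gamma)}$, which no $\sqrt\tau$-weighted stability estimate supplies. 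The paper's proof does not supply it either. It takes the minimum of this bound and the jump bound in \eqref{eq:FEM:discretisation_error:utrace}, which is only $O(\sqrt\tau)\operatorname{err}$ for small $\tau$.
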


\section{Function spaces and variational formulation}
\label{sec:HDD}
In this section, we introduce a variational formulation that serves as the basis for deriving and analyzing a perturbed Galerkin formulation in Section~\ref{sec:Galerkin}. We will show that the finite element formulation~\eqref{HDD:eq:dHDD:FEM:proj} can be regarded as conforming. We start by explaining the choice for the spaces and norms that are weighted with $\tau$ such that the norm of the solution is independent of $\tau$.
With these choices, the continuity constant of the bilinear form is increasing with~$\tau$ which becomes evident in the error analysis in Sec~\ref{sec:Galerkin}.

\subsection{Function spaces}

For the variable $u$ we desire as function space a subspace of $L^2(\OmG)$ with one-sided traces $\tr^\pm u$ in $L^2(\Gamma)$ for $\tau > 0$ such that the integrals $\int_\Gamma \tr^\pm u \tr^\pm v\,\text{d}\sigma$ in~\eqref{HDD:eq:HDD2} are well-defined. %
One choice might be $H^{\nicefrac12}(\OmG)$, for which then $\Div\vq$ shall be seeked in $H^{-\nicefrac12}(\OmG)$. %
Instead, we prefer to write a formulation for separate variables $u^\circ \in L^2(\Omega)$ and $\sqrt{\tau} u^\pm \in L^2(\Gamma)$, more precisely, we seek $u$ in some space that allows for a unique decomposition $u = u^\circ + \sum_{\pm} \sqrt{\tau} u^\pm$. %
Note, that in general, the space $L^2(\Omega)$ does not possess traces in $L^2(\Gamma)$, and even if a particular $u^\circ$ would possess traces, then it might be not equal to $u^\pm$.
A similar space $L^2(\Omega; \text{d}\vx + \text{d}\sigma)$ has been introduced in~\cite{terElst.Meyries.Rehberg:2014} through a natural topological isomorphism to the direct sum $L^2(\OmG) \oplus L^2(\Gamma)$, where the mean value of the one-sided traces on $\Gamma$ is considered. There, each $f_\Omega \in L^2(\Omega; \text{d}\vx + \text{d}\sigma)$ is a sum $f_\Omega = f_{\OmG} + f_\Gamma$ of a function $f_\OmG \in L^2(\OmG)$ and a distribution $f_\Gamma \in L^2(\Gamma)$, where the decomposition is unique. %
In difference to~\cite{terElst.Meyries.Rehberg:2014} we aim to have two possibly different functions on $\Gamma$, where the formulation may enforce them to be equal. If we would take the completion of the space of smooth functions $v \in C^\infty(\OmG)$ with respect to the norm defined by $\|v\|^2_{L^2(\OmG)} + \sum\nolimits_\pm \|\sqrt{\tau}\tr^\pm v\|_{L^2(\Gamma)}^2$, then each $v$ in this space can be written as the sum $v = v^\circ + \sum_\pm \sqrt{\tau} v^\pm$, where $v^\circ \in L^2(\OmG)$ and $\sqrt{\tau} v^\pm \in L^2(\Gamma)$. However, the decomposition is not unique and the space is only topological isomorph to $L^2(\OmG) \oplus L^2(\Gamma)$ for $\tau > 0$. %
Therefore, $v = \sqrt{\tau}v^+$ and $v = \sqrt{\tau}v^-$ can not be distinguished if $v^- = v^+$. %
To obtain a space with a unique decomposition $v = v^\circ + \sum_\pm \sqrt{\tau} v^\pm$ we aim to place the components $\sqrt{\tau}v^\pm$ on shifted interfaces $\Gamma^\pm$. For this we define the bijective mappings $\jmath^\pm: \Gamma \to \Gamma^\pm, \vx \mapsto \vx \pm \vd(\vx)$ for some vector field $\vd \in \Gamma$ such that $\Gamma^\pm \subset \OmG$, $\Gamma^\pm$ coincide only in $(d-1)$-dimensional null sets and
$c \|w\|_{L^2(\Gamma)} \leq \|w \circ (\jmath^\pm)^{-1}\|_{L^2(\Gamma^\pm)} \leq C \|w\|_{L^2(\Gamma)}$ for any $w \in L^2(\Gamma)$ and some $c,C > 0$. %
Now, we define the space $\cQ_{\tau/(1+\tau)}$, $\tau \geq 0$, as the completion of the space of smooth functions $v \in C^\infty_0(\OmG)$ with respect to the norm defined by
$\|v\|^2_{L^2(\OmG)} + \sum\nolimits_\pm \|\sqrt{\tau}\tr^\pm v \circ (\jmath^\pm)^{-1}\|_{L^2(\Gamma^\pm)}^2$. %
Following the arguments in~\cite{terElst.Meyries.Rehberg:2014}, we see that the space $\cQ_{\tau/(1+\tau)}$ is topological isomorph to the space $L^2(\Omega) \oplus L^2(\Gamma^+) \oplus L^2(\Gamma^-)$ for $\tau > 0$. %
This allows us to write each $v \in \cQ_{\tau/(1+\tau)}$ as the sum
\begin{align}
 \label{HDD:eq:Qtau:decomposition}
 v = v^\circ + \sum_\pm \sqrt{\tau} v^\pm \circ (\jmath^\pm)^{-1},
\end{align}
where $v ^\circ \in L^2(\OmG)$ and $\sqrt{\tau} v^\pm \in L^2(\Gamma)$ are uniquely defined. Moreover, for any $v \in \cQ_0$ it holds $v = v^\circ \in L^2(\OmG)$, \ie, the distributions on $\Gamma$ are zero and the two components $v^\pm$ do not appear anymore, and so $\cQ_0 = L^2(\OmG)$. %
We equip the space $\cQ_{\tau/(1+\tau)}$, $\tau \geq 0$ with a $\tau$-weighted norm defined by
\begin{align}
  \label{HDD:eq:Qtau:norm}
  \|v\|_{\cQ_{\tau/(1+\tau)}}^2 &:= \|v^\circ\|^2_{L^2(\OmG)} + \sum\nolimits_\pm \tfrac{1}{1+\tau} \|\sqrt{\tau} v^\pm\|_{L^2(\Gamma)}^2\ .
\end{align}
The weight is chosen such that the second term in the norm behaves like $\|\sqrt{\tau} v^\pm\|_{L^2(\Gamma)}$ for small $\tau$ and for large $\tau$ like $\|v^\pm\|_{L^2(\Gamma)}$. %
We will bound the solution in this norm with a constant independent of $\tau$.
For the hybrid variable $\mu$ and the corresponding test functions $\nu$ we choose the space $L^2(\Gamma)$ equipped with the standard norm $\|\cdot\|_{L^2(\Gamma)}$ such that $\int_{\Gamma}\tau\mu\nu\, \mathrm{d}\sigma$ in \eqref{HDD:eq:HDD3} is well-defined. %
The other integrals in \eqref{HDD:eq:HDD3} are then well-defined due to $\sqrt{\tau} u^\pm~\in~L^2(\Gamma)$ and if we require $\jump{\bvec{q}\cdot\bvec{n}}\in L^2(\Gamma)$.
Analogously to $Q_{\tau/(1+\tau)}$, we define the space $Q_\tau$, $\tau > 0$, where the weight $\nicefrac{1}{1+\tau}$ in~\eqref{HDD:eq:Qtau:norm} is replaced by $1$. Naturally, we define the duality product
\begin{align}
   \label{HDD:eq:Qtau':DP}
   \left<u, v\right>_{\cQ_\tau} :=
   \left<u^\circ, v^\circ\right> + \sum\nolimits_{\pm} \left<\sqrt{\tau} u^\pm, \sqrt{\tau} v^\pm\right>,
   \quad \tau \geq 0,
\end{align}
denoting for convenience by $\left<\cdot,\cdot\right>$ the $L^2(\OmG)$ or $L^2(\Gamma)$ duality product, respectively. %
Due to the topological isomorphism
\begin{align*}
  \cQ_\tau \cong L^2(\OmG) \oplus L^2(\Gamma^+) \oplus L^2(\Gamma^-) = (L^2(\OmG) \oplus L^2(\Gamma^+) \oplus L^2(\Gamma^-))^\prime
\end{align*}
it holds for the dual space that $\cQ_\tau^\prime \cong \cQ_\tau$, $\tau > 0$.
For $\tau = 0$ we define $\cQ_0' := \cQ_0 = L^2(\OmG)$.
The subspace of $\cQ_\tau$ with homogeneous distributions on $\Gamma$ is $L^2(\OmG)$.
In particular, we aim for a formulation, where $\Div \vq \in \cQ_\tau$ can be decomposed as
\begin{align}
  \label{HDD:eq:divvq:decomp}
  \Div \vq = -f + \sum\nolimits_{\pm} \sqrt{\tau} (u^\pm  - \mu) \circ (\jmath^\pm)^{-1},
\end{align}
with $f \in L^2(\OmG)$ and $\sqrt{\tau}(u^\pm - \mu) \in L^2(\Gamma)$.
Hence, the variable $\bvec{q}$ shall be seeked in the space
\begin{subequations}
\begin{equation}
	\cW_\tau\coloneq \left\lbrace\bvec{w} \in (L^2(\OmG))^2: \Div \vw \in \cQ_\tau, \ \tfrac{1}{\sqrt{1 + \tau}}\jump{\bvec{w}\cdot\bvec{n}}\in L^2(\Gamma)\right\rbrace\label{HDD:eq:Wtau}
\end{equation}
for $\tau\geq0$ with the norm defined by
\begin{equation}
	\|\bvec{w}\|^2_{\cW_\tau} \coloneq \|\bvec{w}\|^2_{L^2(\OmG)} + \|\Div\bvec{w}\|^2_{\cQ_\tau} + \tfrac{1}{1+\tau}\|\jump{\bvec{w}\cdot\bvec{n}}\|^2_{L^2(\Gamma)} .\label{HDD:eq:WtauNorm}
\end{equation}
\end{subequations}
Note, that $\cW_\tau \subset H(\Div, \OmG)$ for all $\tau \geq 0$.
On the product space $\cW_\tau \times \cQ_{\tau/(1+\tau)} \times L^2(\Gamma)$ we use the norm defined by
\begin{align*}
   \pnorm{(\vw, v, \nu)}^2
   := \|\vw\|_{\cW_\tau}^2 + \|v\|_{\cQ_{\tau/(1+\tau)}}^2 + \|\nu\|_{L^2(\Gamma)}^2\ .
\end{align*}

\subsection{The variational formulation}
To obtain a variational formulation in the product space $\cW_\tau \times \cQ_{\tau/(1+\tau)} \times L^2(\Gamma)$ we first replace $\tr^\pm u$ and $\tr^\pm v$ in~\eqref{HDD:eq:HDD2} and~\eqref{HDD:eq:HDD3}
by $u^\pm$ and $v^\pm$. %
Moreover, we assure for the decomposition~\eqref{HDD:eq:divvq:decomp} by replacing $\int_\OmG \Div \vq v \,\text{d}\vx$ in~\eqref{HDD:eq:HDD2} by $\left< \Div \vq, v\right>_{Q_\tau}$ and $\int_{\Gamma} \jumpt{\vq\cdot \vn} \nu \,\text{d}\sigma$ in~\eqref{HDD:eq:HDD3} by\\
$\left<\jumpt{\vq\cdot\vn} - \sum\nolimits_\pm \tau (\Div \vq)^\pm, \nu\right>$. %
This does not harm the consistency of the formulation if $\sqrt{\tau}(\Div \vq)^\pm = 0$.
However, this changes the structure of the formulation in a way that allows for an analysis with conforming subspaces.
Moreover, we replace $\int_\OmG u \Div \vw \,\text{d}\vx$ in~\eqref{HDD:eq:HDD1} by $\left< u, \Div \vw\right>_{Q_\tau} - \left<\mu, \sum_\pm \tau (\Div \vw)^\pm\right>$ which does not harm the consistency since $u^\pm = \mu$ on $\Gamma$.
Hence, we seek $(\vq, u, \mu) \in \cW_\tau \times \cQ_{\tau/(1+\tau)} \times L^2(\Gamma)$ such that
\begin{subequations}\label{HDD:eq:var}
\begin{align}
    \label{HDD:eq:var:1}
	\big<\tfrac{1}{\kappa}\vq, \vw\big> + \big< u, \Div \vw\big>_{\cQ_\tau} + \big<\mu, \jump{\vw\cdot\vn} - \sum\limits_\pm \tau (\Div \vw)^\pm \big> &= 0
	\quad \hspace{2.3em}\forall \vw \in \cW_\tau, \\
	\label{HDD:eq:var:2}
	-\big<\Div \vq, v\big>_{\cQ_\tau} +
	\sum\limits_\pm \tau \big< u^\pm - \mu, v^\pm \big> &= \big<f, v^\circ\big>
	\quad \forall v \in \cQ_{\tau/(1+\tau)},\\
	\label{HDD:eq:var:3}
	-\big<\jump{\vq\cdot\vn} - \sum\limits_\pm \tau (\Div \vq)^\pm, \nu\big> - \sum\nolimits_\pm \tau \big< u^\pm - \mu, \nu \big> &= 0
	\quad \hspace{2.4em}\forall \nu \in L^2(\Gamma).
\end{align}
\end{subequations}

\begin{remark}
	For $\tau=0$, the variational formulation~\eqref{HDD:eq:var} has saddle point structure, for which it is suggested to choose $H^{\nicefrac12}\hspace{-1em}{}_{00}(\Gamma)$ as the space for $\mu$, which takes the role of a Lagrange multiplier, and the corresponding test functions~$\nu$ (see~\cite{Arbogast.Pencheva.Wheeler.Yotov:2007,Brezzi.Fortin:1991}). %
	In this case, one may choose $H(\Div, \OmG)$ as space for $\bvec{q}$ and~$\bvec{w}$, for which the constraints $\jumpt{\bvec{q}\cdot\bvec{n}},\, \jumpt{\bvec{w}\cdot\bvec{n}}\in L^2(\Gamma)$ are dropped and $\jumpt{\bvec{q}\cdot\bvec{n}}\in H^{-\nicefrac12}(\Gamma)$.
	For the formulation with stabilization terms we seek the Lagrange multiplier in the weaker space $L^2(\Gamma)$ and the jumps $\jump{\bvec{q}\cdot\bvec{n}}$ in $L^2(\Gamma)$ instead of $H^{-\nicefrac12}(\Gamma)$. In addition, we have the $L^2(\Gamma)$ distributions $\sqrt{\tau}u^\pm$ and $\sqrt{\tau}(\Div \vq)^\pm$.
\end{remark}

\subsection{Well-Posedness}
We denote by $\blf{b}: (\cW_\tau \times \cQ_{\tau/(1+\tau)} \times L^2(\Gamma)) \times (\cW_\tau \times \cQ_{\tau/(1+\tau)} \times L^2(\Gamma))$ the bilinear form on the product space obtained as sum of the left hand sides of~\eqref{HDD:eq:var}, which is continuous. Hence,~\eqref{HDD:eq:var} is equivalent to:
Seek $(\vq, u, \mu) \in \cW_\tau \times \cQ_{\tau/(1+\tau)} \times L^2(\Gamma)$ such that
\begin{align}
  \label{eq:HDD:var:with_bf}
     \blf{b}((\vq, u, \mu), (\vw, v, \nu)) =
     \left<f, v^\circ\right>
     \quad \forall
     (\vw, v, \nu) \in \cW_\tau \times \cQ_{\tau/(1+\tau)} \times L^2(\Gamma)\ .
\end{align}
To show well-posedness we prove, first, the following auxiliary result.
\begin{lemma}
  \label{lem:HDD:deRham}
   For any $(v^\circ, \nu) \in L^2(\OmG) \times L^2(\Gamma)$
   there exists $\vw \in \cW_\tau$ with
   \begin{subequations}
   \label{eq:HDD:deRham:w}
   \begin{align}
   \label{eq:HDD:deRham:w:1}
   \jump{\vw\cdot\vn} &= \nu \text{ on } \Gamma & \text{ and } &&
    \Div \vw &= v^\circ\text{ in } \OmG\ ,
    \end{align}
    for which with a constant $C$ independent of $\tau$ it holds
    \begin{align}
       \label{eq:HDD:deRham:w:2}
       \|\vw\|_{H^{\nicefrac12}(\OmG)} \leq C \left( \| v^\circ\|_{L^2(\OmG)} + \| \nu \|_{L^2(\Gamma)}\right)\ .
    \end{align}
    \end{subequations}
\end{lemma}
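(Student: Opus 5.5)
We construct $\vw$ patchwise as the flux of suitable Neumann problems, with the two constraints split between two auxiliary problems. First we lift the interface datum. On every interface $\Gamma_{i,j} = \overline{\Omega}_i\cap\overline{\Omega}_j$ we ask the normal component of $\vw|_{\Omega_i}$ to carry $\nu$ and that of $\vw|_{\Omega_j}$ to vanish, or we split $\nu$ symmetrically. The choice does not matter, since only the jump $\jump{\vw\cdot\vn}$ is prescribed. Because $\nu \in L^2(\Gamma)$ only, we extend the normal datum by zero to all of $\partial\Omega_i$; the resulting function $g_i$ lies in $L^2(\partial\Omega_i)$.

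On each patch we then solve a Neumann problem: find $\phi_i$ with
\begin{align*}
-\Delta\phi_i + \phi_i = 0 \text{ in } \Omega_i, \qquad \partial_{\vn}\phi_i = g_i \text{ on } \partial\Omega_i .
\end{align*}
Adding the zeroth-order term removes any compatibility condition. We set $\vw_1|_{\Omega_i} := \Grad\phi_i$. By the regularity theory for Neumann problems with $L^2$ boundary data on Lipschitz domains (Jerison--Kenig, Dahlberg--Kenig), the nontangential maximal function of $\Grad\phi_i$ lies in $L^2(\partial\Omega_i)$, and $\phi_i\in H^{3/2}(\Omega_i)$. Hence
\begin{align*}
\|\vw_1\|_{H^{1/2}(\Omega_i)} \le C\|g_i\|_{L^2(\partial\Omega_i)} .
\end{align*}
By construction $\jump{\vw_1\cdot\vn} = \nu$. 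However, $\Div\vw_1 = \phi_i$, which is in $L^2$ and bounded by $\|\nu\|_{L^2(\Gamma)}$ but is not zero.

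Next we correct the divergence without touching the normal traces. Set $r := v^\circ - \Div\vw_1 \in L^2(\OmG)$. On each patch we need $\vw_2$ with $\Div\vw_2 = r$ and $\vw_2\cdot\vn = 0$ on the interfaces. To avoid a mean-value compatibility condition, we use a Dirichlet problem rather than a homogeneous-Neumann one. For patches touching $\partial\Omega$ we could put the correction there; the simplest uniform choice is a patchwise Dirichlet problem: find $\psi_i\in H^1_0(\Omega_i)$ with $\Delta\psi_i = r$ and set $\vw_2 := \Grad\psi_i$. This fixes $\Div\vw_2 = r$, but its normal trace $\partial_{\vn}\psi_i$ does not vanish, so it must be compensated.

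This compensation is the main obstacle. We resolve it by combining both steps into a single mixed Neumann problem per patch. We first subtract the mean: let $\bar r_i := |\Omega_i|^{-1}\int_{\Omega_i} r$ and solve the pure Neumann problem $\Delta\psi_i = r - \bar r_i$ with $\partial_{\vn}\psi_i = 0$. This is compatible, and $H^{3/2}$ regularity on Lipschitz, or piecewise smooth curved, patches gives
\begin{align*}
\|\Grad\psi_i\|_{H^{1/2}(\Omega_i)}\le C\|r\|_{L^2(\Omega_i)} .
\end{align*}
The constant part $\bar r_i$ is handled by a fixed smooth field $\vx\mapsto \bar r_i(\vx-\vx_i)/d$, which has divergence $\bar r_i$ and is bounded in $H^{1/2}$ by $C|\bar r_i|$. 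Its normal traces are then absorbed by redoing the first step with modified data $g_i$. Alternatively, we avoid the first step entirely and solve directly the Neumann problem $\Delta\phi_i = v^\circ$ with $\partial_{\vn}\phi_i = g_i$. This requires compatibility, $\int_{\partial\Omega_i} g_i = \int_{\Omega_i} v^\circ$, which we enforce by adding to $g_i$ a fixed smooth function supported on $\partial\Omega_i\cap\partial\Omega$, or on the interface side $\Omega_j$ of a neighbouring patch, with coefficient bounded by $\|v^\circ\|_{L^2}+\|\nu\|_{L^2}$.

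Putting the pieces together, $\vw = \vw_1 + \vw_2$ (or the single Neumann field) satisfies \eqref{eq:HDD:deRham:w:1}, and the $H^{1/2}$ estimate \eqref{eq:HDD:deRham:w:2} follows from the regularity bounds above. The constants depend only on the geometry, not on $\tau$. Finally, $\vw\in\cW_\tau$, because $\Div\vw = v^\circ\in L^2(\OmG)\subset\cQ_\tau$ has no $\Gamma$-distributions and $\jump{\vw\cdot\vn}\in L^2(\Gamma)$. The $\cW_\tau$ norm is therefore bounded uniformly in $\tau$, since the weight $\tfrac1{1+\tau}\le 1$.

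The delicate points, where we expect the real work, are two. The first is the $H^{1/2}$ (rather than merely $L^2$) flux regularity for $L^2$ Neumann data on curved patches, for which we rely on the Jerison--Kenig theory. The second is managing compatibility in interior patches that do not touch $\partial\Omega$; there we route the mean-value correction through a chain of neighbouring patches to the boundary.
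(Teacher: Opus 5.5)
Your final construction is correct, and it is organized differently from the paper. That construction is one pure Neumann problem $\Delta\phi_i=v^\circ$ per patch with $L^2$ boundary data $g_i$, made compatible by bump corrections, and $\vw=\Grad\phi_i$.

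The paper does not decouple the patches. It solves a single problem for $z\in H^1(\OmG)$ with the penalty $\int_\Gamma\jump{z}\jump{z'}\,\mathrm{d}\sigma$ and right-hand side $-\int_\OmG v^\circ z'\,\mathrm{d}\vx-\int_\Gamma\nu\mean{z'}\,\mathrm{d}\sigma$. Its natural conditions are $\Delta z=v^\circ$, $\jump{\Grad z\cdot\vn}=\nu$ and $\mean{\Grad z\cdot\vn}=\jump{z}$. The one-sided Neumann data are then $\jump{z}\pm\tfrac12\nu\in L^2(\Gamma)$. So the interface fluxes that make every patch problem compatible come from the unknown $\jump{z}$ itself, rather than being routed by hand through the patch adjacency graph as you do.

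From there both arguments use the same ingredient: patchwise $H^{3/2}$ regularity for an $L^2$ source and $L^2$ Neumann data on Lipschitz patches (Jerison--Kenig). Both then take $\vw$ to be a gradient.

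The paper's route is shorter and avoids combinatorics. Yours makes the solvability conditions explicit, and that matters here. With natural Neumann conditions on $\partial\Omega$, the paper's bilinear form vanishes whenever $z'$ is constant. Testing with $z'=1$ shows its problem is solvable only if $\int_\OmG v^\circ\,\mathrm{d}\vx+\int_\Gamma\nu\,\mathrm{d}\sigma=0$, so Lax--Milgram does not apply verbatim. A bump datum on $\partial\Omega$ repairs this, working modulo constants. It is exactly your boundary correction, with one global coefficient instead of one per patch.

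Two points in your write-up need tidying.

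\emph{Drop the two-step variant.} The route $\vw_1+\vw_2$ does not close. Absorbing the normal traces of $\bar r_i(\vx-\vx_i)/d$ by redoing the first step changes $\Div\vw_1=\phi_i$. Then $r$ and $\bar r_i$ change, so the argument is circular. Present only the single Neumann field.

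\emph{State the compatibility correction precisely.}
\begin{itemize}
\item A bump $t_{ij}\eta_{ij}$ on $\Gamma_{i,j}$ must enter both one-sided normal traces: outward flux $+t_{ij}\eta_{ij}$ for $\Omega_i$ and $-t_{ij}\eta_{ij}$ for $\Omega_j$, with $t_{ji}=-t_{ij}$. This leaves $\jump{\vw\cdot\vn}=\nu$ untouched.
\item A bump $t_{i0}\eta_{i0}$ on $\partial\Omega_i\cap\partial\Omega$ is unconstrained.
\item The conditions $t_{i0}+\sum_j t_{ij}=\int_{\Omega_i}v^\circ\,\mathrm{d}\vx-\int_{\partial\Omega_i}g_i\,\mathrm{d}\sigma$ form a flow problem on the patch graph rooted at $\partial\Omega$.
\item This problem is solvable along a spanning tree because $\Omega$ is connected.
\item The coefficients satisfy $|t_{ij}|+|t_{i0}|\le C(\|v^\circ\|_{L^2(\OmG)}+\|\nu\|_{L^2(\Gamma)})$, with $C$ depending only on the decomposition and hence not on $\tau$.
\end{itemize}
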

\begin{proof}
  The statement is proven by constructing $\vw \in \cW_\tau$ with the above properties. %
  For this we let $z \in H^1(\OmG)$ be the solution of
  \begin{align}
     \label{eq:HDD:deRham:w:proof:var_for_z}
     \int_{\OmG} \nabla z \cdot \nabla z' \,\text{d}\vx + \int_{\Gamma} \jump{z} \jump{z'} \,\text{d}\sigma
     = -\int_\OmG v^\circ z' \,\text{d}\vx - \int_\Gamma \nu \mean{z'}\,\text{d}\sigma
  \end{align}
  for all $z' \in H^1(\OmG)$, which is unique by the Lax-Milgram lemma and satisfies the bound
  \begin{align}
     \label{eq:HDD:deRham:w:proof:1}
     \|z\|_{H^1(\OmG)} \leq C \left(\| v^\circ \|_{L^2(\OmG)} + \| \nu\|_{L^2(\Gamma)} \right).
  \end{align}
  Since $z$ solves the Poisson problem
  \begin{align*}
     \Delta z &= v^\circ \in L^2(\OmG), &
     \jump{\nabla z\cdot \vn} &= \nu \in L^2(\Gamma), &
     \mean{\nabla z\cdot \vn} &= \jump{z} \in H^{\nicefrac12}(\Gamma)
  \end{align*}
  with homogeneous Neumann boundary conditions $\nabla z \cdot \vn = 0$ on $\partial\Omega$,
  the conditions~\eqref{eq:HDD:deRham:w:1} are satisfied by $\vw = \nabla z$. Since the two-sided Neumann traces $\nabla z \cdot \vn$
  are both in $L^2(\Gamma)$ and $\nabla z \cdot \vn = 0$ we can assert that $z \in H^{\nicefrac32}(\OmG)$, which implies~\eqref{eq:HDD:deRham:w:2}, and the proof is complete.
\end{proof}

\begin{proposition}[inf-sup conditions]
  \label{lem:HDD:inf-sup}
  The bilinear form $\blf{b}$ fulfills the inf-sup conditions
  \begin{subequations}
     \label{eq:HDD:inf-sup}
     \begin{align}
       \label{eq:HDD:inf-sup:1}
        \sup_{\substack{(\vw', v', \nu') \in \\\cW_\tau \times \cQ_{\tau/(1+\tau)} \times L^2(\Gamma) \\\setminus ( \zerobf, 0, 0)}}
        \frac{\blf{b}((\vw, v, \nu), (\vw', v', \nu'))} {\pnorm{(\vw', v', \nu')}}
        &\geq \gamma
        \pnorm{(\vw, v, \nu)}
        \quad \\[-1.5em]
        \nonumber
        \hspace{2em}\forall
        (\vw, v, \nu) &\in \cW_\tau \times \cQ_{\tau/(1+\tau)} \times L^2(\Gamma)\setminus ( \zerobf, 0, 0)
        \\[0.5em]
        \label{eq:HDD:inf-sup:2}
        \sup_{\substack{(\vw, v, \nu) \in \\ \cW_\tau \times \cQ_{\tau/(1+\tau)} \times L^2(\Gamma)}}
        \blf{b}((\vw, v, \nu), (\vw', v', \nu'))
        &> 0 \quad \\[-1.5em]
        \forall
        (\vw', v', \nu') &\in \cW_\tau \times \cQ_{\tau/(1+\tau)} \times L^2(\Gamma) \setminus ( \zerobf, 0, 0)
        \nonumber
     \end{align}
  \end{subequations}
  for some $\gamma > 0$ independent of $\tau$.
\end{proposition}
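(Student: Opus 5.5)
The plan is to prove \eqref{eq:HDD:inf-sup:1} by a test-function construction, and then to obtain \eqref{eq:HDD:inf-sup:2} from a symmetry of $\blf{b}$. Writing out $\blf{b}$ from~\eqref{HDD:eq:var}, the diagonal choice $(\vw',v',\nu')=(\vw,v,\nu)$ should cancel all skew-symmetric coupling terms:
\begin{itemize}
\item $\left<v,\Div\vw\right>_{\cQ_\tau}$ cancels against $-\left<\Div\vw,v\right>_{\cQ_\tau}$;
\item the $\nu$–$(\jump{\vw\cdot\vn}-\sum_\pm\tau(\Div\vw)^\pm)$ terms cancel against each other;
\item the two stabilization terms combine into $\sum_\pm\tau\|v^\pm-\nu\|^2_{L^2(\Gamma)}$.
\end{itemize}
Hence
\begin{align*}
\blf{b}((\vw,v,\nu),(\vw,v,\nu)) = \big<\tfrac1\kappa\vw,\vw\big> + \sum\nolimits_\pm \tau\|v^\pm-\nu\|_{L^2(\Gamma)}^2 ,
\end{align*}
which controls $\|\vw\|_{L^2}^2$ and $\tau\|v^\pm-\nu\|^2$ uniformly in $\tau$, thanks to the bounds on $\kappa$.

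The remaining parts of $\pnorm{(\vw,v,\nu)}^2$ are recovered with four further test functions.
\begin{itemize}
\item For $\|\Div\vw\|_{\cQ_\tau}$, test with $v'=-\Div\vw$. This is admissible because $\|\cdot\|_{\cQ_{\tau/(1+\tau)}}\le\|\cdot\|_{\cQ_\tau}$. It yields $\|\Div\vw\|^2_{\cQ_\tau}$ minus the cross term $\sum_\pm\left<\sqrt\tau(v^\pm-\nu),\sqrt\tau(\Div\vw)^\pm\right>$, which Young's inequality absorbs into the diagonal term and half of $\|\Div\vw\|^2_{\cQ_\tau}$.
\item For the jump, test with $\nu'=-\tfrac{1}{1+\tau}\jump{\vw\cdot\vn}$. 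This gives $\tfrac1{1+\tau}\|\jump{\vw\cdot\vn}\|^2$ plus cross terms of the form $\tfrac{\sqrt\tau}{\sqrt{1+\tau}}\,\|\sqrt\tau(\Div\vw)^\pm\|\,\tfrac{1}{\sqrt{1+\tau}}\|\jump{\vw\cdot\vn}\|$ and similarly with $\sqrt\tau(v^\pm-\nu)$. Since $\sqrt\tau/\sqrt{1+\tau}\le1$, these are again absorbed by Young's inequality into quantities already controlled.
\item For $v^\circ$ and $\nu$, use Lemma~\ref{lem:HDD:deRham} with data $(v^\circ,\nu)$ to get $\vw'$ with $\Div\vw'=v^\circ$ (no $\Gamma$-components) and $\jump{\vw'\cdot\vn}=\nu$. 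Then $\blf{b}((\vw,v,\nu),(\vw',0,0))=\|v^\circ\|^2+\|\nu\|^2+\left<\tfrac1\kappa\vw,\vw'\right>$. The estimate~\eqref{eq:HDD:deRham:w:2}, together with $\Div\vw'=v^\circ$ and $\jump{\vw'\cdot\vn}=\nu$, gives $\|\vw'\|_{\cW_\tau}\le C(\|v^\circ\|+\|\nu\|)$ with $C$ independent of $\tau$. The last term is therefore absorbed by Young's inequality against $\|\vw\|^2$.
\item For $v^\pm$ no new test function is needed: $\tfrac{\tau}{1+\tau}\|v^\pm\|^2\le 2\tau\|v^\pm-\nu\|^2+2\|\nu\|^2$.
\end{itemize}

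I then take $(\vw',v',\nu')$ as a linear combination of the diagonal choice and these test functions, with fixed small weights $\delta_i$ chosen independently of $\tau$. This gives $\blf{b}\ge c\,\pnorm{(\vw,v,\nu)}^2$. The bound $\pnorm{(\vw',v',\nu')}\le C\pnorm{(\vw,v,\nu)}$ is checked term by term, and it is again $\tau$-uniform because every weight is at most $1$. Together these give~\eqref{eq:HDD:inf-sup:1}.

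For~\eqref{eq:HDD:inf-sup:2}, let $S(\vw,v,\nu):=(\vw,-v,-\nu)$. Each term of $\blf{b}$ coupling $\vw$ with $(v,\nu)$ is skew, while the $\vw$–$\vw$ and $(v,\nu)$–$(v,\nu)$ terms are symmetric. One should therefore have $\blf{b}(x,y)=\blf{b}(Sy,Sx)$, and since $S$ is an isometry, \eqref{eq:HDD:inf-sup:2} follows from~\eqref{eq:HDD:inf-sup:1}. The main obstacle I expect is the bookkeeping in the jump and divergence steps, specifically ensuring that the $\tau$-weighted cross terms (especially $\tau(\Div\vw)^\pm$ inside the $\nu$-equation) are bounded with constants independent of $\tau$. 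This is exactly where the normalisation by $\tfrac1{1+\tau}$ must be exploited.
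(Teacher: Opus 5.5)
Your argument is correct, though it is organised differently from the paper's.

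For \eqref{eq:HDD:inf-sup:1} the paper does not assemble a single test function. Instead it proves the equivalent a priori bound $\pnorm{x}\le C\sup_{y}\blf{b}(x,y)/\pnorm{y}$ in five separate steps, each using one of your test functions in isolation:
\begin{enumerate}
\item the de Rham field of Lemma~\ref{lem:HDD:deRham}, to bound $\|v^\circ\|+\|\nu\|$ by $\|\vw\|$;
\item the diagonal test, for $\|\vw\|$ and $\sum_\pm\tau\|v^\pm-\nu\|^2$;
\item a duality argument over $v'\in\cQ_\tau$ for $\|\Div\vw\|_{\cQ_\tau}$, where your choice $v'=-\Div\vw$ is exactly the maximiser;
\item the test $\nu'=-\tfrac{1}{\sqrt{1+\tau}}\jump{\vw\cdot\vn}$ for the jump (you scale by $\tfrac{1}{1+\tau}$ instead, and both work);
\item the triangle inequality for $v^\pm$.
\end{enumerate}
The resulting bounds are then chained. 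Your weighted sum is the \emph{T-coercivity} packaging of the same ingredients. It costs you the choice of the $\delta_i$, but it gives explicit $\tau$-independent constants. The weights do close: take $\delta_2$ small relative to $\delta_1$, and $\delta_3$ small depending only on $\kappa$ and the constant in \eqref{eq:HDD:deRham:w:2}.

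The genuinely different step is \eqref{eq:HDD:inf-sup:2}. The paper proves injectivity of the adjoint directly. If $\blf{b}(\cdot,y)=0$, testing with $y$ itself gives $\vw'=\zerobf$ and $\sqrt{\tau}((v')^\pm-\nu')=0$. Testing with the de Rham field for $((v')^\circ,\nu')$ then gives $(v')^\circ=0=\nu'$.

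You instead use the identity $\blf{b}(x,y)=\blf{b}(Sy,Sx)$ with the isometric involution $S(\vw,v,\nu)=(\vw,-v,-\nu)$. This identity does hold term by term. The two stabilisation terms combine into $\sum_\pm\tau\langle v^\pm-\nu,(v')^\pm-\nu'\rangle$, which is symmetric, and all remaining couplings between $\vw$ and $(v,\nu)$ are skew. Hence \eqref{eq:HDD:inf-sup:2} follows from \eqref{eq:HDD:inf-sup:1}. You in fact get more than is stated: the transposed inf-sup condition with the same $\gamma$, not just qualitative non-degeneracy. Since $\wideparen{\blf{b}}$ has the same symmetric/skew structure, the same observation also settles \eqref{eq:HDD:Galerkin:inf-sup:2} without a separate argument.
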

\begin{proof}
  We start to show~\eqref{eq:HDD:inf-sup:2} by considering $(\vw', v', \nu') \in \cW_\tau \times \cQ_{\tau/(1+\tau)} \times L^2(\Gamma) \setminus (\zerobf, 0, 0)$ such that
  \begin{align*}
    0 = \blf{b} ( (\vw, v, \nu), (\vw', v', \nu')) \quad
    \forall (\vw, v, \nu) \in \cW_\tau \times \cQ_{\tau/(1+\tau)} \times L^2(\Gamma)\ .
  \end{align*}
  First, choosing $(\vw, v, \nu) = (\vw', v', \nu')$ we find
  \begin{align*}
     0 = \|\tfrac{1}{\sqrt{\kappa}} \vw'\|_{L^2(\OmG)}^2 + \sum\nolimits_{\pm} \tau \| (v')^\pm - \nu'\|_{L^2(\Omega)}^2,
  \end{align*}
   and so $\vw' = \zerobf$ and $\sqrt{\tau}(v')^\pm = \sqrt{\tau}\nu'$.\\
   Now, choosing $(\vw, v, \nu) = (\vw, 0, 0)$, that satisfies $\jump{\vw\cdot\vn} = \nu'$, $\Div\vw = (v')^\circ$ which is possible by Lemma~\ref{lem:HDD:deRham}, we find that
   \begin{align*}
      0 = -\|(v')^\circ\|_{L^2(\OmG)}^2 - \|\nu'\|_{L^2(\OmG)}^2
   \end{align*}
   and so $(v')^\circ = 0$ and $\nu' = 0$. The latter implies $\sqrt{\tau}(v')^\pm = 0$, hence, $(\vw', v', \nu') = (\zerobf, 0, 0)$. This is a contradiction and so~\eqref{eq:HDD:inf-sup:2} holds.
  To show~\eqref{eq:HDD:inf-sup:1}, we prove that any $(\vw, v, \nu) \in \cW_\tau \times \cQ_{\tau/(1+\tau)} \times L^2(\Gamma)$ solving
  \begin{multline}
    \label{eq:HDD:inf-sup:proof:1}
    \blf{b} ((\vw, v, \nu), (\vw', v', \nu')) =
     \left< \vf_\vq, \vw\right>_{\cW_\tau} +
     \left<f_u, v\right>_{\cQ_{\tau/(1+\tau)}} +
     \left<f_\mu, \nu\right> \quad \\
     \forall (\vw', v', \nu')\in \cW_\tau \times \cQ_{\tau/(1+\tau)} \times L^2(\Gamma)
  \end{multline}
  satisfies with a constant $C$ independent of $\tau$ that
  \begin{align}
    \label{eq:HDD:inf-sup:proof:2}
      \|\vq\|_{\cW_\tau} + \|u\|_{\cQ_{\tau/(1+\tau)}} + \|\mu\|_{L^2(\Gamma)}
      \leq C \left( \|\vf_\vq\|_{\cW_\tau'} + \|f_u\|_{\cQ_{\tau/(1+\tau)|}} + \|f_\mu\|_{L^2(\Gamma)}\right).
   \end{align}
  We have divided the proof of~\eqref{eq:HDD:inf-sup:proof:2} into five steps.
  \smallskip
  \begin{enumerate}
   \item By Lemma~\ref{lem:HDD:deRham} we can find $\vw' \in \cW_\tau$ for which
  \begin{align}
    \jump{\vw'\cdot\vn} &= \nu \text{ on } \Gamma & \text{ and } &&
    \Div \vw' &= v^\circ\text{ in } \OmG.
  \end{align}
  Using $(\vw',0,0)$ as test function in~\eqref{eq:HDD:inf-sup:proof:1} we find
  \begin{align}
     \nonumber
     \| v^\circ \|_{L^2(\OmG)}^2 + \| \nu\|_{L^2(\Gamma)}^2
     &= -\left< \tfrac{1}{\kappa} \vw, \vw'\right> + \|\vf_\vq\|_{\cW_\tau'} \|\vw'\|_{\cW_\tau}\\
     &\leq C \left(\|\vw\|_{L^2(\OmG)} \|\vw' \|_{L^2(\OmG)} + \|\vf_\vq\|_{\cW_\tau'} \|\vw'\|_{\cW_\tau}\right).
     \label{eq:inf-sup:proof:u0_mu_by_q_fq}
  \end{align}
   and so using~\eqref{eq:HDD:deRham:w:2}
   \begin{align}
     \label{eq:HDD:inf-sup:proof:u0_mu_by_q}
      \| v^\circ \|_{L^2(\OmG)} + \| \nu\|_{L^2(\Gamma)} \leq C \|\vw\|_{L^2(\OmG)}
      + \| \vf_\vq \|_{\cW_\tau'} \ .
   \end{align}
  \item Now, testing \eqref{eq:HDD:inf-sup:proof:1} with $(\vq, u, \mu)$
  we find that
  \begin{multline}
	\tfrac{1}{\kappa_\mathrm{sup}}\| \vw\|_{L^2(\OmG)}^2 + \sum\nolimits_\pm \tau \| v^\pm - \nu \|_{L^2(\Gamma)}^2\\
	\leq \|\vf_\vq\|_{\cW_\tau'} \|\vw\|_{\cW_\tau}
	+ \| f_u \|_{\cQ_{\tau/(1+\tau)}} \| v \|_{\cQ_{\tau/(1+\tau)}}
	+ \| f_\mu \|_{L^2(\Gamma)} \| \nu \|_{L^2(\Gamma)}
  \end{multline}
  \item Since $\Div\vw \in \cQ_\tau = \cQ_\tau'$ we can assert that
  \begin{align*}
     \|\Div \vw\|_{\cQ_\tau} %
     &= \sup_{v'\in \cQ_{\tau} \setminus \{ 0 \}} \frac{\left<\Div \vw, v'\right>}{\|v'\|_{\cQ_\tau}}\ .
   \end{align*}
  Now, inserting~\eqref{eq:HDD:inf-sup:proof:1} for $(\vw', v', \nu') = (\zerobf, v', 0)$ we find
  \begin{align}
    \begin{aligned}
     \|\Div \vw\|_{\cQ_\tau} &=
     \sup_{v' \in \cQ_\tau \setminus \{ 0 \}} \frac{ \left<-f_u, v'\right>_{\cQ_{\tau}} + \sum_\pm \tau\left<(v^\pm - \nu), (v')^\pm\right>}{\sqrt{\|(v')^\circ\|_{L^2(\OmG)}^2 + \sum_\pm \|\sqrt{\tau}(v')^\pm\|_{L^2(\Gamma)}^2}}\\
     &\leq \|f_u\|_{\cQ_\tau} + \sum\nolimits_{\pm} \sqrt{\tau}\|v^\pm - \nu\|_{L^2(\Gamma)}
     \ .
     \end{aligned}
     \label{eq:HDD:inf-sup:proof:divq}
  \end{align}
  \item As $\jump{\cW_\tau\cdot \vn} \subseteq L^2(\Gamma)$ by assumption we can choose
  $(\zerobf, 0, -\tfrac{1}{\sqrt{1+\tau}}\jump{\vw\cdot \vn})$ as test function in~\eqref{eq:HDD:inf-sup:proof:1} and obtain
  with~\eqref{eq:HDD:inf-sup:proof:divq}
  \begin{align}
     \nonumber
     \tfrac{1}{\sqrt{1+\tau}}
     \| \jump{\vw\cdot \vn}\|_{L^2(\Gamma)}
     &\leq \tfrac{1}{\sqrt{1+\tau}} \left( \tau \| v^\pm - \nu \|_{L^2(\Gamma)} + \|f_u\|_{\cQ_\tau} \right) \\
     &\leq \sqrt{\tau} \| v^\pm - \nu \|_{L^2(\Gamma)} + \|f_u\|_{\cQ_\tau}.
  \end{align}
  \item Using the triangle inequality we find
  \begin{align}
     \label{eq:HDD:inf-sup:proof:upm_by_upm-mu_mu}
	    \tfrac{\sqrt{\tau}}{\sqrt{1+\tau}} \| v^\pm\|_{L^2(\Gamma)}
	    &\leq \tfrac{\sqrt{\tau}}{\sqrt{1+\tau}} \| v^\pm - \nu\|_{L^2(\Gamma)}
	    + \tfrac{\sqrt{\tau}}{\sqrt{1+\tau}}  \| \nu\|_{L^2(\Gamma)}\ .
	\end{align}
	\end{enumerate}
   Combining~\eqref{eq:HDD:inf-sup:proof:u0_mu_by_q}--\eqref{eq:HDD:inf-sup:proof:upm_by_upm-mu_mu} we find~\eqref{eq:HDD:inf-sup:proof:2}.
\end{proof}

\begin{theorem}[Well-posedness]
  \label{lem:HDD:Wellposedness}
  For any $f \in L^2(\OmG)$ there exists a unique solution $(\vq, u, \mu) \in \cW_\tau \times \cQ_{\tau/(1+\tau)} \times L^2(\Gamma)$ of~\eqref{HDD:eq:var} for which $(\Div \vq)^\pm = \jump{\vq\cdot\vn} = [u] = 0$ and $\mu = u^\pm$ on $\Gamma$, and it holds with a constant $C$ independent of $\tau$ that
  \begin{align}
    \label{eq:HDD:Wellposedness}
     \|\vq\|_{\cW_\tau} + \|u\|_{\cQ_{\tau/(1+\tau)}} + \|\mu\|_{L^2(\Gamma)} \leq C \|f\|_{L^2(\OmG)}.
  \end{align}
  Moreover, $(\vq, u, \mu)$ coincides with $(\kappa\Grad \mathrm{u}, \mathrm{u} + \sum_\pm \sqrt{\tau} \mathrm{u}|_\Gamma \circ (\jmath^\pm)^{-1}, \mathrm{u}|_\Gamma)\in \mathcal{W}_\tau \times \mathcal{Q}_{\tau/(1+\tau)} \times L^2(\Gamma)$, where $\mathrm{u} \in H^1_0(\Omega)$ is the solution of the primal formulation
  \begin{align}
    \label{eq:PrimalFormulation}
		\int_\Omega \kappa \Grad \mathrm{u} \cdot \Grad \mathrm{v} \, \mathrm{d}\bvec{x} = \int_\Omega f\mathrm{v} \,\mathrm{d}\bvec{x}\quad \forall \mathrm{v}\in H^1_0(\Omega).
  \end{align}
\end{theorem}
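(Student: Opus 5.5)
Existence, uniqueness and the $\tau$-independent bound~\eqref{eq:HDD:Wellposedness} follow from the Babu\v{s}ka--Ne\v{c}as (Banach--Ne\v{c}as--Babu\v{s}ka) theorem applied to~\eqref{eq:HDD:var:with_bf}. The bilinear form $\blf{b}$ is continuous on $\cW_\tau \times \cQ_{\tau/(1+\tau)} \times L^2(\Gamma)$, and Proposition~\ref{lem:HDD:inf-sup} supplies both inf-sup conditions with $\gamma$ independent of $\tau$. The right-hand side $(\vw,v,\nu)\mapsto \left<f,v^\circ\right>$ is a bounded functional with norm at most $\|f\|_{L^2(\OmG)}$, since $\|v^\circ\|_{L^2(\OmG)}\le \|v\|_{\cQ_{\tau/(1+\tau)}}$. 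Hence a unique solution exists and satisfies $\pnorm{(\vq,u,\mu)}\le \gamma^{-1}\|f\|_{L^2(\OmG)}$, which is~\eqref{eq:HDD:Wellposedness}.

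For the structural identities and the identification with the primal solution, I would not analyse the solution directly but verify a candidate and then invoke uniqueness. Let $\mathrm{u}\in H^1_0(\Omega)$ solve~\eqref{eq:PrimalFormulation}; it exists by Lax--Milgram since $\kappa$ is bounded above and below. Define
\[
\vq=\kappa\Grad\mathrm{u},\qquad u=\mathrm{u}+\sum\nolimits_\pm\sqrt{\tau}\,\mathrm{u}|_\Gamma\circ(\jmath^\pm)^{-1},\qquad \mu=\mathrm{u}|_\Gamma,
\]
so that $u^\circ=\mathrm{u}$ and $u^\pm=\mu$. Since $\mathrm{u}\in H^1_0(\Omega)$, the trace $\mathrm{u}|_\Gamma\in H^{1/2}(\Gamma)\subset L^2(\Gamma)$ is single-valued and $u\in\cQ_{\tau/(1+\tau)}$. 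Next, $\vq\in H(\Div,\Omega)$ with $-\Div\vq=f$ in the distributional sense on $\Omega$. Therefore $\Div\vq=-f\in L^2(\OmG)\subset\cQ_\tau$, its $\Gamma$-components vanish, $(\Div\vq)^\pm=0$, and $\jump{\vq\cdot\vn}=0$ in $H^{-1/2}(\Gamma)$, hence in $L^2(\Gamma)$. This shows $\vq\in\cW_\tau$ and matches~\eqref{HDD:eq:divvq:decomp}.

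Now insert the candidate into~\eqref{HDD:eq:var}.
\begin{enumerate}
\item Equation~\eqref{HDD:eq:var:3} holds trivially, since $\jump{\vq\cdot\vn}=0$, $(\Div\vq)^\pm=0$ and $u^\pm-\mu=0$.
\item Equation~\eqref{HDD:eq:var:2} reduces to $-\left<\Div\vq,v^\circ\right>=\left<f,v^\circ\right>$, which holds.
\item Equation~\eqref{HDD:eq:var:1} requires, using $u^\pm=\mu$,
\[
\left<\tfrac1\kappa\vq,\vw\right>+\left<\mathrm{u},(\Div\vw)^\circ\right>+\sum\nolimits_\pm\tau\left<\mu,(\Div\vw)^\pm\right>+\left<\mu,\jump{\vw\cdot\vn}\right>-\sum\nolimits_\pm\tau\left<\mu,(\Div\vw)^\pm\right>=0 .
\]
The $\tau$-terms cancel exactly; this is the purpose of the modification of~\eqref{HDD:eq:HDD1}. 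What remains is $\int_\OmG \Grad\mathrm{u}\cdot\vw+\mathrm{u}\,(\Div\vw)^\circ\,\mathrm{d}\vx+\int_\Gamma \mathrm{u}\jump{\vw\cdot\vn}\,\mathrm{d}\sigma=0$.
\end{enumerate}

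This last identity is the piecewise Green formula on each $\Omega_i$, combined with $\mathrm{u}=0$ on $\partial\Omega$ and the single-valuedness of $\mathrm{u}|_\Gamma$. The sign in front of the jump term depends on the orientation convention for $\jump{\cdot}$, which must be the same one used to derive~\eqref{HDD:eq:HDD1}.

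The main obstacle is justifying this Green formula for general $\vw\in\cW_\tau$, where $\Div\vw$ carries $L^2(\Gamma)$-distributions on the shifted interfaces $\Gamma^\pm$. I would first apply it to the smooth dense class used to define $\cQ_\tau$, and then argue by continuity. The key point is that the patchwise divergence entering Green's formula on $\Omega_i$ is exactly $(\Div\vw)^\circ$, while the components $(\Div\vw)^\pm$ were already cancelled. I also need $\jump{\vw\cdot\vn}\in L^2(\Gamma)$ paired with $\mathrm{u}|_\Gamma\in H^{1/2}(\Gamma)$, which is well defined.

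Once the candidate is verified, uniqueness from the first part gives $(\vq,u,\mu)$ equal to it. This yields $(\Div\vq)^\pm=\jump{\vq\cdot\vn}=[u]=0$ and $\mu=u^\pm$ on $\Gamma$, as claimed.
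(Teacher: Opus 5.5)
Your proposal is correct and follows the paper's proof essentially step for step. You obtain existence, uniqueness and the $\tau$-independent bound from the inf-sup conditions of Proposition~\ref{lem:HDD:inf-sup}, verify that the candidate built from the primal solution $\mathrm{u}$ satisfies each equation of~\eqref{HDD:eq:var} (the $\tau$-terms in~\eqref{HDD:eq:var:1} cancel and the remainder follows from patchwise integration by parts), and conclude by uniqueness; the Green-formula step does not need a density argument via $\cQ_\tau$, since the paper treats $\cW_\tau\subset H(\Div,\OmG)$ with $(\Div\vw)^\circ$ as the patchwise divergence, so the standard $H(\Div,\Omega_i)\times H^1(\Omega_i)$ formula applies directly.
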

\begin{proof}
  The well-posedness of~\eqref{HDD:eq:var} and~\eqref{eq:HDD:Wellposedness} follows directly as the continuous bilinear form $\blf{b}$ fulfills the inf-sup conditions~\eqref{eq:HDD:inf-sup}.
  Since $L^2(\Omega) = L^2(\OmG)$ and so $f \in L^2(\Omega)$ the Lax-Milgram lemma implies the well-posedness of~\eqref{eq:PrimalFormulation}.
  Now, we verify that $(\vq, u, \mu) = (\kappa\Grad \mathrm{u}, \mathrm{u} + \sum_\pm \sqrt{\tau} \mathrm{u}|_\Gamma \circ (\jmath^\pm)^{-1}, \mathrm{u}|_\Gamma)$ with $(\Div \vq)^\pm = 0$ on $\Gamma$ is a solution to~\eqref{HDD:eq:var}.
  By definition it holds $u^\pm = \mu = \mathrm{u}|_\Gamma$. And as for solution $\mathrm{u}$ of the primal formulation it holds $\jump{\kappa\Grad\mathrm{u}\cdot\vn} = 0$ on $\Gamma$ and so $\jump{\vq\cdot\vn} = 0$ and~\eqref{HDD:eq:var:3} is satisfied.
  As $(\Div \vq)^\pm = 0$ and $u^\pm = \mu$ it suffices to verify~\eqref{HDD:eq:var:2} for test functions $v \in L^2(\OmG)$ for which $v^\pm = 0$ on $\Gamma$. Since~\eqref{eq:PrimalFormulation} implies $-\Div\kappa\Grad\mathrm{u} = f \in L^2(\OmG)$ we can assert that $-\Div\vq = f \in L^2(\OmG)$ and so~\eqref{HDD:eq:var:2} for all $v \in L^2(\OmG)$.
  Finally, since $\sqrt{\tau}\mu = \sqrt{\tau}u^\pm$ it remains to show that
  \begin{align*}
     \left<\tfrac{1}{\kappa}\vq, \vw\right> + \left<u, \Div\vw\right> + \left<\mu, \jump{\vw\cdot\vn}\right> = 0 \quad \forall \vw \in \cW_\tau\ .
  \end{align*}
  With $\tfrac{1}{\kappa} \vq = \Grad u$ in $\OmG$ and $\mu = \tr^\pm u$ on $\Gamma$ the equation is satisfied
  by the patch-wise integration by parts formula. Hence, the above equation and so~\eqref{HDD:eq:var:1} is fulfilled.
  Hence, we have constructed the unique solution of the HDD formulation~\eqref{HDD:eq:var}, for which $(\Div \vq)^\pm = \jump{\vq\cdot\vn} = 0$ and $\mu = u^\pm$ on $\Gamma$ hold. This completes the proof.
\end{proof}

\section{Galerkin discretization}
\label{sec:Galerkin}
In this section we introduce a perturbed Galerkin formulation of the HDD formulation introduced in Sec.~\ref{sec:HDD}. %

\subsection{Function spaces for the Galerkin formulation}
In this subsection we aim to introduce well-chosen subspaces $\wideparen{\cW} \times \wideparen{\cQ} \times \wideparen{\cM} \subseteq \cW_\tau \times \cQ_{\tau/(1+\tau)} \times L^2(\Gamma)$ for the perturbed Galerkin formulation, which will allow us to analyze it in a conforming setting with a consistency error.
Here, we also may omit that $\wideparen{v}^\pm$ and $\sqrt{\tau}\wideparen{v}^\pm$ in $\wideparen{v} = \wideparen{v}^\circ + \sum_\pm\sqrt{\tau}\wideparen{v}^\pm \circ (\jmath^\pm)^{-1}\in \wideparen{\cQ}$ are independent variables. %
Especially, it will be interesting to consider the space $\wideparen{\cQ} = \wideparen{\cQ}^\circ + \sum_\pm\sqrt{\tau}\pitr^\pm \wideparen{\cQ}^\circ$, where $\wideparen{\cQ}^\circ$ is a subspace of $L^2(\OmG)$ with $L^2(\Gamma)$ traces on~$\Gamma$ that are projected onto $\wideparen{\cM}$.
The subspace $\wideparen{\cW}$ is chosen such that $(\Div\wideparen{\cW})^\circ \subseteq \wideparen{\cQ}^\circ$
with an additional condition on $(\Div \wideparen{\cW})^\pm$ on $\Gamma$ to obtain
\begin{align}
   \| \Div \wideparen{\vw} \|_{\cQ_\tau}
   = \sup_{v \in \cQ_\tau \setminus \{ 0 \}} \frac{\left<\Div \wideparen{\vw}, v\right>_{\cQ_\tau}}{\|v\|_{\cQ_\tau}} =
   \sup_{\wideparen{v} \in \wideparen{\cQ} \setminus \{ 0 \}} \frac{\left<\Div \wideparen{\vw}, \wideparen{v}\right>}{\|\wideparen{v}\|_{\cQ_\tau}} =: \|\Div \wideparen{\vw}\|_{\wideparen{Q}'} \ .
   \label{eq:wideparencQ:dual:norm}
\end{align}
In this way, the stability estimate on $\| \Div \wideparen{\vw} \|_{\cQ_\tau}$ in the inf-sup conditions can be proven in Proposition~\ref{lem:HDD:Galerkin:inf-sup} similarly to $\| \Div \vw \|_{\cQ_\tau}$ in the proof of Proposition~\ref{lem:HDD:inf-sup}.
For the equality~\eqref{eq:wideparencQ:dual:norm} to hold we cannot simply choose $\sqrt{\tau}(\Div \wideparen{\cW})^\pm = 0$ on $\Gamma$, where ~\eqref{eq:wideparencQ:dual:norm} could only hold with the $L^2(\OmG)$-norm of $\wideparen{v}$ in the denominator.
More precisely, we seek the divergence $\Div \wideparen{\vw}$ in the Banach space adjoint
\begin{align}
  \wideparen{\cQ}' := \left.\left\{ \varphi \in \cQ_\tau : \varphi^\circ \in \wideparen{\cQ}^\circ,
  \left<\varphi, v\right>_{\cQ_\tau} =
  \begin{cases}
     \left<\varphi, v\right>, & v \in \wideparen{\cQ}, \\
     0, & v \in \wideparen{\cQ}^\perp
  \end{cases}
  \right\}\right\}\ ,
  \label{eq:wideparencQ:dual}
\end{align}
which is defined using the orthogonal decomposition $\cQ_\tau = \wideparen{\cQ} \oplus \wideparen{\cQ}^\perp$.
In this way,
\begin{subequations}
\label{eq:wideparencQ:dual:divq_paren}
\begin{align}
   \sum_\pm \tau \big<(\Div \wideparen{\vw})^\pm, \wideparen{v}^\pm\big> &= 0 \qquad \hspace{6.2em}\forall \wideparen{v} \in \wideparen{Q}\ .
   \label{eq:wideparencQ:dual:1}
   \intertext{However, $\sqrt{\tau}(\Div \wideparen{\vq})^\pm \neq 0$ in general for $\tau > 0$, since}
   \sum_\pm \tau \big<(\Div \wideparen{\vw})^\pm, (\wideparen{v}^\bot)^\pm\big> &= -\big< (\Div \wideparen{\vq})^\circ, (\wideparen{v}^\bot)^\circ\big> \qquad \forall \wideparen{v}^\bot \in \wideparen{Q}^\perp\ ,
   \label{eq:wideparencQ:dual:2}
\end{align}
\end{subequations}
and the $L^2(\OmG)$ and $L^2(\Gamma)$ contributions are linked together.

\subsection{The Galerkin formulation}
Replacing the solution space $\cW_\tau \times \cQ_{\tau/(1+\tau)} \times L^2(\Gamma)$ in~\eqref{HDD:eq:var}
by $\wideparen{\cW} \times \wideparen{\cQ} \times \wideparen{\cM}$, we obtain a Galerkin formulation in which,
due to the definition of $\wideparen{\cQ}^\prime$ in~\eqref{eq:wideparencQ:dual}, the $\cQ_\tau$ duality products are equal to the corresponding $L^2(\OmG)$ duality products.
Since in general $\sqrt{\tau}\wideparen{\cM} \nsubseteq \sqrt{\tau}\wideparen{\cQ}^\pm$, the other divergence terms on $\Gamma$ would not vanish in a Galerkin formulation.
As in a finite element formulation those terms would be very impractical, we prefer to consider and analyze a perturbed Galerkin formulation, in which these terms are not present: Seek $(\wideparen{\vq}, \wideparen{u}, \wideparen{\mu}) \in \wideparen{\cW} \times \wideparen{\cQ} \times \wideparen{\cM}$ such that
\begin{subequations}\label{dHDD:eq:var}
\begin{align}
    \label{dHDD:eq:var:1}
	\left<\tfrac{1}{\kappa}\wideparen{\vq}, \wideparen{\vw}\right> + \left< \wideparen{u}, \Div \wideparen{\vw}\right> + \left<\wideparen{\mu}, \jump{\wideparen{\vw}\cdot\vn}\right> &= 0
	\quad \hspace{2.3em}\forall \wideparen{\vw} \in \wideparen{\cW}, \\[0.5em]
	\label{dHDD:eq:var:2}
	-\left<\Div \wideparen{\vq}, \wideparen{v}\right> +
	\sum\nolimits_\pm \tau \left< \wideparen{u}^\pm - \wideparen{\mu}, \wideparen{v}^\pm \right> &= \left<f, \wideparen{v}^\circ\right>
	\quad \forall \wideparen{v} \in \wideparen{\cQ},\\[0.5em]
	\label{dHDD:eq:var:3}
	-\left<\jump{\wideparen{\vq}\cdot\vn}, \wideparen{\nu}\right> - \sum\nolimits_\pm \tau \left< \wideparen{u}^\pm - \wideparen{\mu}, \wideparen{\nu} \right> &= 0
	\quad \hspace{2.4em}\forall \wideparen{\nu} \in \wideparen{\cM}.
\end{align}
\end{subequations}

\subsection{Well-posedness}
We denote by $\wideparen{\blf{b}}: (\wideparen{\cW} \times \wideparen{\cQ} \times \wideparen{\cM}) \times (\wideparen{\cW} \times \wideparen{\cQ} \times \wideparen{\cM})$ the bilinear form on the product space obtained as sum of the left hand sides of~\eqref{dHDD:eq:var}, which is again continuous.
Hence,~\eqref{dHDD:eq:var} is equivalent to: Seek $(\wideparen{\vq}, \wideparen{u}, \wideparen{\mu}) \in \wideparen{\cW} \times \wideparen{\cQ} \times \wideparen{\cM}$ such that
\begin{align}
  \label{HDD:Galerkin:general}
     \wideparen{\blf{b}}((\wideparen{\vq}, \wideparen{u}, \wideparen{\mu}), (\wideparen{\vw}, \wideparen{v}, \wideparen{\nu})) =
     \left<f, \wideparen{v}^\circ\right>
     \quad \forall
     (\wideparen{\vw}, \wideparen{v}, \wideparen{\nu}) \in \wideparen{\cW} \times \wideparen{\cQ} \times \wideparen{\cM}.
\end{align}

To show well-posedness we prove, first, the auxiliary result.
\begin{lemma}
  \label{lem:HDD:Galerkin:deRham}
   Let $\wideparen{\cW} \subseteq \cW_\tau$, $\wideparen{\cQ} \subseteq \cQ_{\tau/(1+\tau)}$, with  $\wideparen{\cQ}^\circ \subseteq (\Div \wideparen{\cW})^\circ$ and $\wideparen{\cM} = \jump{\wideparen{\cW} \cdot \vn}$, let there be a projection $\wideparen{\Pi}$ of $(H^{\nicefrac12}(\OmG))^d \cap H(\Div,\OmG)$ onto $\wideparen{\cW}$ such that
   \begin{subequations}
   \begin{align}
      \left< \Div(\vw - \wideparen{\Pi} \vw)^\circ, (\wideparen{v}')^\circ\right> &= 0 \quad \forall (\wideparen{v}')^\circ \in \wideparen{\cQ}^\circ,\\
      \left< \jump{(\vw - \wideparen{\Pi} \vw)\cdot \vn}, \jump{\wideparen{\vw}'\cdot\vn}\right> &= 0 \quad \forall \wideparen{\vw}' \in \wideparen{\cW},
   \end{align}
   \end{subequations}
   with $\|\wideparen{\Pi} \vw\|_{L^2(\OmG)} \leq C \left(\| \vw \|_{H^{\nicefrac12}(\OmG)} + \| (\Div \vw)^\circ \|_{L^2(\OmG)}\right)$.
   Then, for any $(\wideparen{v}^\circ, \wideparen{\nu}) \in \wideparen{\cQ}^\circ \times \wideparen{\cM}$
   there exists $\wideparen{\vw} \in \wideparen{\cW}$ with
   \begin{subequations}
   \label{eq:HDD:deRham:wparen}
   \begin{align}
   \label{eq:HDD:deRham:wparen:1}
   \jump{\wideparen{\vw}\cdot\vn} &= \wideparen{\nu} \text{ on } \Gamma & \text{ and } &&
    (\Div \wideparen{\vw})^\circ &= \wideparen{v}^\circ\text{ in } \OmG\ ,
    \end{align}
    for which with a constant $C$ independent of $\tau$ it holds
    \begin{align}
       \label{eq:HDD:deRham:wparen:2}
       \|\wideparen{\vw}\|_{L^2(\OmG)} \leq C \left( \| \wideparen{v}^\circ\|_{L^2(\OmG)} + \| \wideparen{\nu} \|_{L^2(\Gamma)}\right)\ .
    \end{align}
    \end{subequations}
\end{lemma}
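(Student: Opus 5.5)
The plan is a Fortin-type argument: build the continuous field with Lemma~\ref{lem:HDD:deRham} and then pull it into $\wideparen{\cW}$ with the projection $\wideparen{\Pi}$. Let $(\wideparen{v}^\circ, \wideparen{\nu}) \in \wideparen{\cQ}^\circ \times \wideparen{\cM}$ be given. Since $\wideparen{\cQ}^\circ \subseteq L^2(\OmG)$ and $\wideparen{\cM} \subseteq L^2(\Gamma)$, Lemma~\ref{lem:HDD:deRham} applies with $(v^\circ,\nu) = (\wideparen{v}^\circ,\wideparen{\nu})$. It gives $\vw \in \cW_\tau$ with $\jump{\vw\cdot\vn} = \wideparen{\nu}$ and $\Div\vw = \wideparen{v}^\circ$ in $\OmG$. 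It also gives $\|\vw\|_{H^{\nicefrac12}(\OmG)} \leq C(\|\wideparen{v}^\circ\|_{L^2(\OmG)} + \|\wideparen{\nu}\|_{L^2(\Gamma)})$. In particular $\vw \in (H^{\nicefrac12}(\OmG))^d \cap H(\Div,\OmG)$, because the construction was $\vw = \nabla z$ with $\Delta z = \wideparen{v}^\circ \in L^2$. So $\wideparen{\Pi}\vw$ is defined, and I set $\wideparen{\vw} := \wideparen{\Pi}\vw \in \wideparen{\cW}$.

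\textbf{Divergence.} By assumption $(\Div\wideparen{\vw})^\circ \in (\Div\wideparen{\cW})^\circ$, and I read the hypotheses as giving $(\Div\wideparen{\cW})^\circ \subseteq \wideparen{\cQ}^\circ$, as in the design of the subspaces discussed before~\eqref{eq:wideparencQ:dual:norm}. Hence $(\Div\wideparen{\vw})^\circ - \wideparen{v}^\circ$ lies in $\wideparen{\cQ}^\circ$. The first orthogonality property of $\wideparen{\Pi}$ says this difference is $L^2(\OmG)$-orthogonal to $\wideparen{\cQ}^\circ$. Testing with the difference itself therefore gives $(\Div\wideparen{\vw})^\circ = \wideparen{v}^\circ$.

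\textbf{Normal jump.} Since $\wideparen{\cM} = \jump{\wideparen{\cW}\cdot\vn}$, the difference $\jump{\wideparen{\vw}\cdot\vn} - \wideparen{\nu}$ lies in $\wideparen{\cM}$, so it can be written as $\jump{\wideparen{\vw}'\cdot\vn}$ for some $\wideparen{\vw}' \in \wideparen{\cW}$. The second orthogonality property, tested with this $\wideparen{\vw}'$, gives $\|\jump{\wideparen{\vw}\cdot\vn} - \wideparen{\nu}\|_{L^2(\Gamma)}^2 = 0$. This establishes~\eqref{eq:HDD:deRham:wparen:1}.

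\textbf{Bound.} The stability assumption on $\wideparen{\Pi}$ combined with~\eqref{eq:HDD:deRham:w:2} and $\|(\Div\vw)^\circ\|_{L^2(\OmG)} = \|\wideparen{v}^\circ\|_{L^2(\OmG)}$ yields
\begin{align*}
\|\wideparen{\vw}\|_{L^2(\OmG)} \leq C\left(\|\vw\|_{H^{\nicefrac12}(\OmG)} + \|\wideparen{v}^\circ\|_{L^2(\OmG)}\right) \leq C\left(\|\wideparen{v}^\circ\|_{L^2(\OmG)} + \|\wideparen{\nu}\|_{L^2(\Gamma)}\right).
\end{align*}
This is~\eqref{eq:HDD:deRham:wparen:2}. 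The constants do not depend on $\tau$, since neither Lemma~\ref{lem:HDD:deRham} nor $\wideparen{\Pi}$ involves $\tau$.

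\textbf{Main obstacle.} The delicate point is the divergence identity. It needs $(\Div\wideparen{\Pi}\vw)^\circ$ to land in $\wideparen{\cQ}^\circ$, i.e.\ the inclusion $(\Div\wideparen{\cW})^\circ \subseteq \wideparen{\cQ}^\circ$, which is the commuting-diagram property of the Raviart--Thomas setting. As literally stated, the lemma assumes the reverse inclusion $\wideparen{\cQ}^\circ \subseteq (\Div\wideparen{\cW})^\circ$, so I would make the intended equality $(\Div\wideparen{\cW})^\circ = \wideparen{\cQ}^\circ$ explicit. If only the stated inclusion is available, I would fall back on the weaker conclusion $P_{\wideparen{\cQ}^\circ}(\Div\wideparen{\vw})^\circ = \wideparen{v}^\circ$, where $P_{\wideparen{\cQ}^\circ}$ denotes the $L^2(\OmG)$-orthogonal projection onto $\wideparen{\cQ}^\circ$; this is all that is needed when testing the Galerkin form against $\wideparen{\cQ}^\circ$.
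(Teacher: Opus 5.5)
Your proposal is correct and follows the paper's proof. The paper likewise takes the solution $z$ of the auxiliary problem from Lemma~\ref{lem:HDD:deRham}, sets $\wideparen{\vw} = \wideparen{\Pi}\nabla z$, and combines the orthogonality and stability properties of $\wideparen{\Pi}$ with the $H^{\nicefrac32}$-regularity of $z$. Your remark on the inclusion is well taken: the paper's proof tacitly uses $(\Div\wideparen{\cW})^\circ \subseteq \wideparen{\cQ}^\circ$, not the inclusion written in the lemma, and this needed inclusion holds in the paper's setting (stated in the text before~\eqref{eq:wideparencQ:dual:norm}, implied by the hypothesis $\Div\wideparen{\cW}\subseteq\wideparen{\cQ}'$ of Proposition~\ref{lem:HDD:Galerkin:inf-sup}, and true for the finite element spaces since $\Div W^q(\cT_h) = Q^q(\cT_h)$).
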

\begin{proof}
  First, we note that for any $\wideparen{\vw}$ fulfilling~\eqref{eq:HDD:deRham:wparen:1} the projection $\wideparen{\Pi}\wideparen{\vw}$ fulfills $\jump{\wideparen{\Pi}\wideparen{\vw}\cdot\vn} = \jump{\wideparen{\vw}\cdot\vn} = \wideparen{\nu}$ and $\Div \wideparen{\Pi}\wideparen{\vw} =
  \Div \wideparen{\vw} = \wideparen{v}^\circ$.
  The statement is proven for a particular $\wideparen{\vw}$. For this we let $z \in H^1(\OmG)$ be the
  unique solution of~\eqref{eq:HDD:deRham:w:proof:var_for_z}, that fulfills
  \begin{align}
     \|z\|_{H^{\nicefrac32}(\OmG)} \leq C \left(\| \wideparen{v}^\circ \|_{L^2(\OmG)} + \| \wideparen{\nu}\|_{L^2(\Gamma)} \right)\ .
  \end{align}
  and $\wideparen{\vw}= \wideparen{\Pi} \nabla z$ fulfills the conditions~\eqref{eq:HDD:deRham:wparen:1}.
  Using the bound on the projection operator $\wideparen{\Pi}$, we find that
  \begin{align}
    \begin{aligned}
     \|\wideparen{\vw}\|_{L^2(\OmG)} &\leq C \big(\| \nabla z \|_{H^{\nicefrac12}(\OmG)} + \|\Delta z\|_{L^2(\OmG)}\big) \\
     &\leq C \left(\| \wideparen{v}^\circ \|_{L^2(\OmG)} + \| \wideparen{\nu}\|_{L^2(\Gamma)} \right)\ ,
     \end{aligned}
  \end{align}
  and so~\eqref{eq:HDD:deRham:wparen:2}, which completes the proof.
\end{proof}

\begin{proposition}[inf-sup conditions] \label{lem:HDD:Galerkin:inf-sup}
  Let the assumptions of Lemma~\ref{lem:HDD:Galerkin:deRham} hold and let $\Div \wideparen{\cW}\subseteq \wideparen{\cQ}'$. Then, the bilinear form $\wideparen{\blf{b}}$ fulfills the inf-sup conditions
  \begin{subequations}
     \label{eq:HDD:Galerkin:inf-sup}
     \begin{align}
       \label{eq:HDD:Galerkin:inf-sup:1}
        \sup_{\substack{(\wideparen{\vw}', \wideparen{v}', \wideparen{\nu}') \in \\\wideparen{\cW} \times \wideparen{\cQ} \times \wideparen{\cM} \\\setminus \{ \zerobf, 0, 0)}}
        \frac{\wideparen{\blf{b}}((\wideparen{\vw}, \wideparen{v}, \wideparen{\nu}), (\wideparen{\vw}', \wideparen{v}', \wideparen{\nu}'))} {\|(\wideparen{\vw}', \wideparen{v}', \wideparen{\nu}'\|_{\wideparen{\cW} \times \wideparen{\cQ} \times \wideparen{\cM}}}
        &\geq \gamma
        \|(\wideparen{\vw}, \wideparen{v}, \wideparen{\nu}'\|_{\wideparen{\cW} \times \wideparen{\cQ} \times \wideparen{\cM}}
        \quad \\[-2em]
        \nonumber
        &\hspace{2em}\forall
        (\wideparen{\vw}, \wideparen{v}, \wideparen{\nu}) \in \wideparen{\cW} \times \wideparen{\cQ} \times \wideparen{\cM}\setminus \{ \zerobf, 0, 0)
        \\[0.5em]
        \label{eq:HDD:Galerkin:inf-sup:2}
        \sup_{\substack{(\wideparen{\vw}, \wideparen{v}, \wideparen{\nu}) \in \\ \wideparen{\cW} \times \wideparen{\cQ} \times \wideparen{\cM}}}
        \wideparen{\blf{b}}((\wideparen{\vw}, \wideparen{v}, \wideparen{\nu}), (\wideparen{\vw}', \wideparen{v}', \wideparen{\nu}'))
        &> 0 \quad \forall
        (\wideparen{\vw}', \wideparen{v}', \wideparen{\nu}') \in \wideparen{\cW} \times \wideparen{\cQ} \times \wideparen{\cM}\setminus \{ \zerobf, 0, 0)
     \end{align}
  \end{subequations}
  for some $\gamma > 0$ independent of $\tau$ that depends on the spaces only through the constant in the bound of the projection operator~$\wideparen{\Pi}$.
\end{proposition}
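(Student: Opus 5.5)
The plan is to mirror the proof of Proposition~\ref{lem:HDD:inf-sup} step by step on the discrete spaces. Lemma~\ref{lem:HDD:Galerkin:deRham} replaces Lemma~\ref{lem:HDD:deRham}, and the identity~\eqref{eq:wideparencQ:dual:norm}, which follows from $\Div\wideparen{\cW}\subseteq\wideparen{\cQ}'$, replaces the duality $\cQ_\tau=\cQ_\tau'$.

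For the injectivity condition~\eqref{eq:HDD:Galerkin:inf-sup:2}, assume $\wideparen{\blf{b}}((\wideparen{\vw},\wideparen{v},\wideparen{\nu}),(\wideparen{\vw}',\wideparen{v}',\wideparen{\nu}'))=0$ for all trial triples. Testing with the triple itself, the $\Div$ terms and the jump terms cancel by antisymmetry. This yields $\|\kappa^{-1/2}\wideparen{\vw}'\|^2+\sum_\pm\tau\|(\wideparen{v}')^\pm-\wideparen{\nu}'\|^2=0$. Then choose $\wideparen{\vw}$ from Lemma~\ref{lem:HDD:Galerkin:deRham} with $(\Div\wideparen{\vw})^\circ=(\wideparen{v}')^\circ$ and $\jump{\wideparen{\vw}\cdot\vn}=\wideparen{\nu}'$; this is admissible since $\wideparen{\cM}=\jump{\wideparen{\cW}\cdot\vn}$. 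We must check that $\left<(\wideparen{v}')^\pm,(\Div\wideparen{\vw})^\pm\right>$ contributions vanish. This holds by~\eqref{eq:wideparencQ:dual:1}, because $\Div\wideparen{\vw}\in\wideparen{\cQ}'$, so the $L^2$ pairing in~\eqref{dHDD:eq:var} only sees the $\circ$-parts. We conclude $(\wideparen{v}')^\circ=0$ and $\wideparen{\nu}'=0$, hence $\sqrt{\tau}(\wideparen{v}')^\pm=0$.

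For~\eqref{eq:HDD:Galerkin:inf-sup:1}, I would prove the a priori bound analogous to~\eqref{eq:HDD:inf-sup:proof:2} for the discrete problem with right-hand side functionals $(\vf_\vq,f_u,f_\mu)$ in the duals of the discrete spaces. This is done in the same five steps.
\begin{enumerate}
\item Test with $(\wideparen{\vw}',0,0)$ given by Lemma~\ref{lem:HDD:Galerkin:deRham}. Using~\eqref{eq:HDD:deRham:wparen:2}, this bounds $\|\wideparen{v}^\circ\|+\|\wideparen{\nu}\|$ by $C\|\wideparen{\vw}\|_{L^2}+\|\vf_\vq\|$.
\item Test with the solution itself. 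This bounds $\|\wideparen{\vw}\|_{L^2}^2+\sum_\pm\tau\|\wideparen{v}^\pm-\wideparen{\nu}\|^2$ by the data times the full norm.
\item Use~\eqref{eq:wideparencQ:dual:norm} to write $\|\Div\wideparen{\vw}\|_{\cQ_\tau}$ as a supremum over $\wideparen{v}'\in\wideparen{\cQ}$. Inserting the second equation gives $\|\Div\wideparen{\vw}\|_{\cQ_\tau}\le\|f_u\|+\sum_\pm\sqrt{\tau}\|\wideparen{v}^\pm-\wideparen{\nu}\|$.
\item Test with $\wideparen{\nu}'=-\tfrac{1}{\sqrt{1+\tau}}\jump{\wideparen{\vw}\cdot\vn}\in\wideparen{\cM}$. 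In the perturbed form the divergence term on $\Gamma$ is absent, so we obtain directly $\tfrac{1}{\sqrt{1+\tau}}\|\jump{\wideparen{\vw}\cdot\vn}\|\le\sqrt{\tau}\sum_\pm\|\wideparen{v}^\pm-\wideparen{\nu}\|+\|f_\mu\|$.
\item Apply the triangle inequality to $\tfrac{\sqrt\tau}{\sqrt{1+\tau}}\|\wideparen{v}^\pm\|$.
\end{enumerate}
Combining these steps with Young's inequality absorbs the full norm from step~2 and gives the bound with a constant depending only on $\kappa$ and on the constant of Lemma~\ref{lem:HDD:Galerkin:deRham}, that is, on the bound of $\wideparen{\Pi}$.

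The main obstacle is step~3 and the corresponding use in the injectivity argument. We must justify that, in the perturbed form, the pairing $\left<\Div\wideparen{\vw},\wideparen{v}\right>$ coincides with the $\cQ_\tau$ pairing, so that the supremum over the discrete space $\wideparen{\cQ}$ realizes the full $\cQ_\tau$ norm. This is exactly where the hypothesis $\Div\wideparen{\cW}\subseteq\wideparen{\cQ}'$ enters, through~\eqref{eq:wideparencQ:dual} and~\eqref{eq:wideparencQ:dual:1}. I would state this identity carefully first as a small lemma before running the five steps.
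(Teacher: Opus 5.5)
Your proposal is correct and follows the paper's proof essentially step for step. For injectivity you test with the triple itself and then with the lift from Lemma~\ref{lem:HDD:Galerkin:deRham}; for the inf-sup bound you use the equivalent a priori estimate from the same five steps, with \eqref{eq:wideparencQ:dual:norm} (a consequence of $\Div\wideparen{\cW}\subseteq\wideparen{\cQ}'$) giving the divergence bound in step~3 exactly as in \eqref{eq:HDD:Galerkin:general:Uniqueness:divq}. Your step~4, which bounds the jump directly by $\|f_\mu\|$ because the perturbed form has no divergence term on $\Gamma$, is the right specialization of the continuous argument; note also that \eqref{eq:wideparencQ:dual:norm} is needed in step~1 to bound $\|\Div\wideparen{\vw}'\|_{\cQ_\tau}$ by $\|\wideparen{v}^\circ\|_{L^2(\OmG)}$, since \eqref{eq:HDD:deRham:wparen:2} only controls the $L^2$ part of the $\cW_\tau$-norm.
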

\begin{proof}
  The proof of~\eqref{eq:HDD:Galerkin:inf-sup:2} is in analogy to the proof of~\eqref{eq:HDD:inf-sup:2},
  where Lemma~\ref{lem:HDD:Galerkin:deRham} is used instead of Lemma~\ref{lem:HDD:deRham}
  and~\eqref{eq:HDD:deRham:wparen:2} instead of~\eqref{eq:HDD:deRham:w:2}.
  Also~\eqref{eq:HDD:Galerkin:inf-sup:1} is shown similarly to~\eqref{eq:HDD:inf-sup:1}  by proving that any $(\wideparen{\vw}, \wideparen{v}, \wideparen{\nu}) \in \wideparen{\cW} \times \wideparen{\cQ} \times \wideparen{\cM}$ solving
  \begin{multline}
    \label{eq:HDD:Galerkin:inf-sup:proof:1}
    \wideparen{\blf{b}} ((\wideparen{\vw}, \wideparen{v}, \wideparen{\nu}), (\wideparen{\vw}', \wideparen{v}', \wideparen{\nu}')) =
     \left< \vf_\vq, \vw\right>_{\cW_\tau} +
     \left<f_u, \wideparen{v}\right>_{\cQ_{\tau/(1+\tau)}} +
     \left<f_\mu, \wideparen{\nu}\right> \quad \\
     \forall (\wideparen{\vw}', \wideparen{v}', \wideparen{\nu}')\in \wideparen{\cW} \times \wideparen{\cQ} \times \wideparen{\cM}
  \end{multline}
  satisfies with a constant $C$ independent of $\tau$ that
  \begin{align}
    \label{eq:HDD:Galerkin:inf-sup:proof:2}
      \|\wideparen{\vq}\|_{\cW_\tau} + \|\wideparen{u}\|_{\cQ_{\tau/(1+\tau)}} + \|\wideparen{\mu}\|_{L^2(\Gamma)}
      \leq C \left( \|\vf_\vq\|_{\cW_\tau'} + \|f_u\|_{\cQ_{\tau/(1+\tau)|}} + \|f_\mu\|_{L^2(\Gamma)}\right).
   \end{align}
   Here, all steps follow similarly to the proof of~\eqref{eq:HDD:inf-sup:proof:2}, and even the bound on $\Div\wideparen{\vw} \in \wideparen{\cQ}'$ is obtained in a similar manner as in the proof for the variational formulation.
   More precisely, inserting~\eqref{eq:HDD:Galerkin:inf-sup:proof:1} for $(\wideparen{\vw}', \wideparen{v}', \wideparen{\nu}') = (\zerobf, \wideparen{v}', 0)$ and using~\eqref{eq:wideparencQ:dual:norm}
   we find
  \begin{align}
    \begin{aligned}
     \|\Div \wideparen{\vw}\|_{\cQ_\tau} &=
     \sup_{\wideparen{v}' \in \wideparen{\cQ} \setminus \{ 0 \}} \frac{ \left<-f_u, \wideparen{v}'\right>_{\cQ_{\tau}} + \sum_\pm \tau\left<(\wideparen{v}^\pm - \wideparen{\nu}), (\wideparen{v}')^\pm\right>}{\sqrt{\|(\wideparen{v}')^\circ\|_{L^2(\OmG)}^2 + \sum_\pm \|\sqrt{\tau}(\wideparen{v}')^\pm\|_{L^2(\Gamma)}^2}}\\
     &\leq \|f_u\|_{\cQ_\tau} + \sum\nolimits_{\pm} \sqrt{\tau}\|\wideparen{v}^\pm - \wideparen{\nu}\|_{L^2(\Gamma)}
     \ .
     \end{aligned}
     \label{eq:HDD:Galerkin:general:Uniqueness:divq}
  \end{align}
   In combination with the other four steps we find~\eqref{eq:HDD:Galerkin:inf-sup:proof:2}.
\end{proof}

\begin{theorem}[Well-posedness] \label{lem:HDD:Galerkin:Wellposedness}
  Let the assumptions of Proposition~\ref{lem:HDD:Galerkin:inf-sup} hold. Then,
  for any $f \in L^2(\OmG)$ there exists a unique solution $(\wideparen{\vq}, \wideparen{u}, \wideparen{\mu}) \in \wideparen{\cW} \times \wideparen{\cQ} \times \wideparen{\cM}$ of~\eqref{dHDD:eq:var} which satisfies with a constant $C$ independent of $\tau$ that
  \begin{subequations}
  	\label{eq:HDD:Galerkin:Wellposedness}
  \begin{align}
     \|\wideparen{\vq}\|_{\cW_\tau} + \|\wideparen{u}\|_{\cQ_{\tau/(1+\tau)}} + \|\wideparen{\mu}\|_{L^2(\Gamma)} &\leq C \|f\|_{L^2(\OmG)},\label{eq:HDD:Galerkin:NormEstqumu}
  \intertext{
  and
  }
   \big\|\jump{\wideparen{\vq}\cdot\bvec{n}}\big\|_{L^2(\Gamma)} &\leq C \sqrt{\tau} \|f\|_{L^2(\Omega)},\label{eq:HDD:Galerkin:NormEstqn}\\
	  \sqrt{\tau}
	  \left(
	  \big\|\jump{\wideparen{u}}\big\|_{L^2(\Gamma)} + \big\| \mean{\wideparen{u}} - \wideparen{\mu}\big\|_{L^2(\Gamma)}
	  \right)
	  &\leq C \|f\|_{L^2(\Omega)}.\label{eq:HDD:Galerkin:NormEstjumpu}
  \end{align}
  \end{subequations}
\end{theorem}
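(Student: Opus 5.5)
Existence, uniqueness and the first bound \eqref{eq:HDD:Galerkin:NormEstqumu} follow from Proposition~\ref{lem:HDD:Galerkin:inf-sup}. The two inf-sup conditions \eqref{eq:HDD:Galerkin:inf-sup} for $\wideparen{\blf{b}}$, together with continuity, give unique solvability of \eqref{HDD:Galerkin:general} by the Babu\v{s}ka--Ne\v{c}as theorem. The right-hand side $\wideparen v\mapsto\langle f,\wideparen v^\circ\rangle$ has dual norm at most $\|f\|_{L^2(\OmG)}$ with respect to $\|\cdot\|_{\cQ_{\tau/(1+\tau)}}$. The inf-sup constant $\gamma$ is independent of $\tau$, so \eqref{eq:HDD:Galerkin:NormEstqumu} holds with $C=1/\gamma$. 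Equivalently, one can apply \eqref{eq:HDD:Galerkin:inf-sup:proof:2} with $\vf_\vq=0$, $f_\mu=0$ and $f_u$ given by $f$.

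For the two $\tau$-explicit estimates I would test \eqref{dHDD:eq:var} with the solution itself, $(\wideparen\vw,\wideparen v,\wideparen\nu)=(\wideparen\vq,\wideparen u,\wideparen\mu)$, exactly as in step~2 of the proof of Proposition~\ref{lem:HDD:inf-sup}. The skew terms $\langle\wideparen u,\Div\wideparen\vq\rangle$ and $\langle\wideparen\mu,\jump{\wideparen\vq\cdot\vn}\rangle$ cancel. This leaves
\begin{align*}
 \big\|\tfrac{1}{\sqrt\kappa}\wideparen\vq\big\|_{L^2(\OmG)}^2+\sum\nolimits_\pm\tau\|\wideparen u^\pm-\wideparen\mu\|_{L^2(\Gamma)}^2=\langle f,\wideparen u^\circ\rangle\le\|f\|_{L^2(\OmG)}\,\|\wideparen u^\circ\|_{L^2(\OmG)}\le C\|f\|_{L^2(\OmG)}^2,
\end{align*}
where the last step uses \eqref{eq:HDD:Galerkin:NormEstqumu}. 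Hence $\sqrt\tau\|\wideparen u^\pm-\wideparen\mu\|_{L^2(\Gamma)}\le C\|f\|$. From the algebraic identities $\jump{\wideparen u}=(\wideparen u^+-\wideparen\mu)-(\wideparen u^--\wideparen\mu)$ and $\mean{\wideparen u}-\wideparen\mu=\tfrac12\sum_\pm(\wideparen u^\pm-\wideparen\mu)$, the triangle inequality gives \eqref{eq:HDD:Galerkin:NormEstjumpu}. Here $\wideparen u^\pm$ are the $\Gamma$-components of $\wideparen u\in\wideparen\cQ$, i.e.\ the projected traces in the intended setting.

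For \eqref{eq:HDD:Galerkin:NormEstqn} I would use \eqref{dHDD:eq:var:3}. This requires the test function $\wideparen\nu=\jump{\wideparen\vq\cdot\vn}$ to be admissible, which holds because $\wideparen\cM=\jump{\wideparen\cW\cdot\vn}$ by the assumptions of Lemma~\ref{lem:HDD:Galerkin:deRham}. The equation then gives $\|\jump{\wideparen\vq\cdot\vn}\|_{L^2(\Gamma)}^2=-\sum_\pm\tau\langle\wideparen u^\pm-\wideparen\mu,\jump{\wideparen\vq\cdot\vn}\rangle$. By Cauchy--Schwarz, $\|\jump{\wideparen\vq\cdot\vn}\|_{L^2(\Gamma)}\le\sqrt\tau\sum_\pm\sqrt\tau\|\wideparen u^\pm-\wideparen\mu\|_{L^2(\Gamma)}\le C\sqrt\tau\|f\|$, which is the claimed bound.

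I expect no step to be a real obstacle, since everything is inherited from Proposition~\ref{lem:HDD:Galerkin:inf-sup}. The only delicate point is the energy identity, specifically that the coupling terms cancel exactly in the perturbed form. The dropped $\sum_\pm\tau(\Div\cdot)^\pm$ terms appear in neither \eqref{dHDD:eq:var:1} nor \eqref{dHDD:eq:var:3}. The term $-\langle\Div\wideparen\vq,\wideparen u\rangle$ in \eqref{dHDD:eq:var:2} is the plain pairing, consistent with $\Div\wideparen\cW\subseteq\wideparen\cQ'$. So the cancellation holds, and I would check it carefully.
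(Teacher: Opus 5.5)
Your proposal is correct and follows the paper's proof essentially step by step. The paper obtains \eqref{eq:HDD:Galerkin:NormEstqumu} from the $\tau$-independent inf-sup constant of Proposition~\ref{lem:HDD:Galerkin:inf-sup}, tests with $(\wideparen{\vq},\wideparen{u},\wideparen{\mu})$ to bound $\sum_\pm\sqrt{\tau}\|\wideparen{u}^\pm-\wideparen{\mu}\|_{L^2(\Gamma)}$, tests \eqref{dHDD:eq:var:3} with $\jump{\wideparen{\vq}\cdot\vn}\in\wideparen{\cM}$ for \eqref{eq:HDD:Galerkin:NormEstqn}, and uses the triangle inequality for \eqref{eq:HDD:Galerkin:NormEstjumpu}. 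Your explicit identities $\jump{\wideparen{u}}=(\wideparen{u}^+-\wideparen{\mu})-(\wideparen{u}^--\wideparen{\mu})$ and $\mean{\wideparen{u}}-\wideparen{\mu}=\tfrac12\sum_\pm(\wideparen{u}^\pm-\wideparen{\mu})$ are a cleaner version of the paper's final display, which has typos ($\wideparen{\nu}$ and $\mu$ written in place of $\wideparen{\mu}$).
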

\begin{proof}
  The well-posedness with the stability estimate~\eqref{eq:HDD:Galerkin:NormEstqumu} follows directly from Proposition~\ref{lem:HDD:Galerkin:inf-sup}, and it remains only to show the improved estimates~\eqref{eq:HDD:Galerkin:NormEstqn} and~\eqref{eq:HDD:Galerkin:NormEstjumpu}.
  Similarly to step 2 in the proof of Proposition~\ref{lem:HDD:inf-sup} testing~\eqref{dHDD:eq:var} with $(\wideparen{\vw}, \wideparen{v}, \wideparen{\nu}) = (\wideparen{\vq}, \wideparen{u}, \wideparen{\mu})$ and using~\eqref{eq:HDD:Galerkin:NormEstqumu}
  we find
  \begin{align}
     \label{eq:HDD:Galerkin:Wellposedness:proof:1}
     \sum\nolimits_\pm \sqrt{\tau}\|\wideparen{u}^\pm - \mu\|_{L^2(\Gamma)} \leq C \|f\|_{L^2(\OmG)}.
  \end{align}
  Now, testing~\eqref{dHDD:eq:var} with $(\zerobf, 0, \jump{\wideparen{\vq}\cdot\vn})$ which is possible due to assumption $\jump{\wideparen{\cW}\cdot\vn} \subseteq \wideparen{\cM}$ and using~\eqref{eq:HDD:Galerkin:Wellposedness:proof:1} implies~\eqref{eq:HDD:Galerkin:NormEstqn}.
  Finally, we have
  \begin{align*}
      \sqrt{\tau}
	  &\left(
	  \big\|\jump{\wideparen{u}}\big\|_{L^2(\Gamma)} + \big\| \mean{\wideparen{u}} - \wideparen{\mu}\big\|_{L^2(\Gamma)}
	  \right)\\
	  &\leq \sqrt{\tau}\left(\big\| (\wideparen{u}^+ - \wideparen{\mu}) - (\wideparen{u}^- - \wideparen{\nu})\big\|_{L^2(\Gamma)}
	  + \tfrac12\big\| (\wideparen{u}^+ - \wideparen{\mu}) + (\wideparen{u}^- - \wideparen{\nu})\big\|_{L^2(\Gamma)}
	  \right)\\
	  &\leq \tfrac32\sum\nolimits_\pm \sqrt{\tau}\|\wideparen{u}^\pm - \mu\|_{L^2(\Gamma)}
  \end{align*}
  and using~\eqref{eq:HDD:Galerkin:Wellposedness:proof:1} we find~\eqref{eq:HDD:Galerkin:NormEstjumpu}.
\end{proof}

\subsection{Error analysis}
With the inf-sup conditions we obtain estimates for the perturbed Galerkin formulation that contains a consistency error contribution.

\begin{proposition}[Estimation by the best approximation error]\label{HDD:prop:Galerkin:ErrEst}
  Let the assumptions of Proposition~\ref{lem:HDD:Galerkin:inf-sup} hold, then the solutions $(\vq, u, \mu) \in \cW_\tau \times \cQ_{\tau/(1+\tau)} \times L^2(\Gamma)$ of~\eqref{HDD:eq:var} and $(\wideparen{\vq}, \wideparen{u}, \wideparen{\mu}) \in \wideparen{\cW} \times \wideparen{\cQ} \times \wideparen{\cM}$ of~\eqref{dHDD:eq:var}
  satisfy with a constant $C$ independent of $\tau$ that
  \begin{multline}
    \label{eq:HDD:Galerkin:Error}
     \|\vq - \wideparen{\vq}\|_{\cW_\tau} + \|u - \wideparen{u}\|_{\cQ_{\tau/(1+\tau)}} + \|\mu - \wideparen{\mu}\|_{L^2(\Gamma)} \\
     \leq
     C
     \inf_{(\wideparen{\vw}, \wideparen{v}, \wideparen{\nu})\in \wideparen{\cW} \times \wideparen{\cQ} \times \wideparen{\cM}}
     \big(\sqrt{1+\tau}
     \pnorm{(\vq - \wideparen{\vw},u - \wideparen{v},\mu - \wideparen{\nu})}
     + \sqrt{\tau} \inf_{\wideparen{v}' \in \wideparen{\cQ}} \sum_\pm \| \wideparen{\nu} - (\wideparen{v}')^\pm\|_{L^2(\Gamma)}
     \big)\ .
  \end{multline}
\end{proposition}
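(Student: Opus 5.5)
This is a Strang-type argument: discrete inf-sup stability plus continuity of $\wideparen{\blf{b}}$, with a consistency term that measures how far the exact solution fails to satisfy the perturbed discrete equations.

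Fix an arbitrary $(\wideparen{\vw},\wideparen{v},\wideparen{\nu})\in\wideparen{\cW}\times\wideparen{\cQ}\times\wideparen{\cM}$ and split the error by the triangle inequality,
\[
(\vq-\wideparen{\vq},u-\wideparen{u},\mu-\wideparen{\mu}) = (\vq-\wideparen{\vw},u-\wideparen{v},\mu-\wideparen{\nu}) + (\wideparen{\vw}-\wideparen{\vq},\wideparen{v}-\wideparen{u},\wideparen{\nu}-\wideparen{\mu}).
\]
The first part appears directly in the bound. For the discrete part $\wideparen{e}$ I would apply the inf-sup condition~\eqref{eq:HDD:Galerkin:inf-sup:1} of Proposition~\ref{lem:HDD:Galerkin:inf-sup}, whose constant $\gamma$ is independent of $\tau$. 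This gives
\[
\gamma\,\pnorm{\wideparen{e}} \le \sup \frac{\wideparen{\blf{b}}(\wideparen{e},\wideparen{z}')}{\pnorm{\wideparen{z}'}} .
\]
By the Galerkin equations~\eqref{HDD:Galerkin:general},
\[
\wideparen{\blf{b}}(\wideparen{e},\wideparen{z}') = \wideparen{\blf{b}}((\wideparen{\vw},\wideparen{v},\wideparen{\nu})-(\vq,u,\mu),\wideparen{z}') + \big[\wideparen{\blf{b}}((\vq,u,\mu),\wideparen{z}') - \left<f,(\wideparen{v}')^\circ\right>\big].
\]
Here $\wideparen{\blf{b}}$ is evaluated at the exact solution, which makes sense because the solution lies in the continuous spaces.

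The first term is controlled by continuity of $\wideparen{\blf{b}}$ on the product space, with constant $C\sqrt{1+\tau}$. The growth in $\tau$ comes from the stabilization terms. For example,
\[
\tau\left<u^\pm-\mu,v^\pm\right> \le \sqrt{1+\tau}\;\sqrt{\tau}\|u^\pm-\mu\|\;\tfrac{\sqrt\tau}{\sqrt{1+\tau}}\|v^\pm\|.
\]
Similarly, $\left<\jump{\vw\cdot\vn},\nu\right>$ costs $\sqrt{1+\tau}$ against the weight $\tfrac{1}{\sqrt{1+\tau}}$ in $\|\cdot\|_{\cW_\tau}$. Term-by-term Cauchy–Schwarz bounds each contribution by $C\sqrt{1+\tau}$ times the product of the two $\pnorm{\cdot}$ norms. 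This yields the factor $\sqrt{1+\tau}\,\pnorm{(\vq-\wideparen{\vw},u-\wideparen{v},\mu-\wideparen{\nu})}$.

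The consistency term is the main step. I would use Theorem~\ref{lem:HDD:Wellposedness}: the exact solution has $(\Div\vq)^\pm=0$, $\jump{\vq\cdot\vn}=0$, $u^\pm=\mu$ and $\Div\vq=-f$ in $L^2(\OmG)$. With these, equations~\eqref{dHDD:eq:var:2} and~\eqref{dHDD:eq:var:3} are satisfied exactly by $(\vq,u,\mu)$ for discrete test functions. The defect therefore sits only in~\eqref{dHDD:eq:var:1}. There, comparing with~\eqref{HDD:eq:var:1} and using $\left<u,\Div\wideparen{\vw}'\right>_{\cQ_\tau}=\left<u,\Div\wideparen{\vw}'\right>$ by the definition of $\wideparen{\cQ}'$ (since $\Div\wideparen{\cW}\subseteq\wideparen{\cQ}'$), the defect reduces to
\[
\Big<\mu,\sum_\pm\tau(\Div\wideparen{\vw}')^\pm\Big>,
\]
possibly together with a term $\sum_\pm\tau\left<u^\pm,(\Div\wideparen{\vw}')^\pm\right>$ of the same type. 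By~\eqref{eq:wideparencQ:dual:1} we may subtract $\sum_\pm\tau\left<(\wideparen{v}')^\pm,(\Div\wideparen{\vw}')^\pm\right>=0$ for any $\wideparen{v}'\in\wideparen{\cQ}$. Then we insert $\wideparen{\nu}$ and bound the result by
\[
\sqrt\tau\Big(\sum_\pm\|\mu-\wideparen{\nu}\|+\|\wideparen{\nu}-(\wideparen{v}')^\pm\|\Big)\,\|\sqrt\tau(\Div\wideparen{\vw}')^\pm\|,
\]
using $\|\sqrt\tau(\Div\wideparen{\vw}')^\pm\|\le\|\wideparen{\vw}'\|_{\cW_\tau}$. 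The $\mu-\wideparen{\nu}$ part is absorbed into the $\sqrt{1+\tau}\,\pnorm{\cdot}$ term. Taking the infimum over $\wideparen{v}'$ produces the last summand of~\eqref{eq:HDD:Galerkin:Error}.

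Finally, combine the two estimates, divide by $\gamma$, add the first part of the split, and take the infimum over $(\wideparen{\vw},\wideparen{v},\wideparen{\nu})$. The obstacle I expect is bookkeeping the $\cQ_\tau$-duality in the defect. In particular, one must check that the $\Gamma^\pm$ components of $\Div\wideparen{\vw}'$ pair correctly with $u$ and $\mu$. One must also confirm that no $\tau$-power worse than $\sqrt{1+\tau}$ or $\sqrt\tau$ appears.
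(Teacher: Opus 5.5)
Your Strang-type skeleton and your treatment of the consistency term coincide with the paper's. In both, one subtracts $\sum_\pm\tau\langle(\wideparen{v}')^\pm,(\Div\wideparen{\vw}')^\pm\rangle=0$ via \eqref{eq:wideparencQ:dual:1}, inserts $\wideparen{\nu}$, and uses $\|\sqrt{\tau}(\Div\wideparen{\vw}')^\pm\|_{L^2(\Gamma)}\le\|\wideparen{\vw}'\|_{\cW_\tau}$.

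The one structural difference is the form of Strang's lemma. You take the defect of the exact solution in $\wideparen{\blf{b}}$ (second Strang lemma). The paper bounds $\blf{b}-\wideparen{\blf{b}}$ at the discrete comparison triple (first Strang lemma), and must then absorb an extra term $\sum_\pm\tau\langle(\Div\wideparen{\vw})^\pm-(\Div\vq)^\pm,\wideparen{\nu}''\rangle$ into the best-approximation error. In your version that term never appears, because \eqref{dHDD:eq:var:2}--\eqref{dHDD:eq:var:3} hold exactly for $(\vq,u,\mu)$.

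A small correction: $\langle u,\Div\wideparen{\vw}'\rangle_{\cQ_\tau}=\langle u,\Div\wideparen{\vw}'\rangle$ does not follow from $\Div\wideparen{\cW}\subseteq\wideparen{\cQ}'$, since $u\notin\wideparen{\cQ}$. It is harmless if you read it as the definition of how $\wideparen{\blf{b}}$ is extended to the exact solution. With the more natural extension $\langle u^\circ,(\Div\wideparen{\vw}')^\circ\rangle$, the defect equals $-\sum_\pm\tau\langle u^\pm-\mu,(\Div\wideparen{\vw}')^\pm\rangle$. This vanishes because $\sqrt{\tau}u^\pm=\sqrt{\tau}\mu$.

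The step that fails as written is continuity of $\wideparen{\blf{b}}$ with constant $C\sqrt{1+\tau}$. In your sample estimate, the factor $\sqrt{\tau}\|u^\pm-\mu\|_{L^2(\Gamma)}$ is not bounded by $C\pnorm{(\vw,u,\mu)}$. That norm only contains $\tfrac{\sqrt{\tau}}{\sqrt{1+\tau}}\|u^\pm\|$ and $\|\mu\|$, so the factor costs a second $\sqrt{1+\tau}$. Concretely, $\wideparen{\blf{b}}((\zerobf,0,\nu),(\zerobf,0,\nu))=2\tau\|\nu\|_{L^2(\Gamma)}^2$, so the continuity constant is of order $1+\tau$. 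Inf-sup plus continuity therefore only yields \eqref{eq:HDD:Galerkin:Error} with $1+\tau$ in place of $\sqrt{1+\tau}$. The paper's proof asserts the same constant (``dominated by the term $\tau\langle\wideparen{\nu},\wideparen{\nu}'\rangle$''), so you have reproduced its argument including this flaw.

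To obtain $\sqrt{1+\tau}$, treat the stabilization part of the residual separately.
\begin{itemize}
\item Split $\wideparen{\blf{b}}((\vq-\wideparen{\vw},u-\wideparen{v},\mu-\wideparen{\nu}),\cdot)=F_1+F_2$, where $F_2(\wideparen{\vw}',\wideparen{v}',\wideparen{\nu}')=\sum_\pm\tau\langle g^\pm,(\wideparen{v}')^\pm-\wideparen{\nu}'\rangle$ and $g^\pm=(u-\wideparen{v})^\pm-(\mu-\wideparen{\nu})$.
\item The dual norm of $F_1$ really is at most $C\sqrt{1+\tau}\,\pnorm{(\vq-\wideparen{\vw},u-\wideparen{v},\mu-\wideparen{\nu})}$. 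Your defect is handled as you did it.
\item For the part $(\vw,v,\nu)$ of the discrete error driven by $F_2$, do not pass through the dual norm. Instead rerun the five steps of the proof of Proposition~\ref{lem:HDD:inf-sup}.
\item Testing with the error itself gives $\tfrac{1}{\kappa_{\mathrm{sup}}}\|\vw\|^2+\sum_\pm\tau\|v^\pm-\nu\|^2\le\sum_\pm\sqrt{\tau}\|g^\pm\|\,\sqrt{\tau}\|v^\pm-\nu\|$.
\item Step 1 gives $\|v^\circ\|+\|\nu\|\le C\|\vw\|$, because $F_2$ vanishes on $(\wideparen{\vw}',0,0)$.
\item Steps 3--5 bound $\|\Div\vw\|_{\cQ_\tau}$, $\tfrac{1}{\sqrt{1+\tau}}\|\jump{\vw\cdot\vn}\|$ and $\tfrac{\sqrt{\tau}}{\sqrt{1+\tau}}\|v^\pm\|$ by $C\bigl(\|\nu\|+\sum_\pm\sqrt{\tau}(\|v^\pm-\nu\|+\|g^\pm\|)\bigr)$.
\end{itemize}
Hence this error part is at most $C\sum_\pm\sqrt{\tau}\|g^\pm\|\le C\sqrt{1+\tau}\,\pnorm{(\vq-\wideparen{\vw},u-\wideparen{v},\mu-\wideparen{\nu})}$. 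Combined with the $F_1$ and defect bounds, this is exactly what the proposition requires.
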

\begin{proof}
   The bilinear form $\wideparen{b}$ is continuous with a constant $C\sqrt{1+\tau}$ where the $\tau$ dependency is dominated by the term $\tau\left<\wideparen{\nu}, \wideparen{\nu}'\right>$. As the inf-sup constant of $\wideparen{b}$ is independent of $\tau$ (see Proposition~\ref{lem:HDD:Galerkin:inf-sup}) we find by Strang's lemma that
  \begin{multline}
     \label{eq:HDD:Galerkin:Error:1}
     \pnorm{(\vq - \wideparen{\vq},u - \wideparen{u},\mu - \wideparen{\mu})}
     \leq
     C
     \bigg(\sqrt{1+\tau}
     \inf_{(\wideparen{\vw}, \wideparen{v}, \wideparen{\nu})\in \wideparen{\cW} \times \wideparen{\cQ} \times \wideparen{\cM}}
     \pnorm{(\vq - \wideparen{\vw},u - \wideparen{v},\mu - \wideparen{\nu})} \\
     + \hspace{-2em}
     \sup_{(\wideparen{\vw}'', \wideparen{v}'', \wideparen{\nu}'') \in \wideparen{\cW} \times \wideparen{\cQ} \times \wideparen{\cM}}
     \hspace{-2em}
     \frac{|\blf{b}((\wideparen{\vw}, \wideparen{v}, \wideparen{\nu}), (\wideparen{\vw}'', \wideparen{v}'', \wideparen{\nu}''))
     - \wideparen{\blf{b}}((\wideparen{\vw}, \wideparen{v}, \wideparen{\nu}), (\wideparen{\vw}'', \wideparen{v}'', \wideparen{\nu}''))|}{\pnorm{(\wideparen{\vw}'', \wideparen{v}'', \wideparen{\nu}'')}}
     \bigg)\ .
  \end{multline}
  The nominator of the second term is
  \begin{align*}
     |\blf{b}((\wideparen{\vw}, &\wideparen{v}, \wideparen{\nu}), (\wideparen{\vw}'', \wideparen{v}'', \wideparen{\nu}''))
     - \wideparen{\blf{b}}((\wideparen{\vw}, \wideparen{v}, \wideparen{\nu}), (\wideparen{\vw}'', \wideparen{v}'', \wideparen{\nu}''))|\\
     &\leq \sum\nolimits_\pm \tau \left| \big<\wideparen{\nu}, (\Div \wideparen{\vw}'')^\pm \big>
           - \big< (\Div \wideparen{\vw})^\pm, \wideparen{\nu}''\big>\right| \\
     &\leq \sum\nolimits_\pm \tau \left( \left| \big<\wideparen{\nu} - (\wideparen{v}')^\pm, (\Div \wideparen{\vw}'')^\pm \big>\right|
          + \left|\big< (\Div \wideparen{\vw})^\pm - (\Div \vq)^\pm, \wideparen{\nu}''\big>\right|\right)
  \end{align*}
  for any $\wideparen{v}' \in \wideparen{\cQ}$, where we used~\eqref{eq:wideparencQ:dual:1} and $(\Div \vq)^\pm = 0$ by Theorem~\ref{lem:HDD:Wellposedness}. %
  The second term can be incorporated in the first term of~\eqref{eq:HDD:Galerkin:Error:1}
  and using again~\eqref{eq:wideparencQ:dual:1}  it can be seen even as an error contribution of higher order. Using the Cauchy-Schwarz inequality and the definition of the $\pnorm{\cdot}$ norm we obtain~\eqref{eq:HDD:Galerkin:Error}, which completes the proof.
\end{proof}
In Proposition~\ref{HDD:prop:Galerkin:ErrEst} the discretization error is bounded by the best approximation error which we would like to bound by the interpolation error for the considered finite element methods in Sec.~\ref{sec:FEM}.
However, the error of solution component $(\Div \wideparen{\vw})^\pm$ on~$\Gamma$ in the $\pnorm{\cdot}$-norm can not directly
be estimated as it is implicitly determined with the conditions~\eqref{eq:wideparencQ:dual:divq_paren}. In the following lemma we give an estimate without divergence terms on $\Gamma$ that can be used with interpolation operators. %
For this we consider more generally the affine space $\wideparen{\cW}_f$ where in difference to $\wideparen{\cW}$ for each $\wideparen{\vw} \in \wideparen{\cW}_f$ the components $(\Div \wideparen{\vw})^\pm$ fulfill
\begin{align}
  \sum_\pm \tau \big<(\Div \wideparen{\vw})^\pm, v^\pm\big> &= \big< f - (\Div \wideparen{\vw})^\circ, v^\circ\big>
  \qquad \forall v \in \wideparen{Q}^\perp\
  \label{eq:wideparencQ:dual:2:f}
\end{align}
instead of~\eqref{eq:wideparencQ:dual:2}. %
For a solution $(\wideparen{\vq}, \wideparen{u}, \wideparen{\mu}) \in \wideparen{\cW} \times \wideparen{\cQ} \times \wideparen{\cM}$ of~\eqref{dHDD:eq:var} we may consider $(\wideparen{\vq}_f, \wideparen{u}, \wideparen{\mu}) \in \wideparen{\cW} \times \wideparen{\cQ} \times \wideparen{\cM}$ where $\wideparen{\vq}_f$ differs from $\wideparen{\vq}$ only in the divergence components on $\Gamma$ that fulfill~\eqref{eq:wideparencQ:dual:2:f} instead of~\eqref{eq:wideparencQ:dual:2}.

\begin{lemma}[Estimation of the best approximation error]\label{HDD:them:Galerkin:ErrEst}
  Let the assumptions of Proposition~\ref{lem:HDD:Galerkin:inf-sup} hold and $f \in Q_\tau$, then for the solutions $(\vq, u, \mu) \in \cW_\tau \times \cQ_{\tau/(1+\tau)} \times L^2(\Gamma)$ of~\eqref{HDD:eq:var} and any $(\wideparen{\vw}_f, \wideparen{v}, \wideparen{\nu}) \in \wideparen{\cW}_f \times \wideparen{\cQ} \times \wideparen{\cM}$ it holds with a constant $C$ independent of $\tau$ that
  \begin{multline}
    \label{eq:HDD:Galerkin:Error:Improve}
     \pnorm{(\vq - \wideparen{\vw}_f,u - \wideparen{v},\mu - \wideparen{\nu})}^2
     =
     \big\|\vq - \wideparen{\vw}_f \big\|_{H(\Div^\circ,\OmG)}^2 + \tfrac{1}{\sqrt{1+\tau}} \big\| \jump{\wideparen{\vw}_f\cdot\vn} \big\|_{L^2(\Gamma)}^2 \\
     + \big\|u - \wideparen{v}\big\|_{\cQ_{\tau/(1+\tau)}}^2
     \hspace{-0.5em}
     + \big\| \mu - \wideparen{\nu} \big\|_{L^2(\Gamma)}^2.
  \end{multline}
\end{lemma}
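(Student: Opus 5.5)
The plan is to expand the left-hand side using the definitions of $\pnorm{\cdot}$, $\|\cdot\|_{\cW_\tau}$ and $\|\cdot\|_{\cQ_\tau}$, and then use the properties of the exact solution and of $\wideparen{\cW}_f$ to show that the divergence contributions on $\Gamma$ drop out. By definition,
\begin{align*}
\pnorm{(\vq - \wideparen{\vw}_f,u - \wideparen{v},\mu - \wideparen{\nu})}^2
= \|\vq-\wideparen{\vw}_f\|_{L^2(\OmG)}^2 + \|\Div(\vq-\wideparen{\vw}_f)\|_{\cQ_\tau}^2
+ \tfrac{1}{1+\tau}\|\jump{(\vq-\wideparen{\vw}_f)\cdot\vn}\|_{L^2(\Gamma)}^2 + \ldots,
\end{align*}
where the remaining terms, for $u - \wideparen{v}$ and $\mu - \wideparen{\nu}$, already appear unchanged on the right-hand side of~\eqref{eq:HDD:Galerkin:Error:Improve}. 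By Theorem~\ref{lem:HDD:Wellposedness} we have $\jump{\vq\cdot\vn} = 0$, so the jump term reduces to the one of $\wideparen{\vw}_f$ alone. We also have $(\Div\vq)^\pm = 0$ and $(\Div \vq)^\circ = -f$.

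It therefore remains to identify $\|\Div(\vq-\wideparen{\vw}_f)\|_{\cQ_\tau}^2 = \|(\Div(\vq - \wideparen{\vw}_f))^\circ\|^2_{L^2(\OmG)} + \sum_\pm \tau\|(\Div\wideparen{\vw}_f)^\pm\|^2_{L^2(\Gamma)}$ with the $\Div^\circ$ part of $\|\vq - \wideparen{\vw}_f\|_{H(\Div^\circ,\OmG)}^2$. In other words, we need to show that the $\Gamma$-components $\sqrt{\tau}(\Div\wideparen{\vw}_f)^\pm$ vanish, or are absorbed into the $H(\Div^\circ)$ norm.

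The key step is to exploit the defining relations of $\wideparen{\cW}_f$. On $\wideparen{\cQ}$, relation~\eqref{eq:wideparencQ:dual:1} gives $\sum_\pm\tau\langle(\Div\wideparen{\vw}_f)^\pm,\wideparen{v}^\pm\rangle = 0$. On $\wideparen{\cQ}^\perp$, relation~\eqref{eq:wideparencQ:dual:2:f} together with $f = -(\Div\vq)^\circ$ gives
\begin{align*}
\sum_\pm\tau\langle(\Div\wideparen{\vw}_f)^\pm, v^\pm\rangle = -\langle (\Div(\vq-\wideparen{\vw}_f))^\circ, v^\circ\rangle .
\end{align*}
Hence the element $\varphi := \Div(\vq - \wideparen{\vw}_f) \in \cQ_\tau$ satisfies $\langle\varphi, v\rangle_{\cQ_\tau} = 0$ for all $v \in \wideparen{\cQ}^\perp$. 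This should let us express $\|\varphi\|_{\cQ_\tau}$ through a supremum over $\wideparen{\cQ}$ only, as in~\eqref{eq:wideparencQ:dual:norm}. With the decomposition $\cQ_\tau = \wideparen{\cQ}\oplus\wideparen{\cQ}^\perp$, I would then test with $v = \varphi$ and split it into its $\wideparen{\cQ}$ and $\wideparen{\cQ}^\perp$ parts, so that the $\Gamma$ part is controlled by the interior part. This is where the hypothesis $f\in\cQ_\tau$ enters: it makes $f$, viewed with vanishing $\Gamma$-components, an admissible test object.

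The main obstacle is turning this orthogonality into the claimed equality (rather than merely an upper bound). I would first try to show that the $\Gamma$-components of $\varphi$ are determined entirely by its interior part through~\eqref{eq:wideparencQ:dual:2:f}. Then $\|\varphi\|_{\cQ_\tau}$ should coincide with the interior $L^2$ norm of $\varphi^\circ$ measured in the dual sense. If exact equality fails, I would settle for a two-sided equivalence with constants independent of $\tau$, which is all that is needed in Proposition~\ref{HDD:prop:Galerkin:ErrEst}. Finally, collecting all the terms yields~\eqref{eq:HDD:Galerkin:Error:Improve}.
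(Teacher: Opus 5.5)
\textbf{Gap: the decisive step is not carried out.} Your overall route is the paper's. You represent $\|\Div(\vq-\wideparen{\vw}_f)\|_{\cQ_\tau}$ by duality over $\cQ_\tau=\wideparen{\cQ}\oplus\wideparen{\cQ}^\perp$, use \eqref{eq:wideparencQ:dual:2:f} to remove the $\wideparen{\cQ}^\perp$-part, and use \eqref{eq:wideparencQ:dual:1} to remove the $\Gamma$-terms on $\wideparen{\cQ}$. But after obtaining the orthogonality you only say the norm ``should'' reduce to its interior part, and you offer an unproved two-sided equivalence as a fallback.

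You also have the difficulty backwards. The bound $\|\varphi\|_{\cQ_\tau}\ge\|\varphi^\circ\|_{L^2(\OmG)}$ is immediate from the definition of the $\cQ_\tau$-norm. What must be shown is the upper bound, i.e.\ control of $\sqrt{\tau}(\Div\wideparen{\vw}_f)^\pm$, and your fallback would need exactly that direction too. It takes two lines. Let $P$ be the $\cQ_\tau$-orthogonal projection onto $\wideparen{\cQ}$. Use $\varphi\perp\wideparen{\cQ}^\perp$, $\sqrt{\tau}\varphi^\pm=-\sqrt{\tau}(\Div\wideparen{\vw}_f)^\pm$, and \eqref{eq:wideparencQ:dual:1} applied to $P\varphi\in\wideparen{\cQ}$:
\[
\|\varphi\|_{\cQ_\tau}^2=\langle\varphi,P\varphi\rangle_{\cQ_\tau}
=\langle\varphi^\circ,(P\varphi)^\circ\rangle-\sum\nolimits_\pm\tau\langle(\Div\wideparen{\vw}_f)^\pm,(P\varphi)^\pm\rangle
=\langle\varphi^\circ,(P\varphi)^\circ\rangle
\le\|\varphi^\circ\|_{L^2(\OmG)}\|\varphi\|_{\cQ_\tau}.
\]
Together with the trivial reverse inequality this gives $\|\varphi\|_{\cQ_\tau}=\|\varphi^\circ\|_{L^2(\OmG)}$. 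Equivalently, $\varphi=P\varphi$ is itself admissible in \eqref{eq:wideparencQ:dual:1}, which gives $\sum_\pm\tau\|(\Div\wideparen{\vw}_f)^\pm\|_{L^2(\Gamma)}^2=0$. So the $\Gamma$-components vanish outright; they are not ``absorbed''. The same observation justifies the final equality in the paper's chain, where the remaining supremum over $\wideparen{\cQ}$ is identified with $\|\varphi^\circ\|_{L^2(\OmG)}$.

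\textbf{Sign error in the orthogonality step.} Your orthogonality claim does not follow from the identity you display. You have $\langle\varphi,v\rangle_{\cQ_\tau}=\langle\varphi^\circ,v^\circ\rangle-\sum_\pm\tau\langle(\Div\wideparen{\vw}_f)^\pm,v^\pm\rangle$, so your identity yields $2\langle\varphi^\circ,v^\circ\rangle$, not $0$. Moreover, \eqref{eq:wideparencQ:dual:2:f} as printed (with $+f$) gives $-\langle(\Div(\vq+\wideparen{\vw}_f))^\circ,v^\circ\rangle$, not your right-hand side.

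What is needed is $\sum_\pm\tau\langle(\Div\wideparen{\vw}_f)^\pm,v^\pm\rangle=+\langle\varphi^\circ,v^\circ\rangle$ for $v\in\wideparen{\cQ}^\perp$. This is \eqref{eq:wideparencQ:dual:2:f} with $-f$ in place of $f$, i.e.\ $\langle\Div\wideparen{\vw}_f,v\rangle_{\cQ_\tau}=\langle\Div\vq,v\rangle_{\cQ_\tau}$ on $\wideparen{\cQ}^\perp$. The cancellation in the paper's own chain also requires this sign, so fix the convention explicitly.

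Similarly, the jump weight coming from \eqref{HDD:eq:WtauNorm} is $\tfrac{1}{1+\tau}$. The printed $\tfrac{1}{\sqrt{1+\tau}}$ must be read as a typo for the stated equality to hold; you corrected this silently.
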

\begin{proof}
   First we find for the $\cQ_\tau$-norm using $(\Div \vq)^\circ = -f$ and $\sqrt{\tau}(\Div \vq)^\pm = 0$ that
   \begin{align*}
      \| \Div(\vq &- \wideparen{\vw}_f) \|_{\cQ_\tau}
      = \sup_{v \in \cQ_\tau \setminus \{ 0 \} } \frac{ \left<-f - (\Div \wideparen{\vw}_f)^\circ), v^\circ\right> - \sum_\pm \tau \left<(\Div \wideparen{\vw}_f)^\pm, v^\pm \right>}{\|v\|_{\cQ_\tau}} \\
      &= \sup_{(\wideparen{v},\wideparen{v}^\bot) \in \wideparen{\cQ} \times \wideparen{\cQ}^\bot \setminus \{ (0,0) \}}
      \frac{ \left<-f - (\Div \wideparen{\vw}_f)^\circ, \wideparen{v}^\circ\right> - \sum_\pm \tau \left<(\Div \wideparen{\vw}_f)^\pm, \wideparen{v}^\pm \right>
      - \left< f, (\wideparen{v}^\bot)^\circ\right>
      - \left< \Div \wideparen{\vw}_f, \wideparen{v}^\bot\right>_{\cQ_\tau}}{\|\wideparen{v} + \wideparen{v}^\bot\|_{\cQ_\tau}} \\
      &= \sup_{(\wideparen{v},\wideparen{v}^\bot) \in \wideparen{\cQ} \times \wideparen{\cQ}^\bot \setminus \{ (0,0) \} }
      \frac{ \left<-f - (\Div \wideparen{\vw}_f)^\circ, \wideparen{v}^\circ\right> }{\|\wideparen{v} + \wideparen{v}^\bot\|_{\cQ_\tau}} \\
      &= \| -f - \Div \wideparen{\vw}_f \|_{L^2(\OmG)} = \| \Div (\vq - \wideparen{\vw}_f) \|_{L^2(\OmG)},
   \end{align*}
   where we used that $\sum_\pm \tau \left<(\Div \wideparen{\vw}_f)^\pm, \wideparen{v}^\pm \right> = 0$ by~\eqref{eq:wideparencQ:dual:1} and
   $\left<\Div \wideparen{\vw}_f, \wideparen{v}^\bot\right>_{\cQ_\tau} = \left< f, (\wideparen{v}^\bot)^\circ\right>$ by \eqref{eq:wideparencQ:dual:2:f}. %
   We find~\eqref{eq:HDD:Galerkin:Error:Improve} by adding the other terms in the $\pnorm{\cdot}$-norm.
\end{proof}

\section{Finite element discretization}
\label{sec:FEM}
\subsection{Introduction of the formulation with traces and projected traces}
For any $v$ in the Sobolev space $\cQ_\tau$ in the variational formulation~\eqref{HDD:eq:var} the components $\sqrt{\tau} v^\pm \in L^2(\Gamma)$ are independent of $v^\circ \in L^2(\OmG)$, where for a more regular solution $u \in H^{\nicefrac32}(\OmG)$ the components $\sqrt{\tau} u^\pm$ are the weighted traces $\sqrt{\tau} \tr^\pm u$.
When constructing a finite element method, we choose the discrete space $M^q(\mathcal{F}_{\Gamma,h})$ for the hybrid variable $\mu_h$ on $\Gamma$ as well as $Q^q(\mathcal{T}_h)$ and $W^q(\mathcal{T}_h)$ for the solution components $u_h$ and $\vq_h$ in $\OmG$, respectively. Note that the latter two fulfill a discrete de~Rham sequence. Then, in difference to the variational formulation we may fix for any $v_h \in Q^q(\mathcal{T}_h)$ -- in the context of finite element methods we prefer to omit the superscript in $v_h^\circ$ -- the associated components $\sqrt{\tau}(v_h)^\pm$ on $\Gamma$. %
In this paper, we choose $\sqrt{\tau} \pitr^\pm v_h$ with projected one-sided traces $\pitr^\pm$.
In contrast, following the Galerkin formulations from Sec.~\ref{sec:Galerkin}, the contributions $(\Div \vw_h)^\pm$ on $\Gamma$ for any $\vw_h \in W^q(\mathcal{T}_h)$ are determined by~\eqref{eq:wideparencQ:dual:divq_paren}. %

\subsubsection{Conforming finite element formulation with projected traces}
We have chosen piecewise polynomials $M^q(\mathcal{F}_{\Gamma,h})$ but the traces $\tr^\pm v_h$ on $\Gamma$ of functions $v_h \in Q^q(\mathcal{T}_h)$ are not necessarily polynomials on the facets $F\in\mathcal{F}_h$. %
More precisely, on a cell $K \in \mathcal{T}_h$ we have
\begin{align}
  v_h|_K(\bvec{x}) = |\det \nabla \Phi_K (\widehat{\bvec{x}})|^{-1} \widehat{v}_h(\widehat{\bvec{x}}) \text{ for } \bvec{x} = \Phi_K (\widehat{\bvec{x}})
  \label{FEM:eq:vh_transformation}
\end{align}
and some $\widehat{v}_h \in \mathcal{P}^q$ such that
\begin{align*}
   \int_K v_h \text{d}\bvec{x}
   = \int_{\widehat{K}} \widehat{v}_h \,\text{d}\widehat{\bvec{x}}.
\end{align*}
For each face $F \in \mathcal{F}_{\Gamma,h}$ we have two neighboring cells $K_F^\pm \in \mathcal{T}_h$ and the trace of $v_h|_{K_F^\pm}$ to $F$ is given by
\begin{align*}
   \tr^\pm v_h(\bvec{x}) = |\det \nabla \Phi_{K_F^\pm} (\Phi_{K_F^\pm}^{-1}\Phi_F \widehat{\bvec{x}})|^{-1} \tr^\pm\widehat{v}_h (\widehat{x}_1,0) \text{ for } \bvec{x} = \Phi_F \widehat{x}_1 \in F
\end{align*}
with the mapping $\Phi_F: [0,1] \to F$, where we assume without loss of generality that $\Phi_F \widehat{x}_1 = \Phi_{K^\pm_F}(\widehat{x}_1, 0)$, i.e., it is the lower edge in the reference quad $[0,1]^2$.
With this we can define the projected traces
\begin{align}
    (\pitr^\pm v_h\big|_F\circ \Phi_F, \widehat{\nu}_h)_{L^2(0,1)} = (\tr^\pm v_h\big|_F\circ \Phi_F, \widehat{\nu}_h)_{L^2(0,1)}
    \quad \forall \widehat{\nu}_h \in \mathcal{P}^q.´
   \label{FEM:eq:Pi_h}
\end{align}
If we then expand $\pitr^\pm_h v|_F \circ \Phi_F$ in Legendre polynomials then the left-hand side becomes a diagonal matrix that is identical for all $F \in \mathcal{F}_{\Gamma,h}$, since $M^q(\mathcal{F}_{\Gamma,h})$ restricted to a facet $F$ and evaluated on the reference interval coincides with $\mathcal{P}^q$.
\begin{lemma}[Projection error]
If the mesh is quasi-uniform and shape-regular then there holds for all $v_h \in Q^q(\mathcal{T}_h)$ with a constant $C > 0$
\begin{align}
  \sum\nolimits_\pm \sqrt{\tau}\|\pitr^\pm v_h - \tr^\pm v_h\|_{L^2(\Gamma)}
  \leq C \sqrt{\tau} h^{q+\nicefrac12} \sum\nolimits_\pm \big(\sum_{F \in \mathcal{F}_{\Gamma,h}}| \tr^\pm v_h|_{H^{q+1}(F)}^2\big)^{\nicefrac12}.
\end{align}
\label{lem:FEM:projection_error}
\end{lemma}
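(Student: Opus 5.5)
The estimate is local: both sides are sums over facets $F\in\mathcal{F}_{\Gamma,h}$, so I will prove the bound facet by facet and then sum. The factor $\sqrt{\tau}$ appears on both sides, so it suffices to show
\begin{align*}
  \|\pitr^\pm v_h - \tr^\pm v_h\|_{L^2(F)} \leq C h^{q+\nicefrac12} |\tr^\pm v_h|_{H^{q+1}(F)}
\end{align*}
for each facet $F$ and each side $\pm$. Summing the squares over $F$, taking square roots, and using $\|\cdot\|_{L^2(\Gamma)}^2=\sum_F\|\cdot\|_{L^2(F)}^2$ then gives the claim.

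\textbf{Step 1: reduce to a best approximation.} By~\eqref{FEM:eq:Pi_h}, on each facet the projected trace $\pitr^\pm v_h|_F\circ\Phi_F$ is the $L^2(0,1)$-orthogonal projection of $\tr^\pm v_h|_F\circ\Phi_F$ onto $\mathcal{P}^q$. Strictly, the space $M^q$ carries the weight $|\Phi_F'|$. On a shape-regular, quasi-uniform curved mesh, however, $|\Phi_F'|$ is bounded above and below by multiples of $h$, with derivatives controlled. Hence this projection is $L^2(F)$-stable and quasi-optimal, with constants independent of $F$ and $h$. I therefore bound
\begin{align*}
  \|\pitr^\pm v_h - \tr^\pm v_h\|_{L^2(F)} \leq C\inf_{\widehat p\in\mathcal{P}^q}\|\tr^\pm v_h\circ\Phi_F-\widehat p\|_{L^2(0,1)}\,\|\Phi_F'\|_{L^\infty}^{\nicefrac12}.
\end{align*}
The quasi-optimality constant comes from $\|\mathrm{Id}-\Pi\|\le 1+\|\Pi\|$.

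\textbf{Step 2: Bramble--Hilbert on the reference interval.} On $(0,1)$ I apply the Bramble--Hilbert lemma:
\begin{align*}
  \inf_{\widehat p\in\mathcal{P}^q}\|\widehat g-\widehat p\|_{L^2(0,1)}\le C|\widehat g|_{H^{q+1}(0,1)},
  \qquad \widehat g=\tr^\pm v_h\circ\Phi_F .
\end{align*}
This is where the non-polynomial nature of the trace enters. Because of the factor $|\det\nabla\Phi_K|^{-1}$ in~\eqref{FEM:eq:vh_transformation}, $\widehat g$ is not in $\mathcal{P}^q$, so its seminorm $|\widehat g|_{H^{q+1}(0,1)}$ is generally nonzero.

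\textbf{Step 3: scale back to $F$.} The chain rule gives $|\widehat g|_{H^{q+1}(0,1)}\le C\sum_{k\le q+1} h^{k-\nicefrac12}|\tr^\pm v_h|_{H^k(F)}$. Combined with the factor $h^{\nicefrac12}$ from the Jacobian in Step 1, the leading term yields $h^{q+1}|\tr^\pm v_h|_{H^{q+1}(F)}$, which is better than the claimed $h^{q+\nicefrac12}$. The lower-order terms come from the curvature of $\Phi_F$: higher derivatives of $\Phi_F$ are $O(h)$ at best, i.e.\ of the same order as $\Phi_F'$, so the chain rule produces lower-order seminorms.

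\textbf{Main obstacle.} The difficulty is the treatment of these lower-order chain-rule terms. I would try to handle them by applying Bramble--Hilbert directly in the physical arclength parametrization of $F$, rather than through the possibly non-affine map $\Phi_F$. In that parametrization the $L^2$-projection onto the mapped polynomials differs from the arclength-polynomial projection only by a smooth, $h$-uniformly controlled weight. A Bramble--Hilbert estimate with respect to arclength then gives $Ch^{q+1}|\tr^\pm v_h|_{H^{q+1}(F)}$ directly. Since $h^{q+1}\le h^{q+\nicefrac12}$ for $h\le1$, the stated bound follows, up to a possibly $h$-independent adjustment of $C$. If this weight argument fails, I would fall back on the half-power loss $h^{q+\nicefrac12}$ permitted in the statement. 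In that case I would absorb the lower-order terms, using the fact that $\tr^\pm v_h$ is a polynomial divided by a smooth Jacobian, so that its lower derivatives are controlled by a scaled inverse-type inequality.
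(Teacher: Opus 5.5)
Your route (pull back to $(0,1)$, use $|\Phi_F'|\lesssim h$, apply Bramble--Hilbert on the reference interval, scale back) is the paper's route. Steps~1--2 are sound if $\pitr^\pm$ is read as the $L^2(0,1)$-projection of $\tr^\pm v_h\circ\Phi_F$ onto $\mathcal{P}^q$, as the paper's proof does. The gap is exactly the obstacle you flag in Step~3, and neither of your remedies closes it.

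\emph{The arclength remedy fails.} The approximation space on $F$ consists of polynomials in the reference variable $t$ (or of $|\Phi_F'|^{-1}\mathcal{P}^q$ if one insists on $M^q$). It is related to polynomials in the arclength $s$ by the nonlinear substitution $s=s(t)$, not by a weight. So it does not contain $\mathcal{P}^q(s)$, and Bramble--Hilbert in $s$ is not available.

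\emph{The inverse-inequality fallback fails.} Inverse inequalities bound high derivatives by low ones. You would need the reverse, $|\tr^\pm v_h|_{H^k(F)}\lesssim h^{q+\nicefrac12-k}|\tr^\pm v_h|_{H^{q+1}(F)}$ for $1\le k\le q$. No slack in the power of $h$ helps when the right-hand side can vanish. Note also that the lower-order terms come from a non-constant speed $|\Phi_F'|$, not from curvature; an example is the term $s''(t)\,\partial_s\tr^\pm v_h$ in $\widehat{g}''(t)$, with $s(t)$ the arclength.

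In fact the obstacle cannot be removed: for $q\ge1$ the inequality as stated is false. Take a cell $K$ with $\Phi_K(\widehat{x}_1,\widehat{x}_2)=(h\phi(\widehat{x}_1),\,h\widehat{x}_2\beta(\widehat{x}_1))$, where $\phi(t)=(e^t-1)/(e-1)$ and $\beta=1/(\phi'(1+\phi))$. Let its straight bottom edge $F=[0,h]\times\{0\}$ be its only edge on $\Gamma$, and set $v_h=1+x_1/h$ on $K$ and $v_h=0$ elsewhere.
\begin{itemize}
\item Since $\det\nabla\Phi_K=h^2/(1+\phi(\widehat{x}_1))$, one has $|\det\nabla\Phi_K|\,(v_h\circ\Phi_K)=h^2$, so $v_h\in Q^q(\cT_h)$.
\item The trace of $v_h$ on $F$ is affine in arclength, so the right-hand side is zero.
\item $\tr v_h\circ\Phi_F=1+\phi$ lies neither in $\mathcal{P}^q$ nor in $|\Phi_F'|^{-1}\mathcal{P}^q$, so the left-hand side is positive for $\tau>0$.
\end{itemize}

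The paper's proof does not overcome this either. It moves $|\det\nabla\Phi_{K_F^\pm}|^{-1}|_F$ outside $(\widehat{\Pi}-Id)$, which is not allowed: $\widehat{\Pi}$ does not commute with multiplication by a non-constant function. Indeed, since $\tr^\pm\widehat{v}_h(\cdot,0)\in\mathcal{P}^q$, the resulting chain would even give zero error.

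What your Steps~1--3 do prove is a bound that includes lower-order seminorms:
\[
\|\pitr^\pm v_h-\tr^\pm v_h\|_{L^2(F)}\le Ch^{\nicefrac12}|\tr^\pm v_h\circ\Phi_F|_{H^{q+1}(0,1)}\le C\sum_{k=1}^{q+1}h^{k}|\tr^\pm v_h|_{H^k(F)}.
\]
Under the usual curved-element assumption $|\Phi_F^{(j)}|\lesssim h^j$, this becomes $Ch^{q+1}\sum_{k=1}^{q+1}|\tr^\pm v_h|_{H^k(F)}$. The statement as written, even with $h^{q+1}$, does hold when the interface edges are parametrized with constant speed, which is the case in which your arclength argument applies.
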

\begin{proof}
   The error of the weighted $L^2(\Gamma)$-projection~\eqref{FEM:eq:Pi_h} can be bounded as
   \begin{align*}
      \|\pitr^\pm v_h &- \tr^\pm v_h\|_{L^2(\Gamma)}^2
      = \sum_{F \in \mathcal{F}_{\Gamma,h}}
      \|\pitr^\pm v_h - \tr^\pm v_h\|_{L^2(F)}^2 \\
      &= \sum_{F \in \mathcal{F}_{\Gamma,h}} \| (\pitr^\pm v_h - \tr^\pm v_h) \circ \Phi_F |\Phi_F'|^{\nicefrac12} \|_{L^2(0,1)}^2\\
      &\leq C h \sum_{F \in \mathcal{F}_{\Gamma,h}} \| (\pitr^\pm v_h - \tr^\pm v_h) \circ \Phi_F \|_{L^2(0,1)}^2 \\
      &= C h \sum_{F \in \mathcal{F}_{\Gamma,h}} \|(\widehat{\Pi}- Id)
      |\det \nabla \Phi_{K_F^\pm}|^{-1}\big|_F
      \tr^\pm\widehat{v}_h(\cdot,0)\|_{L^2(0,1)}^2\\
      &\leq C h^{-1}\sum_{F \in \mathcal{F}_{\Gamma,h}} \|(\widehat{\Pi}- Id)
      \tr^\pm\widehat{v}_h(\cdot,0)\|_{L^2(0,1)}^2 \\
      &\leq C h^{-1}\sum_{F \in \mathcal{F}_{\Gamma,h}} | \tr^\pm\widehat{v}_h(\cdot,0)|_{H^{q+1}(0,1)}^2
      \leq C h^{2q+1}\sum_{F \in \mathcal{F}_{\Gamma,h}} |\tr^\pm v_h|_{H^{q+1}(F)}^2,
   \end{align*}
   where $\widehat{v}_h$ was defined in~\eqref{FEM:eq:vh_transformation}.
\end{proof}

\subsection{Well-posedness}
In this section we show that the finite element formulation with projections~\eqref{HDD:eq:dHDD:FEM:proj} fulfills the assumptions of Theorem~\ref{lem:HDD:FEM:Wellposedness} and belongs to the considered well-posed perturbed Galerkin methods.
\begin{proof}[Proof of Theorem~\ref{lem:HDD:FEM:Wellposedness}]
  The mapping $\Div: W^q(\cT_h) \to Q^q(\cT_h)$ is surjective due to the commuting diagram property~\cite{Bespalov.Heuer:2011} and the surjectivity of $\Div: H(\Div,\OmG) \to L^2(\OmG)$, and so $\Div W^q(\cT_h) = Q^q(\cT_h)$. Moreover, as $\cT_h$ is a conforming mesh of $\Omega$ it holds $\jump{W^q(\cT_h)\cdot\vn} = M^q(\mathcal{F}_{\Gamma,h})$. %
  We set
  \begin{align*}
    \wideparen{\cQ} = \cQ^q(\cT_h) + \sum\nolimits_\pm \sqrt{\tau} \pitr^\pm \cQ^q(\cT_h) \circ  (\jmath^\pm)^{-1}, \quad
    \wideparen{\cM} = M^q(\mathcal{F}_{\Gamma,h}),
  \end{align*}
  i.e., it is $\wideparen{\cQ}^\circ = \cQ^q(\cT_h)$ and $\sqrt{\tau}\wideparen{\cQ}^\pm = \sqrt{\tau}\pitr^\pm\cQ^q(\cT_h)$.
  Then, with the Banach space adjoint $\wideparen{\cQ}'$ defined by~\eqref{eq:wideparencQ:dual}, we set
  \begin{align*}
    \wideparen{\cW} = \{ \vw \in W^q(\cT_h): \Div \vw \in \wideparen{\cQ}' \}
  \end{align*}
  i.e., $(\Div \wideparen{\cW})^\circ = \Div W^q(\cT_h)$ and $(\Div \wideparen{\cW})^\pm$ is determined from $(\Div \wideparen{\cW})^\circ$ by~\eqref{eq:wideparencQ:dual:2} and~\eqref{eq:wideparencQ:dual:1}.
  The projection based interpolation operator \[\Pi^{\Div}_{q+1}: H^r(\OmG)^2 \times H(\Div,\OmG) \to W^q(\cT),\ r > 0,\]  defined in~\cite{Bespalov.Heuer:2011} satisfies the commuting diagram property (de~Rham diagram) and is stable for any $r > 0$. %
  With these interpolation operators, the space $\wideparen{\cW}$ fulfills the assumptions of
  Lemma~\ref{lem:HDD:Galerkin:deRham} and Proposition~\ref{lem:HDD:Galerkin:inf-sup}.
  Note, that similar properties have been shown for the interpolation operator to edge element spaces in~\cite{Demkowicz.Babuska:2003}, which are relevant here due to the isomorphism of $\Div$ and $\operatorname{rot}_{2D}$ in $\IR^2$.
  Hence,~Theorem~\ref{lem:HDD:Galerkin:Wellposedness} applies and a unique solution of~\eqref{HDD:eq:dHDD:FEM:proj} exists. Finally,~\eqref{eq:HDD:Galerkin:Wellposedness} implies~\eqref{eq:HDD:FEM:Wellposedness}, and the proof is complete.
\end{proof}

\subsection{Error analysis}
Let $I_h$ denote the $L^2(\OmG)$ projection onto $Q^q(\mathcal{T})$.
\begin{lemma}[Best-approximation error in $Q_{\tau/(1+\tau)}$]
   Let $\mathcal{T}_h$ be quasi-uniform and shape-regular, $q\in\mathbb{N}_0$ and $u \in H^s(\OmG)$, $s > \tfrac12$.
   Then, there exists constants $C_1, C_2$ such that
   \begin{align}
      \inf_{v_h \in Q^q(\mathcal{T}_h)} \| u - v_h\|_{L^2(\OmG)} + \sum_{\pm}\tfrac{\sqrt{\tau}}{\sqrt{1+\tau}} \|\tr^\pm u - \pitr^\pm v_h\|_{L^2(\Gamma)}
      \leq \operatorname{err}(h,q,\tau,u,C_1,C_2)\ .
      \label{eq:FEM:bestapproximation_error:Q_t}
   \end{align}
   \label{lem:FEM:bestapproximation_error:Q_t}
\end{lemma}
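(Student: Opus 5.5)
I would bound the infimum by choosing one specific candidate, namely $v_h = I_h u$, the $L^2(\OmG)$-projection onto $Q^q(\mathcal{T}_h)$. The left-hand side of~\eqref{eq:FEM:bestapproximation_error:Q_t} then splits into a volume term and two trace terms, and these are treated separately. For the volume term I apply the standard approximation estimate for the $L^2$-projection onto mapped tensor-product polynomials on a shape-regular, quasi-uniform mesh of curved quadrilaterals. Transforming to $\widehat K$ via~\eqref{FEM:eq:vh_transformation} and using a Bramble--Hilbert argument gives $\|u - I_h u\|_{L^2(\OmG)} \leq C_1 h^{\min(q+1,s)} \|u\|_{H^s(\OmG)}$. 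The weight $|\det\nabla\Phi_K|$ is smooth and bounded above and below by multiples of $h^2$, so it only changes constants.

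For each trace term I would use the triangle inequality
\begin{align*}
\|\tr^\pm u - \pitr^\pm I_h u\|_{L^2(\Gamma)} \leq \|\tr^\pm u - \pitr^\pm \tr^\pm u\|_{L^2(\Gamma)} + \|\pitr^\pm(\tr^\pm u - \tr^\pm I_h u)\|_{L^2(\Gamma)},
\end{align*}
where $\pitr^\pm$ is extended to arbitrary $L^2(\Gamma)$ functions through the weighted projection~\eqref{FEM:eq:Pi_h}. The first piece is a projection error on the edges. Estimating it on each edge after mapping to $[0,1]$ yields $C h^{\min(q+1,s-\nicefrac12)}|\tr^\pm u|_{H^{s-\nicefrac12}(\Gamma)}$, which by the trace theorem (valid for $s>\tfrac12$) is at most $C h^{\min(q+\nicefrac12,s-\nicefrac12)}\|u\|_{H^s(\OmG)}$. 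For the second piece, the weighted projection is $L^2(F)$-stable up to a constant, because the weight $|\Phi_F'|$ is comparable to $h$ uniformly. It therefore suffices to bound $\|\tr^\pm(u - I_h u)\|_{L^2(\Gamma)}$.

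This last bound is the main obstacle, since $I_h$ is only an $L^2$-projection and we need a trace estimate of $u - I_h u$. I would use the local multiplicative trace inequality $\|w\|_{L^2(\partial K)}^2 \leq C(h^{-1}\|w\|_{L^2(K)}^2 + \|w\|_{L^2(K)}|w|_{H^1(K)})$ for $s\ge1$. For $\tfrac12<s<1$ I would instead use its fractional form $\|w\|_{L^2(\partial K)} \leq C(h^{-1/2}\|w\|_{L^2(K)} + h^{s-1/2}|w|_{H^s(K)})$, obtained by scaling from $\widehat K$. Applying it to $w = u - I_h u$ together with local approximation and stability of $I_h$ in $H^{\min(s,q+1)}$ gives $C h^{\min(q+\nicefrac12,s-\nicefrac12)}\|u\|_{H^s(\OmG)}$.

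Finally, summing over $\pm$ and multiplying by $\tfrac{\sqrt\tau}{\sqrt{1+\tau}}$, both trace contributions are absorbed into the $C_2$-term of~\eqref{eq:FEM:error_function}. The volume term supplies the $C_1$-term, which gives~\eqref{eq:FEM:bestapproximation_error:Q_t}. Lemma~\ref{lem:FEM:projection_error} is not needed in this route, because the projection is applied to $\tr^\pm u$ itself rather than to the discrete traces.
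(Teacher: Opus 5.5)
Your argument is correct, but it splits the trace term differently from the paper.

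The paper pivots around the \emph{discrete} trace. With $v_h = I_h u$ it writes $\tr^\pm u - \pitr^\pm v_h = \tr^\pm(u - v_h) + (\tr^\pm v_h - \pitr^\pm v_h)$. It bounds the first part by a global trace estimate in terms of $\|u - I_h u\|_{H^{1/2}(\OmG)}$. It bounds the second part by Lemma~\ref{lem:FEM:projection_error}, i.e.\ by $h^{q+1/2}$ times the edgewise $H^{q+1}(F)$-seminorms of $\tr^\pm I_h u$. These are then related back to $\|u\|_{H^s(\OmG)}$ through the trace theorem and the $H^s$-stability of $I_h$.

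You pivot instead around the projection of the \emph{exact} trace, i.e.\ the projection applied to $\tr^\pm u$. You therefore need an edgewise approximation estimate for $\tr^\pm u \in H^{s-1/2}$, the $L^2(F)$-stability of the projection, and local scaled trace inequalities for $u - I_h u$.

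The paper's split has a conceptual advantage: it isolates the effect of the projection as a purely geometric perturbation. On affine meshes the discrete traces already lie in $M^q(\mathcal{F}_{\Gamma,h})$, so that term vanishes.

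Your split is more robust in two ways. First, you never have to control $|\tr^\pm I_h u|_{H^{q+1}(F)}$. On curved cells this quantity is nonzero only through derivatives of the Jacobian factor, and bounding it requires inverse estimates. Second, your elementwise trace inequality is the more careful way to handle $\tr^\pm(u - I_h u)$. A global bound by $\|u - I_h u\|_{H^{1/2}(\OmG)}$ sits exactly at the exponent where the trace theorem fails. Moreover, $u - I_h u$ is in general not even in $H^{1/2}(\OmG)$, because $I_h u$ jumps across interior element edges.
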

\begin{proof}
   Using the trace theorem we find for any $v_h \in Q^q(\mathcal{T})$ that
   \begin{align*}
      \sum_{\pm}\sqrt{\tau} \|\tr^\pm (u - v_h)\|_{L^2(\Gamma)}
      \leq C\sqrt{\tau} \|u - v_h\|_{H^{\nicefrac12}(\OmG)}.
   \end{align*}
   Choosing $v_h = I_h u$ we find that
   \begin{align*}
      \inf_{v_h \in Q^q(\mathcal{T}_h)}
      \|u - v_h\|_{L^2(\OmG)} +
      \tfrac{\sqrt{\tau}}{\sqrt{1+\tau}} \|u - v_h\|_{H^{\nicefrac12}(\OmG)}
      \leq \operatorname{err}(h,q,\tau,u,C_1,C_2)
   \end{align*}
   for some $C_1, C_2, > 0$. Finally, using Lemma~\ref{lem:FEM:projection_error}, the trace theorem and the stability of $I_h$ in $H^{s}(\OmG)$ we conclude in~\eqref{eq:FEM:bestapproximation_error:Q_t}.
\end{proof}

\begin{lemma}[Consistency error]
   Let $\mathcal{T}_h$ be quasi-uniform and shape-regular, $q\in\mathbb{N}_0$ and $u \in H^s(\OmG)$, $s > \tfrac12$.
   Then, there exists a constant $C_2$ such that
  \begin{align}
     \inf_{v_h'\in Q^q(\cT_h)} \sum_\pm \sqrt{\tau} \|\pitr^\pm (u - v_h')\|_{L^2(\Gamma)} \leq \sqrt{1+\tau} \operatorname{err}(h,q,\tau,u,0,C_2)\ .
     \label{eq:FEM:consistencyerror}
  \end{align}
  \label{lem:FEM:consistencyerror}
\end{lemma}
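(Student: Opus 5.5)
We bound the infimum by choosing the particular test function $v_h' = I_h u$, the $L^2(\OmG)$ projection onto $Q^q(\mathcal{T}_h)$. By linearity of the projected traces it then suffices to estimate $\sum_\pm \sqrt{\tau}\|\pitr^\pm u - \pitr^\pm I_h u\|_{L^2(\Gamma)}$. Here $\pitr^\pm u$ is understood through~\eqref{FEM:eq:Pi_h} applied to $\tr^\pm u$, which is well defined since $u\in H^s(\OmG)$ with $s>\tfrac12$ has $L^2(\Gamma)$ traces.

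The key steps are as follows.

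\textbf{Step 1: stability of $\pitr^\pm$.} First I would show that $\pitr^\pm$ is uniformly $L^2(\Gamma)$-stable, i.e.\ $\|\pitr^\pm w\|_{L^2(\Gamma)} \le C\|\tr^\pm w\|_{L^2(\Gamma)}$. On each facet it is an $L^2(0,1)$ projection in the reference variable, while the physical norm carries the weight $|\Phi_F'|^{\nicefrac12}$. For a quasi-uniform, shape-regular mesh this weight is bounded above and below by multiples of $h^{\nicefrac12}$, so the equivalence constants cancel, as in the proof of Lemma~\ref{lem:FEM:projection_error}.

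\textbf{Step 2: trace estimate.} Stability gives
\[
\sum_\pm\sqrt{\tau}\|\pitr^\pm(u-I_hu)\|_{L^2(\Gamma)} \le C\sqrt{\tau}\sum_\pm\|\tr^\pm(u-I_hu)\|_{L^2(\Gamma)}.
\]
To bound the right-hand side I would use the multiplicative (local) trace inequality on each cell $K$ adjacent to $\Gamma$,
\[
\|w\|_{L^2(\partial K)}^2\le C\big(h^{-1}\|w\|_{L^2(K)}^2+\|w\|_{L^2(K)}|w|_{H^1(K)}\big),
\]
or its fractional version for $\tfrac12<s<1$. Combining it with the standard approximation properties of $I_h$, namely $\|u-I_hu\|_{L^2(K)}\le Ch^{\min(q+1,s)}\|u\|_{H^s}$ and the corresponding $H^{t}$ estimates, yields
\[
\|\tr^\pm(u-I_hu)\|_{L^2(\Gamma)}\le C h^{\min(q+\nicefrac12,\,s-\nicefrac12)}\|u\|_{H^s(\OmG)}.
\]
This is the same bound already used in the proof of Lemma~\ref{lem:FEM:bestapproximation_error:Q_t}, where $\|u-I_hu\|_{H^{\nicefrac12}}$ was controlled.

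\textbf{Step 3: conclusion.} Writing $\sqrt{\tau}=\sqrt{1+\tau}\,\tfrac{\sqrt{\tau}}{\sqrt{1+\tau}}$ turns the bound into
\[
\sqrt{1+\tau}\;C_2\tfrac{\sqrt{\tau}}{\sqrt{1+\tau}}h^{\min(q+\nicefrac12,s-\nicefrac12)}\|u\|_{H^s(\OmG)}=\sqrt{1+\tau}\,\operatorname{err}(h,q,\tau,u,0,C_2),
\]
which is exactly~\eqref{eq:FEM:consistencyerror}.

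\textbf{Main obstacle.} I expect the delicate point to be the trace estimate in the low-regularity range $s\in(\tfrac12,1)$. There the local trace inequality must be used in its fractional form, and the $H^{\nicefrac12+\epsilon}$ approximation of the discontinuous projection $I_h$ must be handled elementwise, i.e.\ in broken norms. If this proves awkward, an alternative is to replace $I_h u$ by a local quasi-interpolant whose trace approximation on $\Gamma$ is directly available, and then argue as above via the stability of $\pitr^\pm$.
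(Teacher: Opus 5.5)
Your proposal is correct and is essentially the paper's argument: take $v_h' = I_h u$, use the $L^2(\Gamma)$-stability of $\pitr^\pm$, bound $\sqrt{\tau}\,\|\tr^\pm(u - I_h u)\|_{L^2(\Gamma)}$ by $C\sqrt{\tau}\,h^{\min(q+\nicefrac12,\,s-\nicefrac12)}\|u\|_{H^s(\OmG)}$, and write $\sqrt{\tau}=\sqrt{1+\tau}\,\tfrac{\sqrt{\tau}}{\sqrt{1+\tau}}$. The paper does the trace step with a global trace theorem and $\|u - I_h u\|_{H^{\nicefrac12}(\OmG)}$; your cellwise scaled trace inequality in broken norms is the more careful version of that step, because the trace map is not bounded on $H^{\nicefrac12}$ and the cellwise discontinuous $I_h u$ in general does not lie in $H^{\nicefrac12}(\OmG)$.
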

\begin{proof}
   With the triangle inequality and the trace theorem we have
   \begin{align*}
      \inf_{v_h'\in Q^q(\cT_h)} \sum_\pm \sqrt{\tau} \|\pitr^\pm (u - v_h')\|_{L^2(\Gamma)}
      &\leq C \inf_{v_h'\in Q^q(\cT_h)} \sum_\pm \sqrt{\tau} \|\tr^\pm (u - v_h')\|_{L^2(\Gamma)} \\
      &\leq C \inf_{v_h'\in Q^q(\cT_h)} \sum_\pm \sqrt{\tau} \| u - v_h'\|_{H^{\nicefrac12}(\OmG)}.
   \end{align*}
   Finally, inserting $v_h' = I_h u$ we conclude in~\eqref{eq:FEM:consistencyerror}.
\end{proof}

\begin{proof}[Proof of Theorem~\ref{lem:FEM:discretisation_error}]
   For the solution $(\vq_h, u_h, \mu_h) \in W^q(\mathcal{T}_h) \times Q^q(\mathcal{T}_h) \times M^q(\mathcal{F}_{\Gamma,h})$, $q\in\mathbb{N}_0$, of the conforming formulation~\eqref{HDD:eq:dHDD:FEM:proj} it holds~\eqref{eq:HDD:Galerkin:Error} for
   \begin{align*}
      \wideparen{u} &= u_h + \sum\nolimits_\pm \sqrt{\tau} \pitr^\pm u_h \circ (\jmath^\pm)^{-1} \in \wideparen{\cQ}, \\
      \wideparen{\vq} &= \vq_h \text{ in } \OmG \text{ with }
      \Div\wideparen{\vq} \in \wideparen{\cW}_f, \\
      \wideparen{\mu} &= \mu_h \in \wideparen{\cM},
   \end{align*}
   where $\wideparen{\cQ}$, $\wideparen{\cW}$ and $\wideparen{\cM}$ are defined as in the proof of Theorem~\ref{lem:HDD:FEM:Wellposedness} and $\wideparen{\cW}_f$ is defined as $\wideparen{\cW}$ satisfying~\eqref{eq:wideparencQ:dual:2:f} instead of~\eqref{eq:wideparencQ:dual:2}.
   Then, using the interpolation error estimates for $\cW^q(\cT_h)$ and $\cM^q(\cF_{\Gamma,h})$, the estimates of the best approximation error in $\cQ_{\tau/(1+\tau)}$ in Lemma~\ref{lem:FEM:bestapproximation_error:Q_t} and the estimate of the consistency error in Lemma~\ref{lem:FEM:consistencyerror} we find~\eqref{eq:FEM:discretisation_error:uqmu}.
   Moreover, the remaining error terms in~\eqref{eq:HDD:Galerkin:Error} imply
   \begin{align}
      \sum_\pm \tfrac{\sqrt{\tau}}{\sqrt{1+\tau}} \| \tr^\pm u - \pitr^\pm u_h \|_{L^2(\Gamma)}
      + \tfrac{1}{\sqrt{1+\tau}} \big\|\jump{\vq_h\cdot\bvec{n}}\big\|_{L^2(\Gamma)}
      \leq \sqrt{1+\tau}\operatorname{err}(h,q,\tau, u, C_1, C_2).
      \label{eq:FEM:discretisation_error:utrace}
   \end{align}
   Now, using $\tr^+ u = \tr^-u$ on $\Gamma$, the triangle inequality and~\eqref{eq:FEM:discretisation_error:utrace} we obtain
		\begin{align*}
			\sqrt{\tau} \big\| \jump{\pitr u_h} \big\|_{L^2(\Gamma)}
			&= \sqrt{\tau} \big\| \pitr^+u_h - \tr^+u - (\pitr^-u_h - \tr^-u)  \big\|_{L^2(\Gamma)} \\
			&\leq \sum_\pm \sqrt{\tau} \| \tr^\pm u - \pitr^\pm u_h \|_{L^2(\Gamma)}
			\leq \sqrt{1+\tau}\sum_\pm \tfrac{\sqrt{\tau}}{\sqrt{1+\tau}} \| \tr^\pm u - \pitr^\pm u_h \|_{L^2(\Gamma)} \\
			&\leq (1+\tau)\operatorname{err}(h,q,\tau, u, C_1, C_2).
		\end{align*}
		Similarly, using $\tr^\pm u = \mu$ on $\Gamma$, the triangle inequality and~\eqref{eq:FEM:discretisation_error:utrace} we get
		\begin{align*}
			\sqrt{\tau} \big\| \mean{\pitr u_h} - \mu_h \big\|_{L^2(\Gamma)}
			&\leq \sqrt{1+\tau}\sum_\pm \tfrac{\sqrt{\tau}}{\sqrt{1+\tau}} \| \tr^\pm u - \pitr^\pm u_h \|_{L^2(\Gamma)}
			+ \sqrt{\tau} \|\mu - \mu_h \|_{L^2(\Gamma)} \\
			&\leq C\sqrt{1+\tau} \left( \sqrt{1+\tau} + \sqrt{\tau}\right)\operatorname{err}(h,q,\tau, u, C_1, C_2)
		\end{align*}
		and so~\eqref{eq:FEM:discretisation_error:tr_u}. Finally, testing~\eqref{HDD:eq:dHDD:FEM:proj:3} with $\nu_h = \jump{\vq_h\cdot\vn}$ we find that
		\begin{align*}
		 \big\|\jump{\vq_h\cdot\bvec{n}}\big\|_{L^2(\Gamma)} &\leq \tau \sum_\pm \big\| \pitr^\pm u_h - \mu_h \big\|_{L^2(\Gamma)} \\
		 &\leq \sqrt{\tau} \left( \sqrt{\tau}\big\| \jump{\pitr u_h} \big\|_{L^2(\Gamma)} + \sqrt{\tau} \big\| \mean{\pitr u_h} - \mu_h \big\|_{L^2(\Gamma)}\right) \\
		 &\leq \sqrt{\tau}(1+\tau)\operatorname{err}(h,q,\tau, u, C_1, C_2).
		\end{align*}
		Taking the best of this estimate and the estimate~\eqref{eq:FEM:discretisation_error:utrace} for $\|\jumpt{\vq_h\cdot\bvec{n}}\|_{L^2(\Gamma)}$ and using $\min(\sqrt{1+\tau}, \sqrt{\tau}(1+\tau)) \leq 2\sqrt{\tau}$ for all $\tau \geq 0$, we conclude in the improved estimate~\eqref{eq:FEM:discretisation_error:qn}.
\end{proof}

\section{Numerical experiments}
\label{sec:NumExs}
\begin{figure}[bt]
	\centering
	\begin{subfigure}{0.49\textwidth}
		\centering
		\includegraphics[height=4cm]{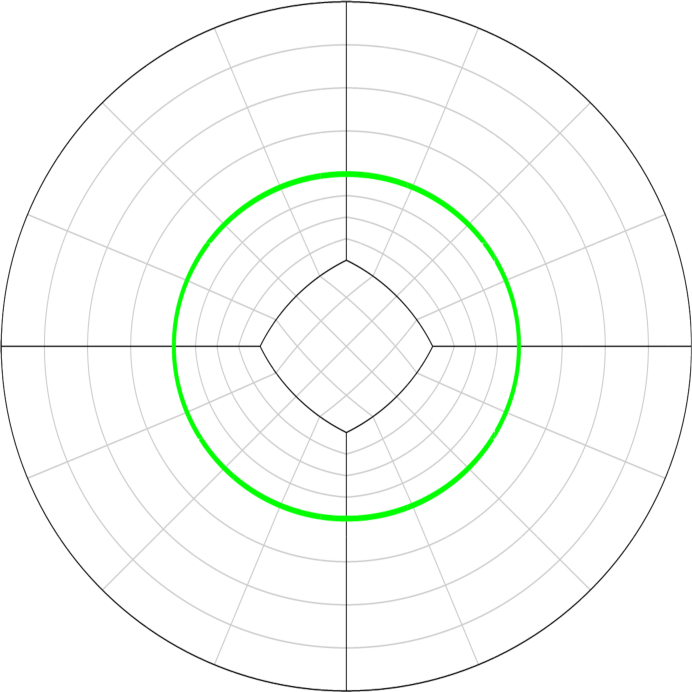}
		\label{NE:fig:cQ3_kmsh}
	\end{subfigure}
	\hfill
	\begin{subfigure}{0.49\textwidth}
		\centering
		\includegraphics[height=4cm]{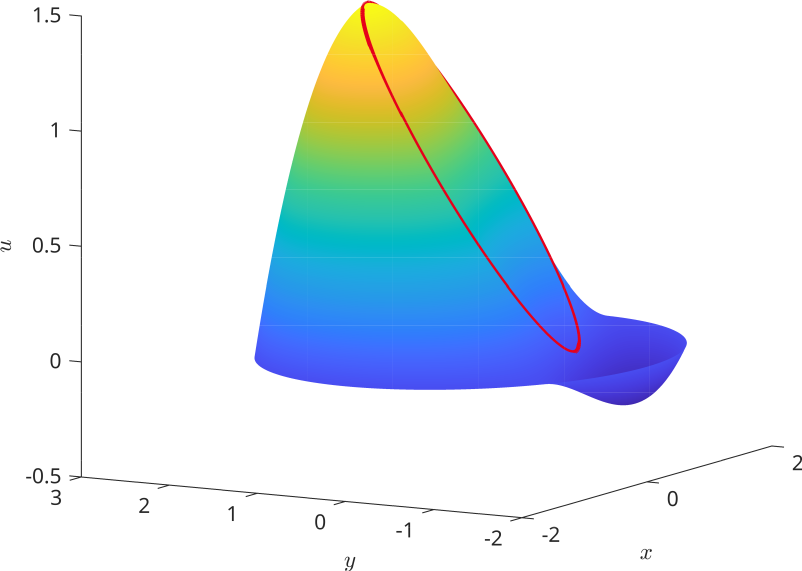}
		\label{ne:fig:cQ3_k_u}
	\end{subfigure}
	\caption{Computational domain $\Omega$ and reference solution component $u$.}
	\label{NE:fig:CMesh_Sol_u}
\end{figure}

In this last section, we investigate the finite element discretization of the HMDD formulation numerically. %
For this, we implemented the finite element method described in Sec.~\ref{sec:FEM} using the numerical \texttt{C++}-library \texttt{Concepts~2}\cite{Concepts:2023}.In our computations, we employ meshes with curved quadrilaterals and use a direct solver, either \texttt{SuperLU} or \texttt{MUMPS}, while \texttt{Concepts~2}'s \texttt{MATLAB} interface is accessed for graphical analysis and visualization of the numerical results.

\begin{figure}[tb]
	\begin{subfigure}{.32\textwidth}
		\centering
		\includegraphics[width=\textwidth]{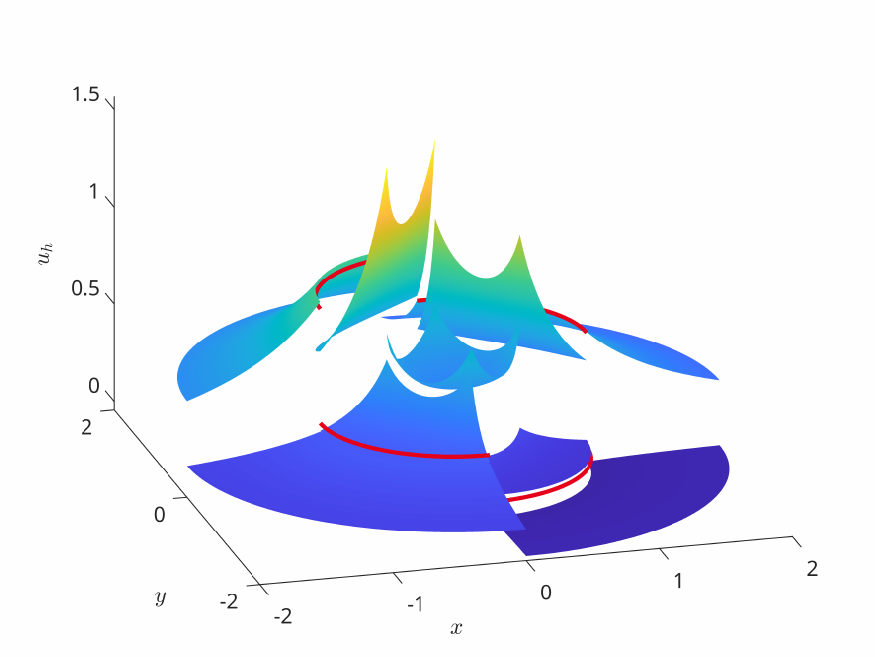}
	\end{subfigure}
	\hfill
	\begin{subfigure}{.32\textwidth}
		\centering
		\includegraphics[width=\textwidth]{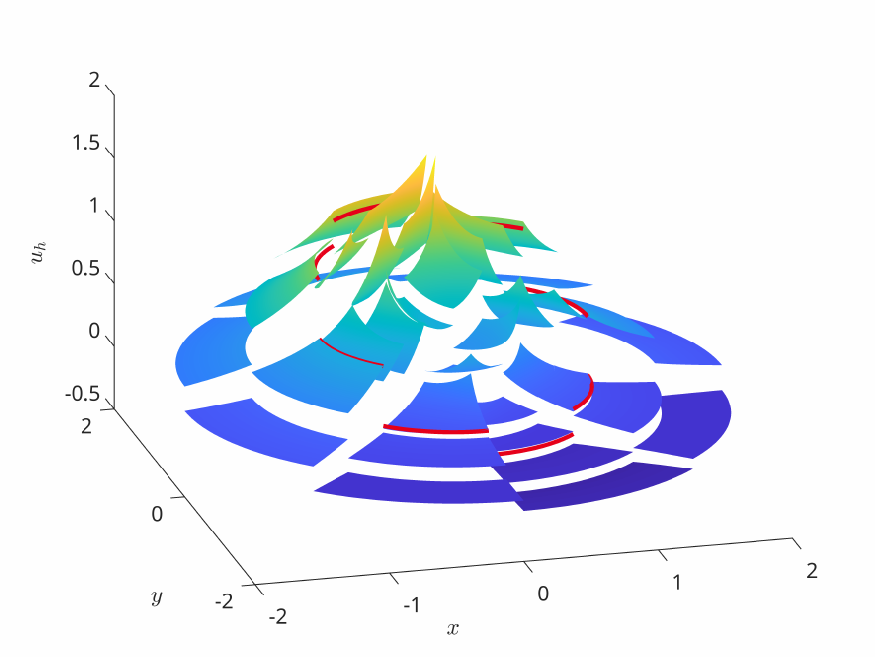}
	\end{subfigure}
	\hfill
	\begin{subfigure}{.32\textwidth}
		\centering
		\includegraphics[width=\textwidth]{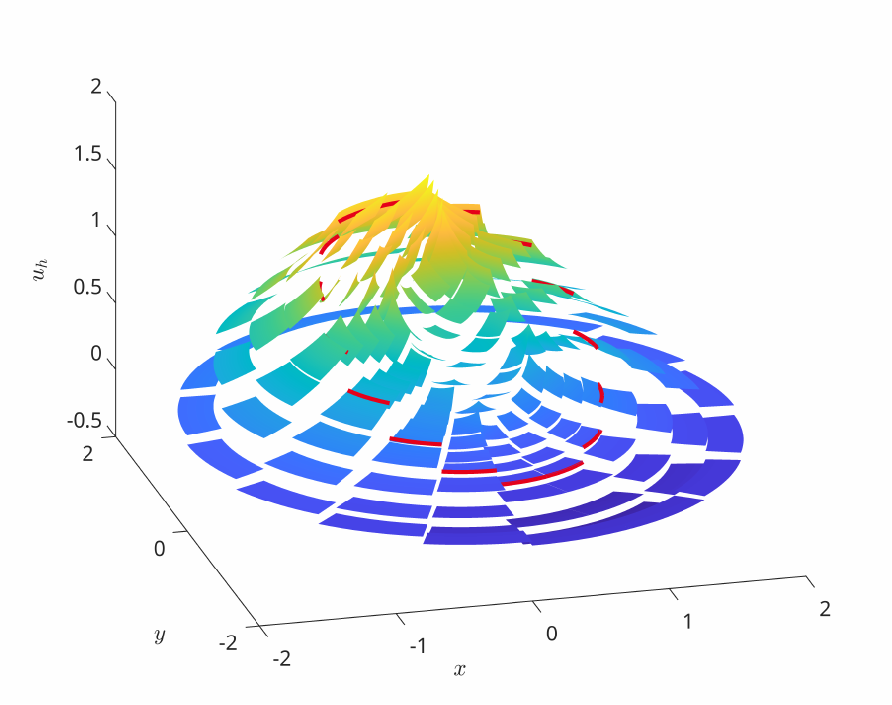}
	\end{subfigure}
	\begin{subfigure}{.32\textwidth}
		\centering
		\includegraphics[width=\textwidth]{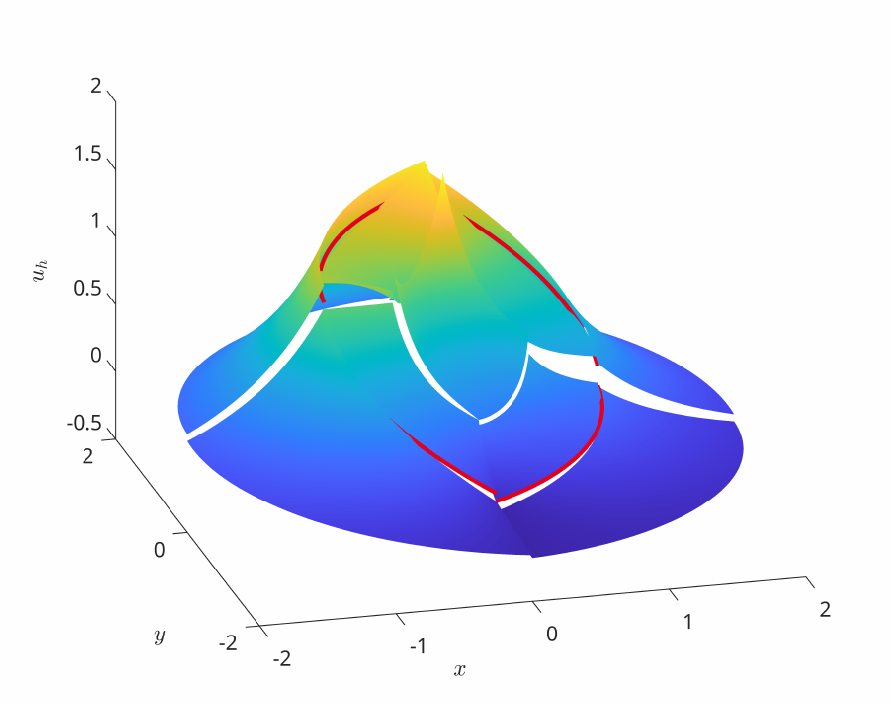}
	\end{subfigure}
	\hfill
	\begin{subfigure}{.32\textwidth}
		\centering
		\includegraphics[width=\textwidth]{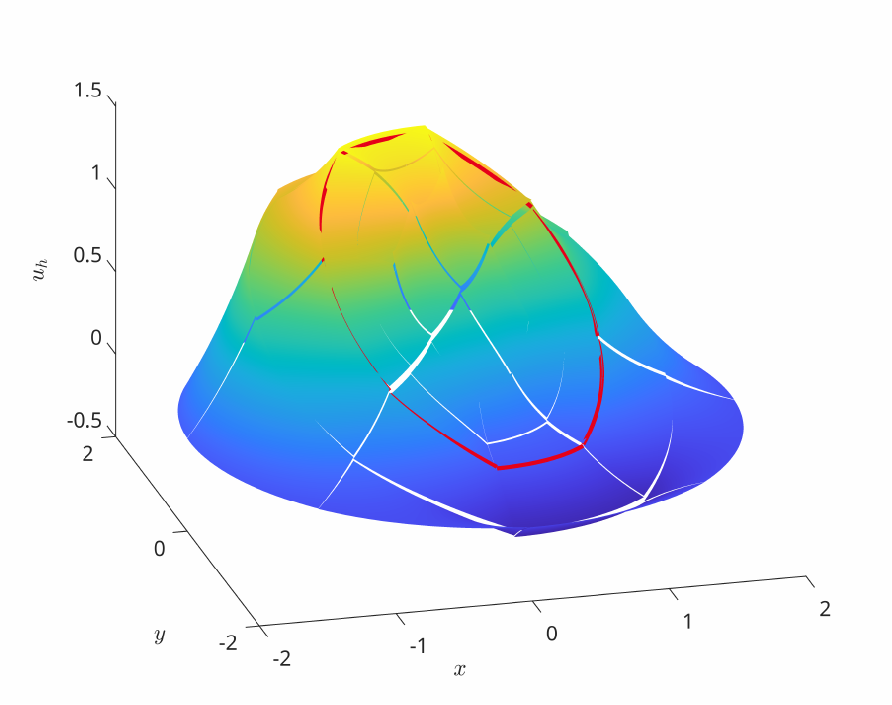}
	\end{subfigure}
	\hfill
	\begin{subfigure}{.32\textwidth}
		\centering
		\includegraphics[width=\textwidth]{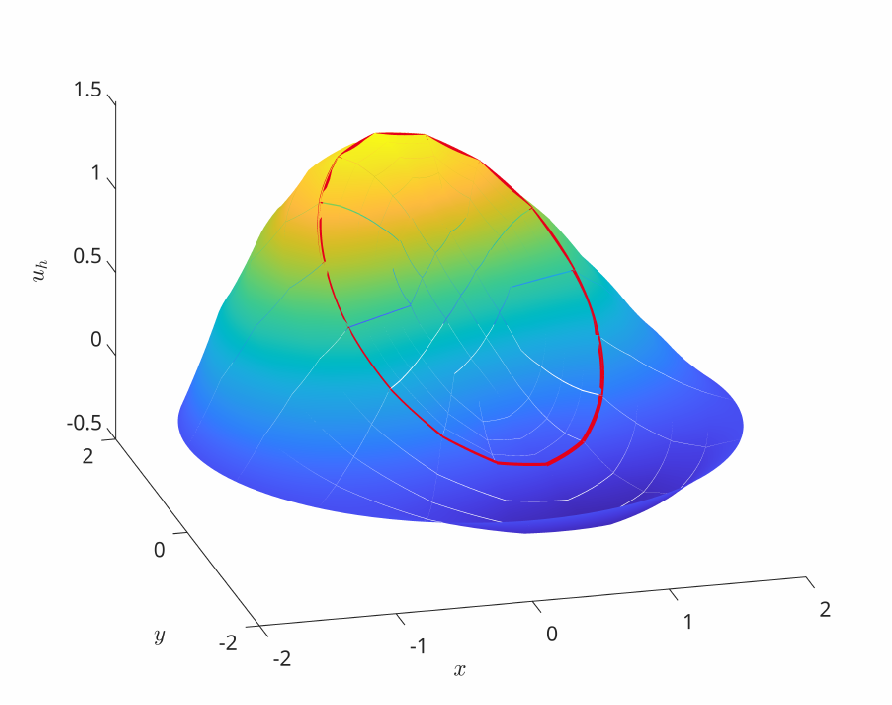}
	\end{subfigure}
	\caption{Numerical solution $u_h$ and $\mu_h$ (red lines) at polynomial order $q=0$ (upper half) and $q=1$ (lower half) for $0,\, 1$ and $2$ mesh refinements (from left to right).}
	\label{NE:fig:App_Sol_u_cQ}
\end{figure}

For our investigation, we consider a radially symmetric domain $\Omega$ of radius $2$, where the interface $\Gamma$ -- on which the hybrid variable $\mu$ is defined -- is given by the unit circle. A sketch of the geometry is shown on the left-hand side of Fig.~\ref{NE:fig:CMesh_Sol_u} with the green circle marking the interface $\Gamma$. The initial mesh is outlined in dark gray while the light gray lines show the mesh after two refinements.
Further, we consider a material function $\kappa(\vx)~=~16\chi_{[0,1)}(r)~+~1\chi_{(1,2]}(r)$ in polar coordinates with a jump at the interface and the right-hand side
\[f(\vx) =
\begin{cases}
	\frac{47}{2}\sqrt{2+\sqrt{2}}r\sin(\varphi)-\frac{47}{2}\sqrt{2-\sqrt{2}}r\cos(\varphi)+1, & 0 \leq r < 1,\\
	\sqrt{2+\sqrt{2}}r\sin(\varphi)-\sqrt{2-\sqrt{2}}r\cos(\varphi)+1, & 1 < r \leq 2.
\end{cases}\]

The corresponding reference solution component $u$, which is analytic on $\OmG$, is shown on the right-hand side of Fig.~\ref{NE:fig:CMesh_Sol_u} with the red circle being the solution component $\mu$.\\
First, we investigate the the numerical solution obtained by the HMDD method for polynomial orders $q=0,\,1$ at a fixed parameter $\tau=10$. The resulting numerical solution components $u_h$ and $\mu_h$ at mesh widths $h=2^0,\, 2^{-1},\, 2^{-2}$ are shown in Fig.~\ref{NE:fig:App_Sol_u_cQ}.
The depictions of the numerical solution components $u_h$ and $\mu_h$ (red lines) in Fig.~\ref{NE:fig:App_Sol_u_cQ} nicely illustrate the behavior of the HMDD method and indicate convergence of the numerical towards the reference solution and at a faster rate for $q = 1$.

\begin{figure}[h]
	\centering
	\begin{subfigure}{.3\textwidth}
		\centering
		\includegraphics{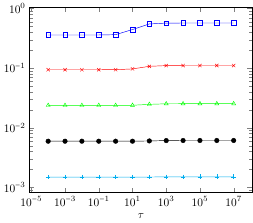}
		\caption{$\|u-u_h\|_{L^2(\Omega\setminus\Gamma)}$}
	\end{subfigure}
	\hfill
	\begin{subfigure}{.3\textwidth}
		\centering
		\includegraphics{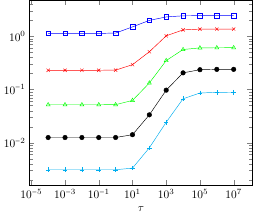}
		\caption{$\|\mathbf{q}-\mathbf{q}_h\|_{L^2(\Omega\setminus\Gamma)}$}
	\end{subfigure}
	\hfill
	\begin{subfigure}{.3\textwidth}
		\centering
		\includegraphics{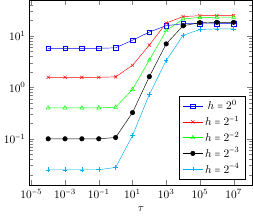}
		\caption{$\|\mathrm{div}(\mathbf{q}-\mathbf{q}_h)\|_{L^2(\Omega\setminus\Gamma)}$}
	\end{subfigure}
	\\
	\begin{subfigure}{.3\textwidth}
		\centering
		\includegraphics{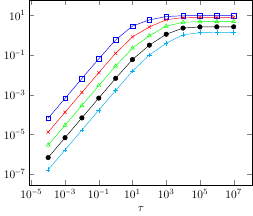}
		\caption{$\|\jump{\mathbf{q}_h\cdot\mathbf{n}}\|_{L^2(\Gamma)}$}
	\end{subfigure}
	\hfill
	\begin{subfigure}{.3\textwidth}
		\centering
		\includegraphics{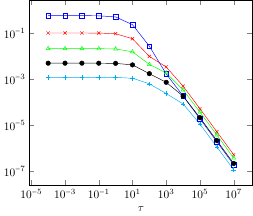}
		\caption{$\|\jump{\pitr u_h}\|_{L^2(\Gamma)}$}
	\end{subfigure}
	\hfill
	\begin{subfigure}{.3\textwidth}
		\centering
		\includegraphics{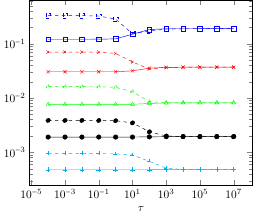}
		\caption{$\|\mu-\mu_h\|/\|\mean{u-\pitr u_h}\|$}
	\end{subfigure}
	\caption{Error plots for (a) $u_h$, (b) $\mathbf{q}_h$, (c) $\Div\mathbf{q}_h$ on $\Omega\setminus\Gamma$, (d) $\jump{\mathbf{q}_h\cdot\mathbf{n}}$, (e) $\jump{\pitr u_h}$ and (f) $\mu_h$ (solid), $\pitr u_h$ (dashed) on $\Gamma$ in dependence of $\tau$ for order $q=1$.}
	\label{NE:fig:cQ3r_l_errors_vs_tau_p1_k16}
\end{figure}

\subsection{Discretization error in dependence of $h$ and $\tau$}
We now investigate the discretization error for different refinement levels and values of $\tau$, where the results for $q=1$ are shown in Fig.~\ref{NE:fig:cQ3r_l_errors_vs_tau_p1_k16}. We have performed the experiments for different orders $q$ of the finite element method and observe the following behavior:
	\begin{subequations}
	\label{eq:NE:eoc}
	\begin{align}
		\| u - u_h \|_{L^2(\OmG)} + \| \mu - \mu_h \|_{L^2(\Gamma)} &\leq \hspace{2.9em} C_1 h^{q+1},
		\label{eq:NE:eoc:1}\\
		\| \vq - \vq_h \|_{L^2(\OmG)} &\leq \hspace{2.9em} C_1 h^{q+1} + \tfrac{C_2 h\tau}{C_3 + h\tau} h^{q+\frac12},
		\label{eq:NE:eoc:2}
		\\
		\| \Div(\vq - \vq_h) \|_{L^2(\OmG)} &\leq \hspace{2.9em}C_1 h^{q+1} + \tfrac{C_2 h\tau}{C_3 + h\tau} h^{q-\frac12},
		\label{eq:NE:eoc:3}
		\\
		\| \jump{ \vq_h\cdot \vn} \|_{L^2(\Gamma)} &\leq \hspace{7.0em}\tfrac{C_2 h \tau}{C_3 + h\tau} h^{q},
		\label{eq:NE:eoc:5}
		\\
		\| \jump{ \pitr u_h} \|_{L^2(\Gamma)} &\leq \tfrac{1}{(C_1)^{-1} + h\tau} h^{q+1},
		\label{eq:NE:eoc:6}
	\end{align}
	\label{NE:eq:conj}
\end{subequations}
	where all constants are independent of $\tau$, but may depend on $q$.\\
	First, we observe that all error contributions in~\eqref{eq:NE:eoc:2} are bounded for fixed mesh-width $h$ even for large values $\tau$.
	This means, that for large $\tau$ we do not see an increase like $\sqrt{1+\tau}$, $\tau$ or $1+\tau$ as in the error estimates~\eqref{eq:FEM:discretisation_error}.
    Then, we see a uniform convergence of $u_h$ and $\mu_h$ in $\tau$, with a rate of $q+1$ which corresponds to the estimates~\eqref{eq:FEM:discretisation_error:uqmu} for $\tau = 0$. That means that even for large values of $\tau$ the convergence rates are the optimal ones.
    For the error in $\vq_h$ and $\Div\vq_h$, we observe different convergence behavior for different values of $\tau$, which has similarities with the
    error estimate in~\eqref{eq:FEM:discretisation_error:uqmu}. Note, that $\tfrac{C_2 h\tau}{C_3 + h\tau} \to C_2$ for $\tau\to \infty$ and fixed mesh-width $h$ and $\tfrac{C_2 h\tau}{C_3 + h\tau} \to 0$ for $h \to 0$ and fixed $\tau$. %
    For the $L^2(\OmG)$-error of $\vq_h$ we observe a rate of $q+1$ for small values of $\tau$ and a reduced rate of $q+\nicefrac12$ for large values of $\tau$, which is in agreement with~\eqref{eq:FEM:discretisation_error:uqmu} without the factor $\sqrt{1+\tau}$. For the $L^2(\OmG)$-error of $\Div\vq_h$ we observe a rate of $q+1$ for small values of $\tau$ and a reduced rate of $q-\nicefrac12$ for large values of $\tau$. This is not in contradiction to~\eqref{eq:FEM:discretisation_error:uqmu} due to the factor $\sqrt{1+\tau}$, which is not apparent in the numerical results. %
    Moreover, this reduced rate of $q-\nicefrac12$ is only visible for large values of $h$ since the factor $\tfrac{C_2 h\tau}{C_3 + h\tau} \leq C(\tau)h$ leads to an asymptotic rate of $q+\nicefrac12$ for fixed $\tau$.
    More precisely, for large values of $\tau$ the respective second term of~\eqref{eq:NE:eoc:2} and~\eqref{eq:NE:eoc:3} dominates if the mesh-width $h$ is not small, where the respective first term dominates if the mesh-width becomes small enough. This leads to zones of different local convergence rates for $\vq_h$ and $\Div \vq_h$ when $\tau$ is fixed. This shall be investigated in the following subsection.
    The jump of $\vq\cdot\vn$ tends to $0$ like $\tau$ for $\tau \to 0$ and fixed mesh-width $h$, where for small and fixed $\tau$ we observe a rate of $q+1$ in $h$, while it is $q$ for large values $\tau$. The jump of $\pitr u_h$ tends to $0$ like $\nicefrac{1}{\tau}$ for $\tau \to \infty$ and fixed
    mesh-width $h$, where for fixed $\tau$ we observe a rate of $q+1$ in $h$ for small $\tau$ and a rate of $q$ for large $\tau$.

\subsection{Convergence behavior for fixed $\tau$}
\begin{figure}[tb]
  \input{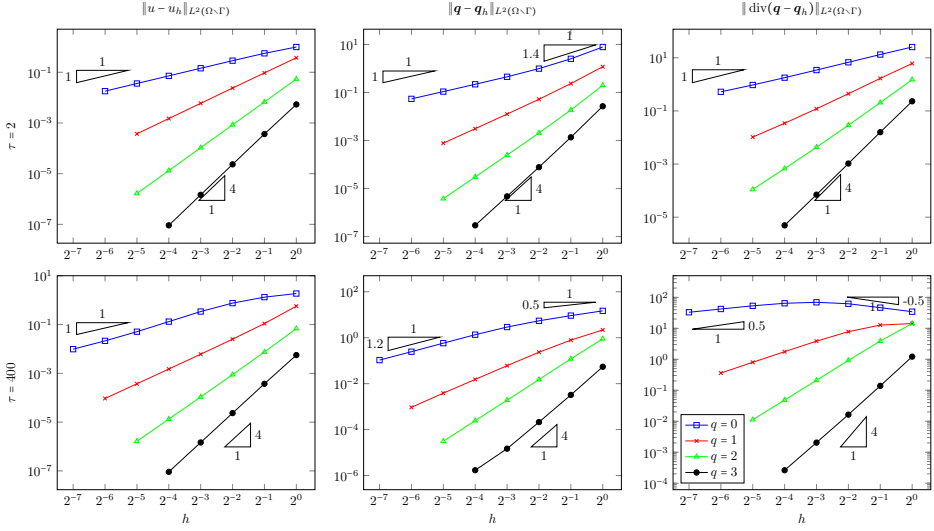}
  \caption{Error plots for $u_h$, $\mathbf{q}_h$ and $\mathrm{div}\mathbf{q}_h$ on $\Omega\setminus\Gamma$ for order $q=0,\,1,\,2,\,3$ and $\tau=2$ (upper row) and $\tau =400$ (lower row). The slope triangles indicate the slope corresponding to the behavior expected by~\eqref{eq:NE:eoc}, and the rates of $1.4$ and $1.2$ for the error in $q_h$ are exceptionally the observed slopes.}
  \label{NE:fig:cQ3r_regimes_upper}
\end{figure}

\begin{figure}[tb]
  \input{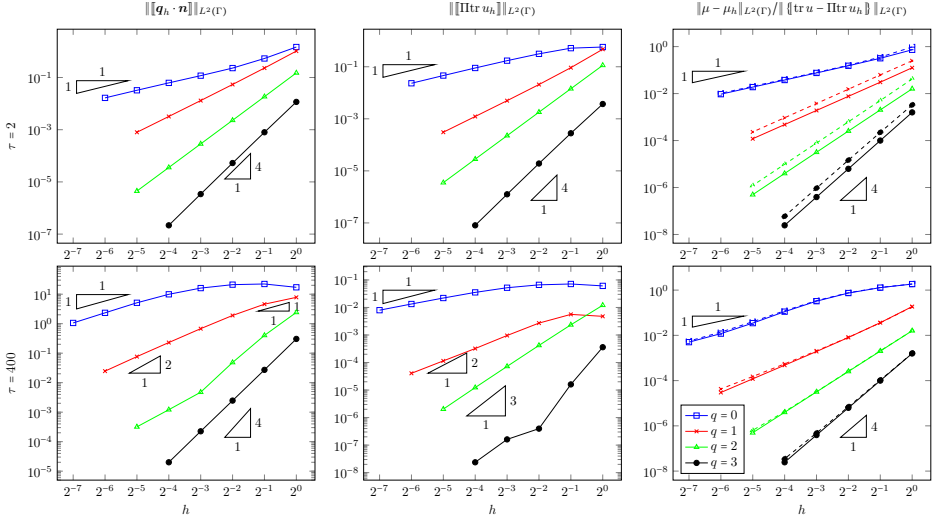}
  \caption{Error plots for $\jump{\mathbf{q}_h\cdot\mathbf{n}}$, $\jump{\pitr u_h}$, $\mu_h$ (solid), $\mean{\pitr u_h}$ (dashed) on $\Gamma$ for order $q=0,\,1,\,2,\,3$ and $\tau=2$ (upper row) and $\tau=400$ (lower row). The slope triangles indicate the slope corresponding to the behavior expected by~\eqref{eq:NE:eoc}.}
  \label{NE:fig:cQ3r_regimes_lower}
\end{figure}

To illustrate the change of different local convergence rates we have performed numerical experiments for $\tau = 2$ and $\tau = 400$. The results are shown in Fig.~\ref{NE:fig:cQ3r_regimes_upper} and Fig.~\ref{NE:fig:cQ3r_regimes_lower}.
For the $L^2(\OmG)$-error of $u_h$ we observe the same local convergence rates of $q+1$ for both values of $\tau$. %
We see a reduced convergence rate $q+\nicefrac12$ for the $L^2(\OmG)$-error of $\vq_h$ for $\tau = 400$ and $q = 0$ for larger mesh-widths and a convergence rate higher than~$1$ for smaller mesh-widths. For $\tau = 2$ and $q = 0$ we see a higher convergence rate than~$1$ for large mesh-widths and it tends to $1$ for smaller values. For higher values of $q$ we do not see a change of the convergence behavior so clearly as for $q = 0$, and a convergence rate of $q+1$ is observed for smaller mesh-widths.  For fixed value of $\tau$ we expect to see three zones, a local convergence of $q+\nicefrac12$ changes to more than $q+1$ for smaller mesh-width while changing to $q+1$ when the mesh is further refined.
For the $L^2(\OmG)$-error of $\Div \vq_h$ we see for $\tau = 400$ and $q = 0$ a reduced rate of $-\nicefrac12$ which turns into a rate of $\nicefrac12$ for smaller mesh-widths. For $\tau = 2$ we see for $q = 0$ a convergence rate of $1$, where we expect in view of~\eqref{eq:NE:eoc:3} an asymptotic rate of $\nicefrac12$ for $h \to 0$. Again, for higher $q$ a change of the convergence rates is not that clearly visible, but an optimal rate of $q+1$.
For the jump of $\vq_h \cdot \vn$ we observe convergence rates of $q+1$ for $\tau = 2$, where the error levels are lower than for $\tau = 400$ when comparing the same mesh-width. For $\tau = 400$ we see reduced rates for coarse meshes, which tends to $q+1$ when the mesh-width becomes small enough.
A similar behavior is observed for the jump of $\pitr u_h$, where the error levels are lower for $\tau = 400$ than for $\tau = 2$. For $\tau = 400$ the local convergence rates are lower on coarse meshes and turn to the optimal rates on finer meshes. For the error in $\mu_h$ or the mean of $\pitr u_h$ we observe the same convergence rates of $q+1$ for $\tau = 2$ and $\tau = 400$.

\section{Conclusion \& Outlook}
In this work, the HMDD method, a mixed domain decomposition formulation with Lehrenfeld–Schöberl stabilization terms inspired by hybridized discontinuous Galerkin methods, was considered. A corresponding consistent and well-posed variational formulation was identified in which the primal variable and the divergence of the dual variable are sums of $L^2$ functions on the subdomains and $L^2$ distributions on the skeleton depending on the stabilization parameter $\tau$. %
A conforming finite element discretization with Raviart-Thomas elements for the dual variable and piecewise polynomials for the primal and hybrid variables was shown to belong to a class of perturbed Galerkin methods whose well-posedness was proven analogously to the variational formulation of HMDD. The discretization error has been analyzed and investigated in numerical experiments in terms of the stabilization parameter $\tau$. %
The focus of this work was in the numerical analysis of the entire HMDD formulation. However, the well-posedness of patch problems whose degrees of freedom can be eliminated as well as the symmetry, positive definiteness and sparsity of the condensed system on the skeleton should be transferable from in HDG to HMDD.

The application to triangular meshes in two dimensions or hexahedral or tetrahedral meshes in three dimensions seems be straightforward, where the proof for well-posedness relies only on the existence of a continuous projection operator $(H^{\nicefrac12}(\OmG))^d \cap H(\Div,\OmG) \to \wideparen{\cW}$ as defined in Lemma~\ref{lem:HDD:Galerkin:deRham}. %
The extension to other discretization methods inside the patches like in isogeometric analysis or generalized FEM seems to be straightforward.

The presented mixed domain decomposition formulation with stabilization parameter $\tau > 0$ can be potentially applied in the mortar setting of non-matching grids~\cite{Arbogast.Cowsar.Wheeler.Yotov:2000,Arbogast.Pencheva.Wheeler.Yotov:2007} where the introduction of the variational framework shall prove useful.
To work with the variational framework might be essential for the application to further problems like the Helmholtz equation~\cite{Modave.ChaumetFrelet:2023} or for magnetostatics or more general electromagnetic problems.

\section*{Acknowledgment}
This work was supported by Time-X and by the Graduate School CE within the Profile Topic Computational Engineering at Technische Universität Darmstadt. %
We thank Mario Mally and Michael Wiesheu, funded by the Collaborative Research Centre – TRR361/F90: CREATOR, for their fruitful discussions and helpful insights.%
The second author has used AI tools throughout the paper for spelling and grammar checks.

\bibliographystyle{siamplain}
\bibliography{article}

\end{document}